\DeclareFontFamily{U}{mathc}{}
\DeclareFontShape{U}{mathc}{m}{it}{<->s*[1.03] mathc10}{}
\DeclareMathAlphabet{\mathcal}{U}{mathc}{m}{it}
\newcommand{\noi}{\noindent}
\newcommand{\halmos}{\rule{1ex}{1.4ex}}
\def \qed {\nopagebreak{\hspace*{\fill}$\halmos$\medskip}}
\newtheorem{theorem}{Theorem}[section]
\newtheorem{proposition}[theorem]{Proposition}
\newtheorem{corollary}[theorem]{Corollary}
\newtheorem{conjecture}[theorem]{Conjecture}
\newtheorem{lemma}[theorem]{Lemma}
\newtheorem{thmx}{Theorem}
\theoremstyle{definition}
\newtheorem{remark}[theorem]{Remark}
\newtheorem{definition}[theorem]{Definition}
\newcommand{\bd}{\begin{definition}}
\newcommand{\ed}{\end{definition}}
\newcommand{\bt}{\begin{theorem}}
\newcommand{\et}{\end{theorem}}
\newcommand{\bl}{\begin{lemma}}
\newcommand{\el}{\end{lemma}}
\newcommand{\bp}{\begin{proposition}}
\newcommand{\ep}{\end{proposition}}
\newcommand{\bcor}{\begin{corollary}}
\newcommand{\ecor}{\end{corollary}}
\newcommand{\br}{\begin{remark}\rm}
\newcommand{\er}{\end{remark}}
\newcommand{\bcon}{\begin{conjecture}}
\newcommand{\econ}{\end{conjecture}}
\renewcommand{\theequation}{\thesection .\arabic{equation}}
\newcommand{\be}{\begin{equation}}
\newcommand{\ee}{\end{equation}}
\newcommand{\Bi}{{\cal B}}
\newcommand{\Li}{{\cal L}}
\newcommand{\Oi}{{\cal O}}
\newcommand{\Ti}{{\cal T}}
\newcommand{\R}{{\mathbb R}}
\newcommand{\N}{{\mathbb N}}
\newcommand{\Z}{{\mathbb Z}}
\renewcommand{\P}{{\mathbb P}}
\newcommand{\E}{{\mathbb E}}
\newcommand\bP{\ensuremath{\boldsymbol{\mathrm{P}}}}
\newcommand\bE{\ensuremath{\boldsymbol{\mathrm{E}}}}
\newcommand\eps{\epsilon}
\renewcommand{\tilde}{\widetilde}          
\newcommand{\relmiddle}[1]{\mathrel{}\middle#1\mathrel{}}
\newcommand{\ball}[2]{B(#1;#2)}
\newcommand{\vol}[1]{{\rm vol}(#1)}
\begin{document}

\makeatletter\@addtoreset{equation}{section}
\makeatother\def\theequation{\thesection.\arabic{equation}}

\title{Geometry of the random walk range conditioned on survival among Bernoulli obstacles}
\author{Jian Ding$^{\,1}$ \and Ryoki Fukushima$^{\,2}$ \and Rongfeng Sun$^{\,3}$ \and Changji Xu$^{\,4}$}

\date{\today}

\maketitle

\begin{center}
\textit{Dedicated to the memory of Kazumasa Kuwada} 
\end{center}

\footnotetext[1]{Statistics Department, University of Pennsylvania. Email: dingjian@wharton.upenn.edu}

\footnotetext[2]{Research Institute for Mathematical Sciences, Kyoto University. Email: ryoki@kurims.kyoto-u.ac.jp}

\footnotetext[3]{Department of Mathematics, National University of Singapore. Email: matsr@nus.edu.sg}

\footnotetext[4]{Department of Statistics, University of Chicago. Email: changjixu@galton.uchicago.edu}

\begin{abstract}
We consider a discrete time simple symmetric random walk among Bernoulli obstacles on $\Z^d$, $d\geq 2$, where the walk is killed when it hits an obstacle. It is known that conditioned on survival up to time $N$, the random walk range is asymptotically contained in a ball of radius $\varrho_N=C N^{1/(d+2)}$ for any $d\geq 2$. For $d=2$, it is also known that the range asymptotically contains a ball of radius $(1-\epsilon)\varrho_N$ for any $\eps>0$, while the case $d\geq 3$ remains open. We complete the picture by showing that for any $d\geq 2$, the random walk range asymptotically contains a ball of radius $\varrho_N-\varrho_N^\epsilon$ for some $\epsilon \in (0,1)$. Furthermore, we show that its boundary is of size at most $\varrho_N^{d-1}(\log \varrho_N)^a$ for some $a>0$.
\end{abstract}

\vspace{.3cm}

\noi
{\it MSC 2000.} Primary: 60K37; Secondary: 60K35.

\noi
{\it Keywords.} Bernoulli obstacles, random walk range, Faber--Krahn inequality, annealed law.
\vspace{12pt}

\tableofcontents

\medskip

\noindent

\section{Introduction}
\label{sec:intro}
Let $S:=(S_n)_{n\geq 0}$ be a discrete time simple symmetric random walk on $\Z^d$. We will use $\bP_x$ and $\bE_x$ to denote probability and expectation for $S$ with $S_0=x\in \Z^d$, and we will omit the subscript $x$ when $x=0$. Independently for each $x\in \Z^d$, an obstacle is placed at $x$ with probability $1-p$ for some fixed $p\in (0,1)$, which generates the so-called {\em Bernoulli obstacle} configuration and plays the role of a random environment.  Probability and expectation for the obstacles will be denoted by $\P$ and $\E$, respectively. Let $\Oi$ denote the set of sites occupied by obstacles. When there is no obstacle at the site $x\in \Z^d$, we will say $x$ is open. The random walk is killed at the moment it hits an obstacle (called hard obstacles), namely, at the stopping time
\be\label{tauO}
\tau_\Oi:=\min\{n\geq 0: S_n\in \Oi\}.
\ee
More generally, we will use $\tau_A$ to denote the first hitting time of a set $A\subset \Z^d$. {We will write $\bE[f(S)\colon A]=\bE[f(S)1_A]$ and $\E[g(\Oi)\colon B]=\E[g(\Oi)1_B]$.}

We are interested in $\P\otimes \bP((S, \Oi) \in \cdot \mid \tau_\Oi >N)$, the so-called {\em annealed law} of $(S, \Oi)$ conditioned on the random walk's survival up to time $N$. For simplicity, we will denote
\be
\mu_N( (S, \Oi)\in \cdot ) := \P\otimes \bP((S, \Oi) \in \cdot \mid \tau_\Oi>N).
\label{eq:def_mu}
\ee
In particular, we are interested in the law of the random walk range
\be
S_{[0,N]} := \{S_i: 0\leq i\leq N\}
\ee
under the conditioned measure $\mu_N$. It is worth noting that the marginal law of $\mu_N$ for the random walk admits a representation in terms of the range of the random walk:
\begin{equation}
 \mu_N( S\in \cdot )
=\frac{\bE\left[p^{|S_{[0,N]}|}\colon S \in \cdot \right]}{\bE\left[p^{|S_{[0,N]}|}\right]},\label{eq:mu-Gibbs}
\end{equation}
where $|S_{[0,N]}|$ denotes the cardinality of the set $S_{[0,N]}$. 

Let us review known results on this model. The first result dates back to Donsker--Varadhan's work~\cite{DV79} which determined the leading order asymptotics of the denominator in~\eqref{eq:mu-Gibbs}, which can be regarded as the ``partition function'' of a self-attracting polymer model. The main result of~\cite{DV79} reads as
\begin{equation}
\begin{split}
 \P\otimes \bP(\tau_\Oi>N)&=\bE\left[p^{|S_{[0,N]}|}\right]\\
 &=\exp\left\{-c(d,p)N^{\frac{d}{d+2}}(1+o(1))\right\},\\
\textrm{with }c(d,p):&=\frac{d+2}{2}\left(\log(1/p)\right)^{\frac{2}{d+2}}
 \left(
\frac{2\lambda_1}{d}
\right)^{\frac{d}{d+2}},
\end{split}
\label{eq:surv}
\end{equation}
where $\lambda_1$ is the principal Dirichlet eigenvalue of $-\frac{1}{2d}\Delta$ in the ball of unit volume in $\R^d$ centered at the origin. In fact, Donsker--Varadhan studied this problem in the continuum setting first in~\cite{DV75} as the asymptotics of the moment generating function of the Wiener sausage. This corresponds to a space-time continuum analogue of a random walk among Bernoulli obstacles, known as a {\em Brownian motion among Poissonian obstacles}, where each obstacle takes a fixed shape, say a ball, and the centers of the obstacles follow a homogeneous Poisson point proces on $\R^d$. This model has been studied extensively and most of the results can be found in the celebrated monograph by Sznitman~\cite{S98}. The core of the method of Sznitman, called \emph{the method of enlargement of obstacles}, is translated to the discrete setting in~\cite{A95,BAR00} and therefore most of the results in continuum setting can be converted to the discrete setting. For this reason, in this section we will not explicate in which setting a result has been proved.

The argument of Donsker--Varadhan indicates that the dominant contribution to the partition function comes from the strategy of finding a ball of optimal radius
\begin{equation}
 \label{eq:Rn}
\varrho_N:= \Big(\frac{2{\lambda_1}}{d\log(1/p)}\Big)^{\frac{1}{d+2}} N^{\frac{1}{d+2}},
\end{equation}
which is free of obstacles and the random walk is confined in that ball up to time $N$. 
It has been proved later that this is what happens under the annealed measure in~\cite{S91} and~\cite{B94} for $d=2$ and~\cite{P99} for $d\ge 3$:
\begin{thmx}[Confinement]\label{th:confine}
For any $d\geq 2$, there exists $\eps_1\in(0,1)$ and $\mathcal{x}_N\in \Z^d$ depending only on the obstacle configuration $\Oi$, such that $\mathcal{x}_N\in \ball{0}{\varrho_N}$, the ball of radius $\varrho_N$ centered at $0$, and
\be\label{confine1}
\lim_{N\to\infty} \mu_N \big( S_{[0,N]} \subset \ball{\mathcal{x}_N}{\varrho_N+\varrho_N^{\eps_1}} \big) = 1.
\ee
The law of $\varrho_N^{-1}\mathcal{x}_N$ converges to $\phi_{\ball{0}{1}}dx$ as $N\to\infty$, where $\phi_{\ball{0}{1}}$ is the $L^1$-normalized principal Dirichlet eigenfunction of $-\frac1{2d}\Delta$ in $\ball{0}{1}$.
Furthermore, for $d=2$ and for any $\epsilon\in(0,1)$,
\be\label{confine2}
\lim_{N\to\infty} \mu_N \big( \ball{\mathcal{x}_N}{{(1-\epsilon)}\varrho_N} \subset S_{[0,N]}\big) =1. 
\ee
\end{thmx}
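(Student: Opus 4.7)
The plan is to follow the Donsker--Varadhan variational principle combined with Sznitman's enlargement-of-obstacles framework~\cite{A95,BAR00,S98}. The guiding heuristic is that for any finite ``cleared'' set $V\subset\Z^d$ containing the trajectory,
\begin{equation*}
 \P\otimes\bP(\tau_\Oi>N,\, S_{[0,N]}\subset V)\approx p^{|V|}\exp\{-N\lambda_1(V)(1+o(1))\},
\end{equation*}
so the survival probability is governed by the minimization of $F(V):=|V|\log(1/p)+N\lambda_1(V)$ over finite sets. By the Faber--Krahn inequality, $F$ is minimized, to leading order, by a ball of radius $\varrho_N$, giving both sides of~\eqref{eq:surv}. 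To extract confinement I would perform an enlargement-of-obstacles coarse-graining at a mesoscopic scale $\ell_N=\varrho_N^{o(1)}$, extract a canonical cleared component $V(\Oi)$ containing the trajectory, and \emph{define} $\mathcal{x}_N$ as a lattice approximation of the centroid of~$V(\Oi)$.

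\textbf{Proof of \eqref{confine1} and the law of $\mathcal{x}_N$.} The polynomial precision $\varrho_N^{\eps_1}$ requires a \emph{quantitative} Faber--Krahn inequality. I would show that whenever $V$ fails to be $\varrho_N^{\eps_1}$-close (in Hausdorff distance) to some translate of $\ball{0}{\varrho_N}$,
\begin{equation*}
 F(V)\geq F(\ball{0}{\varrho_N})+\eta\,\varrho_N^{\alpha}
\end{equation*}
for some $\eta,\alpha>0$, by splitting cases: if $|V|$ differs noticeably from $|\ball{0}{\varrho_N}|$ then the $\log(1/p)$ term pays, and otherwise a sharp quadratic stability bound for the principal Dirichlet eigenvalue forces $\lambda_1(V)$ above $\lambda_1(\ball{0}{\varrho_N})$ by enough to beat the combinatorial entropy from summing over $V$. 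For the law of $\varrho_N^{-1}\mathcal{x}_N$, I would plug $V\approx\ball{\mathcal{x}_N}{\varrho_N}$ into the spectral decomposition of the survival probability, obtaining a weight proportional to $\varphi(-\mathcal{x}_N)\,\|\varphi\|_1\, e^{-N\lambda_1}$, where $\varphi$ is the $L^2$-normalized principal Dirichlet eigenfunction of~$\ball{0}{\varrho_N}$; a dimensional rescaling together with the symmetry $\varphi(-y)=\varphi(y)$ identifies the limiting density with $\phi_{\ball{0}{1}}(x)\,dx$.

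\textbf{The case $d=2$; main obstacle.} For~\eqref{confine2} I would invoke recurrence of the planar walk. Conditional on confinement and $\Oi$, the walk is approximately a Doob $h$-transform of the killed walk in $\ball{\mathcal{x}_N}{\varrho_N}$, whose equilibrium measure (proportional to $\varphi^2$) is bounded away from zero on the bulk. Planar potential theory then gives a non-visit probability $\ll\varrho_N^{-d}$ at each fixed $y\in\ball{\mathcal{x}_N}{(1-\eps)\varrho_N}$, and a union bound over such $y$ completes the argument. The \emph{main obstacle} is the quantitative step in the second paragraph: the enlargement-of-obstacles procedure is inherently lossy and introduces polynomial errors, so one must calibrate $\ell_N$ carefully and apply a genuinely quadratic Faber--Krahn stability to obtain the polynomial improvement $\varrho_N^{\eps_1}$ rather than the polylogarithmic precision that a naive argument yields. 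The approach to~\eqref{confine2} breaks in $d\geq 3$ because the walk is transient---closing that gap is precisely the purpose of the present paper.
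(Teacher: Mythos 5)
Your proposal follows essentially the same route as the paper: Theorem~\ref{th:confine} is quoted from~\cite{S91,B94,P99}, and Appendix~\ref{app:Povel} sketches the discrete adaptation of Povel's argument via exactly your ingredients — the method of enlargement of obstacles to produce a clearing set, a quantitative Faber--Krahn stability estimate forcing it close to a ball $\ball{\mathcal{x}_N}{\varrho_N}$, a spectral lower bound on the survival probability, and (for~\eqref{confine2} in $d=2$) Bolthausen's recurrence-based covering argument. The one technical point you gloss over, and which the paper flags as the only step requiring new work in the discrete setting, is that the quantitative Faber--Krahn inequality is a statement about domains in $\R^d$, so one must first replace the lattice clearing set $\mathscr{U}_{\rm cl}$ by a thickened continuum set $\mathscr{U}_{\rm cl}^+\subset\R^d$ whose Euclidean volume and continuum Dirichlet eigenvalue are close to $|\mathscr{U}_{\rm cl}|$ and $\lambda^{{\rm RW},(1)}_{\mathscr{U}_{\rm cl}}$ before the stability estimate can be invoked.
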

\begin{remark}
The above formulation of confinement is in fact far more precise than what Donsker--Varadhan's argument suggests. Their argument is based on the large deviation principle for the empirical measure and thus it only indicates that the random walk spends most of the time in the ball. The interested reader can find a detailed explanation in~\cite[Section~2.5]{B02}.
\end{remark}
It remains open to show that \eqref{confine2} also holds for dimensions $d\geq 3$, that the random walk range covers a full ball with radius almost $\varrho_N$ (see~\cite[Conjecture 1.3]{B94}). Our first main result resolves this question.

\bt[Ball covering]\label{th:cover}
Let $d\geq 2$, and let $\varrho_N$ and $\mathcal{x}_N$ be as in~\eqref{eq:Rn} and Theorem~\ref{th:confine}, respectively. Then there exists $\eps_2\in (0,1)$, such that
\be\label{cover}
\lim_{N\to\infty} \mu_N 
\left( 
\ball{\mathcal{x}_N}{\varrho_N - \varrho_N^{\eps_2}} \subset S_{[0,N]}  
\right)
= 1.
\ee
\et
\begin{remark}
This theorem extends and refines~\eqref{confine2} for general $d\ge 2$. In fact, we will first prove the extension of~\eqref{confine2} to $d\ge 3$ as an intermediate step to the above refined result. The interested reader may jump to Section~\ref{S:Ticoverw} after reading Subsections~\ref{SS:switch} and~\ref{SS:coverw-outline}. 
\end{remark}
We proceed to the second main result of this paper, which is about the boundary of the range of the random walk under the annealed law. For any set $A\subset \Z^d$, we define its external 
boundary by
\begin{equation}
\partial A  := \{y\in \Z^d\setminus A\colon \Vert y-x\Vert=1 \mbox{ for some }x\in A\},
\label{eq:ex_boundary}
\end{equation}
where $\Vert \cdot\Vert$ denotes the Euclidean norm. 
Theorem~\ref{th:confine} and Theorem \ref{th:cover} together imply that, conditioned on survival up to time $N$, the rescaled boundary of the random walk range, ${\varrho_N^{-1}}\partial S_{[0,N]}$, converges in probability to a unit sphere as $N$ tends to infinity, and $\partial S_{[0,N]}$ fluctuates on a scale of at most $\varrho_N^\epsilon$ with $\eps=\max\{\epsilon_1,\epsilon_2\}\in(0,1)$. Identifying the precise scale of fluctuation is an extremely interesting, but also challenging question. The following theorem is a step in this direction, which bounds the size of $\partial S_{[0,N]}$. 

\bt[Boundary size]\label{th:boundary}
Let $d\geq 2$, and let $\varrho_N$ be defined as in \eqref{eq:Rn}. Then there exists $a>0$, such that
\be\label{boundary}
\lim_{N\to\infty} \mu_N \big(|\partial S_{[0,N]}| \leq \varrho_N^{d-1} (\log \varrho_N)^a\big) = 1.
\ee
\et

\begin{remark}
Our proofs of Theorems~\ref{th:cover} and~\ref{th:boundary} assume Theorem~\ref{th:confine} as an input. Strictly speaking, Theorem~\ref{th:confine} has only been proved in the continuum setting for $d\ge 3$ in~\cite{P99} using the method of enlargement of obstacles. We briefly explain how the argument can be adapted to the discrete setting in Appendix~\ref{app:Povel}. In fact, it is possible to prove Theorems~\ref{th:cover} and Theorem~\ref{th:confine} together. In the follow-up paper~\cite{DFSX19}, we present such an argument and further derive an extension of Theorem~\ref{th:confine} for a random walk with small bias conditioned to avoid Bernoulli obstacles. 
\end{remark}
\begin{remark}
After this paper was submitted for publication, Berestycki and Cerf announced an independent work~\cite{BC18}, where Theorem~\ref{th:cover} is proved by a different method. In addition,~\cite[Theorem~1.5]{BC18} proves a quantitative control on the random walk local time, which together with Theorem~\ref{th:cover} makes it possible to prove Theorem~\ref{th:confine} following the strategy of~\cite{B94}. For more detail, we refer the reader to the introduction of~\cite{BC18}. 
\end{remark}

Let us mention a few more related works. There is a general framework containing our setting called the parabolic Anderson model, where the obstacles are replaced by independent and identically distributed random potential $\{\omega(x)\}_{x\in\Z^d}$. One is interested in what happens under the measures
\begin{equation}
\frac{\E\otimes\bE\left[\exp\left\{\sum_{k=1}^N\omega(S_k)\right\}\colon (S,\omega)\in\cdot\right]}
{\E\otimes\bE\left[\exp\left\{\sum_{k=1}^N\omega(S_k)\right\}\right] } \textrm{ or }
\frac{\bE\left[\exp\left\{\sum_{k=1}^N\omega(S_k)\right\}\colon S\in\cdot\,\right]}
{\bE\left[\exp\left\{\sum_{k=1}^N\omega(S_k)\right\}\right]}.
\label{PAM-Gibbs}
\end{equation}
Formally our model corresponds to the case where $\omega$ takes value $0$ or $-\infty$. More generally if $\omega$ is non-positive, the above weighted measures can be interpreted as the law of random walk killed with probability $1-e^{\omega(x)}$ when it visits $x$, conditioned to survive until $N$. Thus $\omega$ plays the role of \emph{repulsive} impurities. On the other hand, positive $\omega$ corresponds to \emph{attractive} impurities. There are various localization results depending on the distribution of $\omega$. See, for example, the recent monograph by K\"onig~\cite{K16} for an up-to-date review. The first measure is known as the \emph{annealed law}, which is what we study in this paper, while the second measure in~\eqref{PAM-Gibbs} conditions on the random potential and is called the \emph{quenched law}.  

In the case of Bernoulli obstacles, it has been proved recently in~\cite{DX17,DX18} that under the quenched law, with high probability, within $o(N)$ steps, the random walk reaches a ball of volume $\Theta(\log N)$ which is almost free of obstacles, and then stays close to that ball till time $N$. However, because the size of the ball is much smaller than in the annealed setting, it is unlikely that the random walk will be confined to the ball of localization once it is reached, and it will be an interesting problem to characterize the random walk range in the quenched setting. We will address this question elsewhere. 

In what follows, we will use $c, c', C, C'$ to denote generic constants depending only on $d$ and $p$, whose values may change from line to line. For $G\subset\R^d$, we write $|G|$ for the number of points in $G\cap\Z^d$ and $\vol{G}$ for the Euclidean volume. A list of frequently used notation is compiled in Appendix~\ref{app:notation}. 

\section{Proof Outline}\label{S:prelim}
In this section, we list the main ingredients needed and outline the proof structure. An overview of how the rest of the paper is organized will be given at the end of the section. In what follows, many statements are supposed to hold with $\mu_N$-probability tending to one as $N$ tends to infinity, but we often make it implicit for brevity.
 
\subsection{Path and Environment Switching}
\label{SS:switch}
An argument that will be used repeatedly in our proof is path and environment switching. More precisely, if $A_1, A_2$ are two sets of random walk path configurations, and $E_1, E_2$ are two sets of obstacle configurations, then we can switch from $(A_1, E_1)$ to $(A_2, E_2)$ and bound
\be\label{eq:switch}
\begin{aligned}
&\mu_N((S, \Oi)\in (A_1, E_1)) \\
&\quad \leq  \frac{\P\otimes \bP(S\in A_1, \Oi\in E_1, \tau_\Oi>N)}{\P\otimes \bP(S \in A_2, \Oi \in E_2, \tau_\Oi>N)} \\
&\quad = \frac{\P(\Oi\in E_1)}{\P(\Oi\in E_2)} \cdot \frac{\E[\bP(S\in A_1,\tau_{\Oi}>N) \mid \Oi\in E_1]}{\E[\bP(S\in A_2,\tau_{\Oi}>N) \mid \Oi\in E_2]}.
\end{aligned}
\ee
The first factor determines the probability gain or cost in the environment when we switch from obstacle configurations in $E_1$ to $E_2$, while the second factor determines the gain or cost in the random walk when we switch from paths in $A_1$ to $A_2$. We will find suitable choices of $A_2$ and $E_2$ so that the gain in one factor will beat the cost in the other.

One way to bound the second factor in \eqref{eq:switch} is to find a coupling between two obstacle configurations $(\Oi_1, \Oi_2)$ with marginal distributions $\P(\cdot \mid\Oi\in E_1)$ and $\P(\cdot\mid \Oi\in E_2)$, and then bound $\bP(S\in A_1,\tau_{\Oi_1}>N)/\bP(S\in A_2,\tau_{\Oi_2}>N)$ uniformly with respect to $(\Oi_1, \Oi_2)$. This is possible because typically, $A_2$ and $E_2$ will be constructed by local modifications of paths in $A_1$ and obstacle configurations in $E_1$, respectively. 

This type of comparison argument is much more useful in the study of the conditional measure $\mu_N$ than a direct analysis, since we only have the crude leading order asymptotics on the partition function in~\eqref{eq:surv}. 

\subsection{Proof Outline for the Weaker Version of Ball Covering Theorem}
\label{SS:coverw-outline}
We first prove~\eqref{confine2} for general $d\ge 2$, which will play an important role in the proof of Theorems~\ref{th:cover} and~\ref{th:boundary}. The key step is to show that if $x\in\Oi$, then there is a positive fraction of closed sites in its neighborhood. 
\begin{lemma}
[Density of obstacles]\label{lem:odensity} For each $x\in \Z^d$, $l>0$, and $\delta>0$, let 
 \begin{equation}
 E_l^\delta(x):=\left\{x\in \Oi \textrm{ and }{\frac{|\Oi \cap \ball{x}{l}|}{|\ball{x}{l}|}} < \delta \right\}. 
\label{eq:E_l^delta}
 \end{equation}
 Then there exists $\delta>0$, such that
\begin{equation}
 \label{odensity}
 \mu_N\left(\bigcup_{x\in \ball{0}{2\varrho_N}} \, \bigcup_{{(\log N)^3}\leq l\leq {\varrho_N}} E_l^\delta(x)\right) \to 0 \textrm{ as } N\to\infty
\end{equation}
faster than any negative power of $N$.
\end{lemma}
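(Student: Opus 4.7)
The plan is to establish the pointwise bound $\mu_N(E_l^\delta(x))\leq e^{-cl^d}$ for each fixed pair $(x,l)$, and then apply a union bound: with $O(\varrho_N^{d+1})$ such pairs in the ranges of interest and $l\geq(\log N)^3$, the sum is at most $C\varrho_N^{d+1}e^{-c(\log N)^{3d}}$, which vanishes faster than any negative power of $N$.

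For a fixed pair $(x,l)$, I would first exploit the conditional Bernoulli representation of the obstacles. Given the trajectory $S=S_{[0,N]}$ and the survival event $\{\tau_\Oi>N\}$, the field $\Oi$ is deterministically absent on $S$ and i.i.d.\ Bernoulli$(1-p)$ on $\Z^d\setminus S$; in particular $E_l^\delta(x)$ forces $x\notin S$, and $|\Oi\cap B(x,l)|$ is equal in distribution to $1+\mathrm{Binom}(V'-1,1-p)$, where $V=|B(x,l)|$ and $V'=|B(x,l)\setminus S|$. Fixing $\delta<(1-p)/4$ and restricting to trajectories with $V'\geq V/2$, Chernoff's inequality gives
\[
\P\bigl(E_l^\delta(x)\,\big|\,S,\,\tau_\Oi>N\bigr)\leq e^{-cl^d}
\]
for some $c=c(\delta,p)>0$. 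Integrating against the $S$-marginal of $\mu_N$, this reduces the problem to controlling the residual event $\{x\notin S_{[0,N]},\ |B(x,l)\cap S_{[0,N]}|>V/2\}$.

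For this residual, I would apply the path/environment switching of Subsection~\ref{SS:switch}. A trajectory that covers more than half of $B(x,l)$ while leaving $x$ uncovered has an atypically dense local range; I compare such trajectories with competitors obtained by local surgery that redistributes a contiguous chunk of the visited sites out of $B(x,l)$ and into a previously obstacle-occupied region of the confinement ball $\ball{\mathcal{x}_N}{\varrho_N+\varrho_N^{\eps_1}}$ supplied by Theorem~\ref{th:confine}. The environment factor absorbs the swap of $\Omega(l^d)$ obstacle sites for open sites, while the Gibbs weight $p^{|S_{[0,N]}|}$ picks up a factor $p^{-\Omega(l^d)}$ from saving order $l^d$ visited sites, for a net gain of $e^{-cl^d}$ in the switching ratio.

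The main obstacle will be this heavy-coverage case. Calibrating the local path surgery so that its combinatorial cost is dominated by the saved Gibbs weight, while ensuring that the competitor trajectory still survives with comparable probability under the swapped environment, is the delicate technical point. By contrast, the conditional Chernoff step and the final union bound are essentially routine once the coverage event is controlled.
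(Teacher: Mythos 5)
Your opening Chernoff step is correct and clean: conditionally on the path $S$ and on $\{\tau_\Oi>N\}$, the field $\Oi$ is indeed i.i.d.\ Bernoulli$(1-p)$ off $S_{[0,N]}$, so when $V'=|\ball{x}{l}\setminus S_{[0,N]}|\geq V/2$ one gets $\P(E^\delta_l(x)\mid S,\tau_\Oi>N)\leq e^{-cl^d}$ for $\delta<(1-p)/4$. But this only disposes of a regime that was never the hard part of the lemma, and the reduction you perform actually shifts the whole burden of proof onto the residual event $\{x\notin S_{[0,N]},\, |\ball{x}{l}\cap S_{[0,N]}|>V/2\}$, which is a \emph{walk-only} event carrying no obstacle-density information. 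For it you would need to show that under $\mu_N$ it is essentially impossible for the walk to densely cover $\ball{x}{l}$ while missing $x$ --- which is precisely a quantitative local covering statement. In the paper that statement (Lemma~\ref{lem:hit-centers} and Proposition~\ref{prop:Ticoverw}) is \emph{derived from} Lemma~\ref{lem:odensity}, not the other way around, so your reduction is at serious risk of circularity: you are trying to prove the density lemma by invoking a covering property whose only known proof uses the density lemma as input.

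The surgery you sketch for the residual does not, as described, produce a gain. "Redistributing a contiguous chunk of visited sites out of $\ball{x}{l}$" into another region does not by itself reduce $|S_{[0,N]}|$: moving visits does not delete them, and a genuine reduction of the range requires the competitor walk to spend the same number of steps on a \emph{smaller} set of sites. If you try to achieve that by replacing the exploration of $\ball{x}{l}$ with time spent doubling back on already-visited sites, the map from $A_1$ to $A_2$ becomes massively non-injective: the number of distinct explorations of $\ball{x}{l}$ of duration $\tau\gtrsim l^d$ that get collapsed to a single competitor path is of order $(2d)^{\tau}$, which overwhelms the Gibbs gain $p^{\Omega(l^d)}$ (note also that the simple random walk gives every $N$-step path the same probability $(2d)^{-N}$, so there is no extra gain from the path probability itself --- only the range and the combinatorics of the map matter). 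You would need to encode the erased exploration somewhere reversible, which is exactly the kind of bookkeeping that the paper does carefully via the crossing decomposition of $A(x;l/2,l)$, the lengthening of crossings by $l^2$, and the skeletal set $\mathcal{X}^\circ_l$. None of this is present in your sketch.

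The paper's proof takes a structurally different route: it never conditions on the full trajectory, and it never splits on how much of $\ball{x}{l}$ is visited. Instead it works directly on the joint event $E^{\delta,\rho}_l(x)$, decomposes the walk into crossings of the annulus $A(x;l/2,l)$, and treats three exhaustive cases according to crossing statistics (many short crossings / long crossings with large total duration / few crossings of small total duration), applying in each case a tailored path-and-environment switch: removing the obstacles in $\ball{x}{l}$ (Cases~1 and~2, using \eqref{remove-O}, \eqref{too-short} and the skeletal estimate $\Gamma(|\mathcal{X}^\circ_l|)$), or forcing crossings into the annulus $A(x;l/4,l)$ and retyping the interior (Case~3, using \eqref{aviod-center}, \eqref{switch-end}, \eqref{switch-start}). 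The proof carefully tracks the switching multiplicity ($2^{O(K)}$ from the choice of which crossings are lengthened) and obtains only $e^{-cl^{1/2}}$ per pair $(x,l)$, which is nonetheless enough for the union bound since $l\geq(\log N)^{5/2}$ after Lemma~\ref{lem:obstaclebal}. Your conditional Chernoff would give the sharper $e^{-cl^d}$ in its regime, but that sharpening is not needed, and the residual regime is exactly where the genuine work lives; that work is missing from your proposal.
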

The proof of Lemma 2.1 will be based on path and environment switching arguments. Roughly speaking, if for some $x\in \Oi$, $\ball{x}{l}$ contains few obstacles, then: either the walk visits $\ball{x}{l}$ many times, in which case we remove all the obstacles in $\ball{x}{l}$ and we will show that the gain in the random walk survival probability beats the loss from environment switching; or the walk visits $\ball{x}{l}$ rarely, in which case we switch to typical obstacle configurations in $\ball{x}{l}$ and force the walk to avoid $\ball{x}{l}$, and we will show that the gain in environment switching beats the loss in path switching. A more precise outline and the proof will be given in Section~\ref{S:Ticoverw}. 

Lemma~\ref{lem:odensity} implies that if there is an obstacle inside the ball $\ball{\mathcal{x}_N}{(1-\epsilon)\varrho_N}$, then the confinement ball $\ball{\mathcal{x}_N}{\varrho_N}$ contains order $\varrho_N^d$ obstacles. This makes it too difficult for the random walk to survive and we can then deduce that the ball $\ball{\mathcal{x}_N}{(1-\epsilon)\varrho_N}$ is free of obstacles. More precisely, we have the following result.
\begin{proposition}
\label{prop:Ticoverw} Let $d\geq 2$. Then for any $\eps>0$, we have
\begin{equation}
\label{eq:Ticoverw}
\lim_{N\to\infty} \mu_N
\left(\ball{\mathcal{x}_N}{(1-\eps)\varrho_N} \cap \Oi=\emptyset\right) = 1.
\end{equation}
\end{proposition}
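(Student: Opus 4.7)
The plan is to combine Theorem~\ref{th:confine} (confinement) with Lemma~\ref{lem:odensity} (local obstacle density) via the path/environment switching of Subsection~\ref{SS:switch}. By Theorem~\ref{th:confine}, I may restrict to the event $\{S_{[0,N]}\subset B\}$ with $B:=\ball{\mathcal{x}_N}{\varrho_N+\varrho_N^{\eps_1}}$; by Lemma~\ref{lem:odensity}, I may further assume the density bound $|\Oi\cap\ball{x}{l}|\ge\delta|\ball{x}{l}|$ for all obstacles $x\in B$ and all $l\in[(\log N)^3,\varrho_N]$.

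Assume the complement of~\eqref{eq:Ticoverw} occurs, so some $x_\ast\in\Oi\cap\ball{\mathcal{x}_N}{(1-\eps)\varrho_N}$ exists. Choosing $l=\eps\varrho_N/3$, the ball $\ball{x_\ast}{l}$ sits inside $B$ for large $N$, and Lemma~\ref{lem:odensity} yields
\[
|\Oi\cap B|\;\ge\;|\Oi\cap\ball{x_\ast}{l}|\;\ge\;\delta\,|\ball{x_\ast}{l}|\;\ge\;c_\eps\,\varrho_N^d.
\]
The walk, which must live in $B\setminus\Oi$, is therefore trapped in a set whose volume is at most $|B|-c_\eps\varrho_N^d$, strictly below $|B|$. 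By the discrete Faber--Krahn inequality, the principal Dirichlet eigenvalue $\lambda(B\setminus\Oi)$ then exceeds $\lambda(B)$ by at least $c'_\eps/\varrho_N^2$, which via the scaling $N\asymp\varrho_N^{d+2}$ translates into
\[
\bP(\tau_\Oi>N\mid\Oi)\;\le\;\exp\!\bigl(-\lambda(B)N-c''_\eps\varrho_N^d\bigr).
\]

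To turn this into a vanishing $\mu_N$-estimate I would apply the switching bound~\eqref{eq:switch} with $E_1$ the bad event above, $E_2$ a coupled environment obtained by removing the obstacles of $E_1$ inside a carefully chosen sub-ball $B^\sharp\subset B$ around $x_\ast$ (with exterior obstacles identical), and $A_1=A_2=\{S_{[0,N]}\subset B\}$. The environment ratio is $O(\exp(\log(1/p)\,|B^\sharp|))$ and the walk ratio is bounded by the Faber--Krahn saving. The main obstacle is quantitative: both cost and saving are exponentials in quantities of order $\varrho_N^d$, so the saving must strictly dominate. This requires delicate optimization of the radius of $B^\sharp$ and of the constants $\delta$ and $\eps$, possibly relying on the multi-scale density provided by Lemma~\ref{lem:odensity} (valid down to scale $(\log N)^3$) so that $B^\sharp$ can be taken small enough that the environment cost is negligible compared to the walk saving.
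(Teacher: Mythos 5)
Your setup matches the paper's up to the point where an obstacle $x_\ast\in\ball{\mathcal{x}_N}{(1-\eps)\varrho_N}$ combined with Lemma~\ref{lem:odensity} yields $\Omega(\varrho_N^d)$ closed sites inside the confinement ball. From there the paper does not invoke Faber--Krahn or any switching at all: since closed sites cannot be visited by a surviving walk, those $c_\eps\varrho_N^d$ closed sites together with $S_{[0,N]}\subset\ball{\mathcal{x}_N}{\varrho_N+\varrho_N^{\eps_1}}$ force $|S_{[0,N]}|\le|\ball{\mathcal{x}_N}{\varrho_N+\varrho_N^{\eps_1}}|-c_\eps\varrho_N^d$, which is bounded away from $|\ball{0}{\varrho_N}|$ by a constant factor; this contradicts Lemma~\ref{lem:vol}, which says $|S_{[0,N]}|/|\ball{0}{\varrho_N}|\to 1$ under $\mu_N$. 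That one-line appeal to Lemma~\ref{lem:vol} is the step you are missing, and it completes the proof immediately.

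The Faber--Krahn-plus-switching route you sketch has a genuine gap, which you flag but do not resolve, and it is not merely ``delicate optimization.'' The radius $\varrho_N$ in~\eqref{eq:Rn} is chosen precisely to stationarize $r\mapsto r\log(1/p)+N\lambda_r$, so the \emph{first-order} change in the eigenvalue saving when one opens sites near $x_\ast$ is exactly cancelled by the first-order change in the environment probability; what remains is a second-order (quadratic-in-deficit) Faber--Krahn gain $\asymp k^2\varrho_N^{-d}$ for a volume deficit $k$, while the combinatorial entropy of placing $k$ obstacles in $B^\sharp$ costs $\asymp k\log(|B^\sharp|/k)$. For $k\asymp\delta\eps^d\varrho_N^d$ with the small $\delta$ supplied by Lemma~\ref{lem:odensity}, the linear entropy term beats the quadratic gain, so the inequality you need goes the wrong way. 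Shrinking $B^\sharp$ does not help: gain and cost scale together, and at small scales both become negligible rather than the comparison improving. The paper does carry out a Faber--Krahn argument of the kind you have in mind, but only for the refined Theorem~\ref{th:Ticover} (see Lemma~\ref{lem:vol-control}), and there it is made to close only after Proposition~\ref{prop:Tiboundaryw} has cut the entropy of admissible shapes of $\Ti$ down from $\exp\{c\varrho_N^d\}$ to $\exp\{c\varrho_N^{d-1+2b}\}$, and after Lemma~\ref{lem:surv-asymp} sharpens the lower bound on the partition function beyond the crude~\eqref{eq:surv}. Without those additional inputs your switching step does not close; for this weaker proposition the intended and far simpler route is through Lemma~\ref{lem:vol}.
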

Once we have this proposition, the covering property~\eqref{confine2} readily follows. Indeed, if the random walk avoids a site $x\in\ball{\mathcal{x}_N}{(1-\epsilon)\varrho_N}$ with positive probability uniformly in $N$, then we can close that site at little cost, which contradicts Proposition~\ref{prop:Ticoverw}. 


\subsection{Reduction to the Cluster of ``Truly''-Open Sites}
The key idea in our proof of Theorems~\ref{th:cover} and~\ref{th:boundary} is to approximate the range of the random walk, $S_{[0,N]}$, by a set of {\em ``truly''-open sites} $\Ti$ that depends only on the obstacle configuration $\Oi$. Unlike sites in $S_{[0,N]}$, we can easily control the environment cost of creating a ``truly''-open site, which facilitates the application of the switching argument in \eqref{eq:switch}.
\begin{definition}[``Truly''-open sites]\label{D:trulyopen}
Given an obstacle configuration $\Oi$ and $N\in\N$, a site $x\in \Z^d$ is called ``truly''-open if
\begin{equation}
 \label{eq:topen}
\bP_x\left(\tau_\Oi > (\log N)^5\right) \geq \exp\left\{-(\log N)^2\right\}.
\end{equation}
If the origin is ``truly''-open, then we let $\Ti$ denote the connected component of ``truly''-open sites inside $\ball{\mathcal{x}_N}{\varrho_N+\varrho_N^{\epsilon_1}}$ containing the origin, where $\epsilon_1$ is the constant appearing in Theorem~\ref{th:confine}. Otherwise let $\Ti=\emptyset$.
\end{definition}
\begin{remark}
\label{rem:topen}
A ``truly''-open site is a site whose surrounding environment is atypically favorable for the random walk survival. If the environment is typical, then the probability in~\eqref{eq:topen} would decay like $\exp\{-c(\log N)^{5+o(1)}\}$ (cf.~\cite[Theorem 5.1 on p.~196]{S98}). 
Note that whether $x\in\Z^d$ is ``truly''-open or not depends only on the obstacle configuration in the $l^1$-ball of radius $(\log N)^5$ centered at $x$. 
\end{remark}
The following two lemmas justifies the approximation of $\partial S_{[0,N]}$ by the boundary of ``truly''-open sites $\partial\Ti$.
\bl\label{lem:TinS}
Let $d\geq 2$. Then
\begin{equation}
 \lim_{N\to\infty} \mu_N \left(S_{[0,N]} \supset \left\{x\in\Ti\colon {\rm dist}(x,\partial\Ti)\ge{(\log N)^3}\right\}\right) = 1
\label{eq:TinS}
\end{equation}
and
\begin{equation}
\lim_{N\to\infty} \mu_N \left(\Ti \subset \left\{x\in\Z^d\colon {\rm dist}(x,S_{[0,N]})\le(\log N)^5\right\}\right) = 1.
\label{eq:Ti2S}
\end{equation}
\el
\bl\label{lem:SinTi}
Let $d\geq 2$. Then
\be\label{eq:SinTi}
\lim_{N\to\infty} \mu_N \big(S_{[0,N]} \subset \Ti) = 1.
\ee
\el
Indeed,~\eqref{eq:TinS} and~\eqref{eq:SinTi} imply that
\begin{equation}
 \mu_N\left(\partial S_{[0,N]}\subset \bigcup_{x\in\partial\Ti}\ball{x}{(\log N)^5}\right)\to 1
\end{equation}
and therefore, Theorems~\ref{th:cover} and~\ref{th:boundary} follow immediately from their analogues for $\Ti$.
\bt\label{th:Ticover}
Let $d\geq 2$. Then there exists $\eps_2\in (0,1)$ such that
\be\label{eq:Ticover}
\lim_{N\to\infty} \mu_N 
\left({\ball{\mathcal{x}_N}{\varrho_N - \varrho_N^{\eps_2}} \subset \Ti}
\right)
= 1.
\ee
\et
\bt\label{th:Tiboundary}
Let $d\geq 2$. Then there exists $a>0$ such that
\be\label{eq:Tiboundary}
\lim_{N\to\infty} \mu_N \big(|\partial \Ti| \leq \varrho_N^{d-1} (\log \varrho_N)^a\big) = 1.
\ee
\et

\subsection{Proof Outline for the Cluster of ``Truly''-Open Sites} \label{SS:outline}
In this subsection, we provide an outline for the proof of Theorems~\ref{th:Ticover} and~\ref{th:Tiboundary}, assuming Lemmas~\ref{lem:TinS} and~\ref{lem:SinTi}.

Note that the random walk is confined in $\Ti$ by Lemma~\ref{lem:SinTi}. 
The geometry of $\Ti$ under $\mu_N$ is then determined by an entropy-energy balance, namely, the number of possible configurations for $\partial \Ti$, vs the probability that the random walk stays confined in $\Ti$ up to time $N$ (equivalently, the principal Dirichlet eigenvalue for the discrete Laplacian on $\Ti$).
%
%
By definition, $\Ti$ is contained in the confinement ball $\ball{\mathcal{x}_N}{\varrho_N+\varrho_N^{\epsilon_1}}$ in Theorem~\ref{th:confine}. On the other hand, Proposition \ref{prop:Ticoverw} implies that for any $\epsilon>0$, $\ball{\mathcal{x}_N}{(1-\eps)\varrho_N}$ is a ball of ``truly''-open sites. 
Therefore, it follows that
\begin{equation}
\partial\Ti \subset A(\mathcal{x}_N; (1-\eps) \varrho_N, \varrho_N+\varrho_N^{\epsilon_1}) := \ball{\mathcal{x}_N}{\varrho_N+\varrho_N^{\epsilon_1}} \setminus \overline{\ball{\mathcal{x}_N}{(1-\eps)\varrho_N}}.
\label{eq:6TinA}
\end{equation}
%
We bound the entropy for $\partial \Ti$ by proving the following weaker version of Theorem~\ref{th:Tiboundary}:
\begin{proposition}
\label{prop:Tiboundaryw}
Let $d\geq 2$. Then for any $b>0$,
\begin{equation}
 \label{eq:Tiboundaryw}
 \lim_{N\to\infty} \mu_N \big(|\partial \Ti| \leq \varrho_N^{d-1+b}\big) = 1.
\end{equation}
\end{proposition}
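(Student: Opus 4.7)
The plan is a first-moment plus entropy argument. By Lemma~\ref{lem:SinTi} we may work on the event $\{S_{[0,N]}\subset \Ti\}$ at the cost of $o(1)$, and by the definition of ``truly''-open sites, every realization $\Ti=T$ forces $T\subset \Oi^c$, so $\P(\Ti=T)\le p^{|T|}$. Combining these with the Donsker--Varadhan lower bound \eqref{eq:surv} on the denominator of $\mu_N$ yields, for every connected $T\ni 0$ contained in $\ball{\mathcal{x}_N}{\varrho_N+\varrho_N^{\epsilon_1}}$,
\begin{equation*}
\mu_N(\Ti=T)\ \le\ C\exp\{-\Delta(T)\} + o(1),\qquad
\Delta(T):=|T|\log(1/p)+N\lambda(T)-c(d,p)N^{d/(d+2)}(1+o(1)),
\end{equation*}
where $\lambda(T)$ is the principal Dirichlet eigenvalue of $-\tfrac1{2d}\Delta$ on $T$, and $\Delta(T)\ge 0$ by the Faber--Krahn inequality together with the definition of $c(d,p)$. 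Applying Proposition~\ref{prop:Ticoverw} we may additionally impose $T\supset \ball{\mathcal{x}_N}{(1-\epsilon)\varrho_N}$ for any fixed small $\epsilon>0$, which pins $|T|$ within $(1\pm O(\epsilon))\omega_d\varrho_N^d$ and localizes the volume contribution to $\Delta(T)$ near the Donsker--Varadhan optimum.

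Next I would use a Peierls-type enumeration: the number of connected subsets of $\Z^d$ containing $0$ with external boundary of size $k$ is at most $e^{C_d k}$ for a dimensional constant $C_d$. Summing over $T$ with $|\partial T|\ge k=\varrho_N^{d-1+b}$ gives
\begin{equation*}
\mu_N(|\partial\Ti|\ge k)\ \le\ \sum_{j\ge k} e^{C_d j}\sup_{T:|\partial T|=j} e^{-\Delta(T)} + o(1),
\end{equation*}
so Proposition~\ref{prop:Tiboundaryw} reduces to proving a coercive estimate of the form $\Delta(T)\ge 2C_d|\partial T|$ whenever $|\partial T|\ge\varrho_N^{d-1+b}$. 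Since the volume contribution is already pinned, this coercive surplus must come from the eigenvalue excess $N(\lambda(T)-\lambda(\ball{\mathcal{x}_N}{\varrho_N}))$, which needs to grow at least linearly in $|\partial T|$.

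The main obstacle is extracting this spectral excess from the boundary size, because the plain Faber--Krahn inequality depends only on the volume $|T|$ and is insensitive to the shape of $\partial T$. I would attempt a test-function / fjord argument: for each ``bump'' of $\partial T$ poking into the interior, the Dirichlet eigenfunction is forced to vanish along a cluster of boundary sites, and a localized Rayleigh quotient accounting should yield a per-site gain of order $\varrho_N^{-(d+1)}$ in $\lambda(T)-\lambda(\ball{\mathcal{x}_N}{\varrho_N})$. Multiplied by $N\asymp\varrho_N^{d+2}$, this produces an $\Omega(|\partial T|)$ surplus and, after tracking constants, beats the lattice-animal entropy $C_d|\partial T|$.

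An alternative, perhaps cleaner route is an iterated environment-switching argument in the spirit of Subsection~\ref{SS:switch}: given a configuration with $|\partial\Ti|\ge k$, identify a site $y\in\partial\Ti$ whose obstacle can be removed so as to strictly decrease $|\partial\Ti|$, apply \eqref{eq:switch} with $A_2,E_2$ the modified walk/obstacle configurations, and bound the single-step probability ratio by a constant strictly less than $1$. Iterating this $\sim k$ times produces the required coercive gain $e^{-ck}$ without invoking a spectral stability result; the bookkeeping challenge is then to make sure each reduction step is available and the cumulative ratio dominates the lattice-animal entropy.
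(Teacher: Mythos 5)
The paper's proof of Proposition~\ref{prop:Tiboundaryw} is a Green-function / expected-visit-count argument, not an entropy--energy argument. It combines Lemma~\ref{lem:Gubd}, which bounds the total expected number of visits (before the walk is killed) to $(\log N)^6$-neighborhoods of $\partial\Ti$ by $(\log N)^{c_6}$, with Lemma~\ref{lem:Glbd}, which furnishes a uniform lower bound $G_{\Oi}(\mathcal{x}_N,x)\ge \varrho_N^{1-d-c_4\epsilon}$ for all $x\in\partial\Ti$. Dividing yields $|\partial\Ti|\le\varrho_N^{d-1+c_4\epsilon}(\log N)^{c_6}$ and sending $\epsilon\to 0$ gives the claim. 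Your plan is a genuinely different route.

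You correctly identify the obstruction to the entropy--energy route: the Faber--Krahn inequality controls $\Delta(T)$ only through $|T|$ and is blind to the roughness of $\partial T$, so the plain estimate gives $\Delta(T)\ge -O(\varrho_N^{d-1})$ (after using the sharper lower bound Lemma~\ref{lem:surv-asymp} on the partition function; the crude~\eqref{eq:surv} only controls the exponent up to $o(N^{d/(d+2)})$, which is far too weak). But neither of your two proposed resolutions closes the gap. For the ``fjord'' argument: the asserted per-site eigenvalue gain $\varrho_N^{-(d+1)}$ is unjustified and is actually wrong for sites near the confinement sphere, where the eigenfunction scales like $\phi(y)\sim\varrho_N^{-d/2-1}\,{\rm dist}(y,\partial B)$, giving a per-site eigenvalue shift $\sim\phi(y)^2\sim\varrho_N^{-d-2}$ for ${\rm dist}(y,\partial B)=O(1)$. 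Multiplied by $N\asymp\varrho_N^{d+2}$ this is only $O(1)$ per site, which is the same order as the lattice-animal entropy $C_d$ per site, and there is no reason the constant beats $C_d$. To save the argument one would have to show that any $T$ with $|\partial T|\ge\varrho_N^{d-1+b}$ must have a substantial fraction of its boundary at depth $\gg 1$ inside $\ball{\mathcal{x}_N}{\varrho_N}$ (so that the quadratic depth-dependence of $\phi(y)^2$ generates a superlinear surplus); this packing statement is plausible but is a genuine new ingredient that the proposal neither states nor proves. For the iterated-switching alternative: the sites in $\partial\Ti$ are not obstacle sites --- they are merely not ``truly''-open and may well be open --- so ``remove the obstacle at $y$'' is not the available move. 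The paper's actual switching (Subsection~\ref{SS:Glbd-proof}) operates at the level of the excursions from $\partial\ball{\mathcal{x}_N}{\varrho_N-l}$ and the Green function, with a Harnack-type iteration to control the cost, and it needs Theorem~\ref{th:confine} and Proposition~\ref{prop:Ticoverw} as inputs to even set up the decomposition; the single-step ratio bound you suggest would need to be $\ll 1/|\partial\Ti|$ to survive the enumeration of which site to switch, not merely ``a constant strictly less than one.'' So the proposal identifies the right difficulty but does not resolve it, and the paper's proof goes by an entirely different mechanism.
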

We prove Proposition~\ref{prop:Tiboundaryw} by considering the expected number of visits to\\ $\bigcup_{x\in \partial\Ti}\ball{x}{(\log N)^6}$ by a random walk killed upon hitting $\Oi$.
It suffices to prove that
\begin{enumerate}
\item the expectation of the total number of visits to $\bigcup_{x\in \partial\Ti}\ball{x}{(\log N)^6}$ is bounded from above by $(\log N)^c$ for some $c>0$;
\item uniformly in $x\in\partial \Ti$, the expected number of visits to $\ball{x}{(\log N)^6}$ is bounded from below by $N^{1-d+b}$ for any $b>0$.
\end{enumerate}
Here we consider visits to $(\log N)^6$ neighborhood of $x\in \partial \Ti$ because if the walk does not visit $\ball{x}{(\log N)^6}$, then we can switch a ``truly''-open site next to $x$ to be not ``truly''-open by only modifying the obstacle configuration inside $\ball{x}{(\log N)^6}$. The first item above follows from the fact that the random walk will typically be killed soon after visiting $x\in\partial\Ti$. The second item is proved by the path and environment switching arguments. Roughly speaking, a site $x\in\partial\Ti$ with atypically low expected number of visits is \emph{costly} for the random walk to visit. Thanks to the confinement of $\partial \Ti$ to the annulus in~\eqref{eq:6TinA}, $\ball{x}{(\log N)^6}$ can be visited only by an excursion away from $\ball{\mathcal{x}_N}{(1-\epsilon)\varrho_N}$ and we can gain a lot in the random walk probability by switching such excursions to those that stay inside $\ball{\mathcal{x}_N}{(1-\epsilon)\varrho_N}$. This implies that the random walk does not visit the $(\log N)^6$ neighborhood of $x$. We can then gain further in the environment probability by switching a ``truly''-open site next to $x$ to be not ``truly''-open, which shows that such $x\in \partial \Ti$ does not exist. 


Proposition
~\ref{prop:Tiboundaryw} provides a good enough bound on the entropy for $\partial\Ti$ to allow us to strengthen the bound on the fluctuation of $\partial \Ti$ in~\eqref{eq:6TinA} to  Theorem~\ref{th:Ticover}. More precisely, 
if $\Ti^c$ contains a point in $\ball{\mathcal{x}_N}{\varrho_N-\varrho_N^{\eps_2}}$, then Lemma~\ref{lem:odensity} implies that $\Ti$ differs significantly in volume from the confinement ball $\ball{\mathcal{x}_N}{\varrho_N+\varrho_N^{\epsilon_1}}$. 
Recalling that $\Ti\subset\ball{\mathcal{x}_N}{\varrho_N+\varrho_N^{\epsilon_1}}$ by definition, we can then use the Faber--Krahn inequality to show that the principal Dirichlet eigenvalue on $\Ti$ 
deviates so much from that of $\ball{\mathcal{x}_N}{\varrho_N+\varrho_N^{\epsilon_1}}$ that the loss in survival probability dominates the entropy for $\partial \Ti$. 

Using Theorem~\ref{th:Ticover} on the fluctuation of $\partial \Ti$ as an input in place of the weaker Proposition~\ref{prop:Ticoverw}, we then repeat the proof of Proposition~\ref{prop:Tiboundaryw}. Now that the excursions of random walk visiting $\partial\Ti$ are smaller, the switching argument becomes more efficient and we obtain
\begin{enumerate}
 \item[$2'$.] uniformly in $x\in\partial \Ti$, the expected number of visits to $\ball{x}{(\log N)^6}$ is bounded from below by $N^{1-d}(\log N)^{-c'}$ for some $c'>0$. 
\end{enumerate} 
Combining this with the first item above, we obtain Theorem~\ref{th:Tiboundary}.\\
\smallskip

\noindent
{\bf Organization of the paper.} 
The rest of the paper is organized as follows. Section~\ref{S:Ticoverw} is devoted to the proofs of Proposition~\ref{prop:Ticoverw} and~\eqref{confine2} for general $d\ge 2$. 
In Section~\ref{S:Topen}, we first prove Lemma~\ref{lem:TinS} with an additional property for ``truly''-open sites, and then prove Lemma~\ref{lem:SinTi} and derive Proposition~\ref{prop:Tiboundaryw} from Proposition~\ref{prop:Ticoverw}. We will in fact formulate a lemma (Lemma~\ref{lem:Glbd}) which unifies the derivation of Proposition~\ref{prop:Tiboundaryw} from Proposition~\ref{prop:Ticoverw} and the derivation of Theorem~\ref{th:Tiboundary} from Theorem~\ref{th:Ticover}. Lastly, in Section~\ref{S:Thms}, we conclude with the proof of Theorem~\ref{th:Ticover}. In Appendix~\ref{app:eigen}, we prove some technical estimates on the Dirichlet eigenvalues and eigenfunctions for the generator of the random walk, as well as a lower bound on the survival probability slightly better than in~\eqref{eq:surv}. In Appendix~\ref{app:Povel}, we briefly explain how to prove Theorem~\ref{th:confine} by adapting the argument in~\cite{P99}. Appendix~\ref{app:notation} provides an index of notation.

\section{Proof of the Weaker Version of Ball Covering Theorem
}\label{S:Ticoverw}
\subsection{Proof of Proposition~\ref{prop:Ticoverw} and the Extension of~\eqref{confine2}}
In this subsection, we prove Proposition~\ref{prop:Ticoverw} and then~\eqref{confine2} for general $d\ge 2$, assuming Lemma~\ref{lem:odensity}, which says that under the conditioned law $\mu_N(\cdot)$, obstacles cannot be too isolated.
We need another lemma which states that the size of the random walk range $S_{[0,N]}$ satisfies a weak law of large numbers under $\mu_N$:
\bl[Size of random walk range]\label{lem:vol} For all $\eps>0$, we have
\be
{\mu_N \left(\left|\frac{|S_{[0,N]}|}{|\ball{0}{\varrho_N}|}-1\right|>\eps\right) \to 0 \textrm{ as } N\to\infty}
\label{eq:range-vol}
\ee
{faster than any negative power of $N$.}
\el
\noindent{\bf Proof of Lemma \ref{lem:vol}.}
For a Brownian motion among Poissonian obstacles, the corresponding result is proved in~\cite{F08}. It is straightforward to adapt the argument there to the current discrete setting. Indeed, the key point of the argument therein was the following formula for the generating function 
\begin{equation}
\int \exp\left\{\xi|S_{[0,N]}|\right\}d\mu_N 
=\frac{\bE\left[\exp\{(\xi-\log(1/p)) |S_{[0,N]}|\} \right]}{\bE\left[\exp\{-\log(1/p) |S_{[0,N]}|\} \right]}.
\end{equation}
One can use~\eqref{eq:surv} to derive the asymptotics of this for $|\xi|<\log (1/p)$ and then~\eqref{eq:range-vol} follows by standard exponential Chebyshev bounds. \qed
\medskip


\noindent
{\bf Proof of Proposition~\ref{prop:Ticoverw}.} 
Thanks to Theorem~\ref{th:confine} and Lemma~\ref{lem:odensity}, we may assume that $S_{[0,N]} \subset \ball{\mathcal{x}_N}{\varrho_N+\varrho_N^{\eps_1}}$, and for any $x\in\Oi\cap\ball{0}{2\varrho_N}$, 
\begin{equation}
 \frac{|\Oi\cap\ball{x}{\epsilon\varrho_N}|}{|\ball{x}{\epsilon\varrho_N}|}\ge \delta.\label{eq:odensity2}
\end{equation}
Suppose that there is a point $x\in\ball{\mathcal{x}_N}{(1-\epsilon)\varrho_N}\cap \Oi$. Then by~\eqref{eq:odensity2}, at least $\delta$ fraction of sites in $\ball{x}{\epsilon\varrho_N}$ are closed and hence are outside $S_{[0,N]}$. Combined with $S_{[0,N]} \subset \ball{\mathcal{x}_N}{\varrho_N+\varrho_N^{\eps_1}}$, this implies that the ratio $|S_{[0,\varrho_N]}|/|\ball{0}{\varrho_N}|$ stays strictly less than one, which has $\mu_N$-probability tending to zero as $N\to\infty$ by Lemma~\ref{lem:vol}. \qed 

\medskip

\noindent
{\bf Proof of \eqref{confine2} for general $d\ge 2$.} 
We derive~\eqref{confine2} as a consequence of the following lemma, which asserts that the random walk visits all $x$ in the confinement ball such that $\ball{x}{(\log N)^3}$ is free of obstacles. 
\begin{lemma}
\label{lem:hit-centers}
\begin{equation}
 \lim_{N\to\infty}\mu_N\left(
\bigcup_{x\in\ball{0}{2\varrho_N}}
\left\{\tau_x
>N, \Oi\cap\ball{x}{{(\log N)^3}}=\emptyset\right\}\right)=0.
\end{equation}
\end{lemma}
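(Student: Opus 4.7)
The plan is to run an environment switching argument in the spirit of Subsection~\ref{SS:switch} and reduce the claim to Lemma~\ref{lem:odensity}. The key observation is: if $\tau_x>N$ and $\Oi\cap \ball{x}{(\log N)^3}=\emptyset$, then adding a single obstacle at $x$ costs nothing for the walk (which already avoids $x$), while the resulting environment contains an obstacle at $x$ isolated inside $\ball{x}{(\log N)^3}$---exactly the sort of configuration suppressed by Lemma~\ref{lem:odensity}.

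Concretely, I would fix $x\in\ball{0}{2\varrho_N}$, denote by $A_x$ the event inside the union, and perform a change of variables $(S,\Oi)\mapsto (S,\Oi\cup\{x\})$ in the numerator of $\mu_N(A_x)$. This map is injective on $A_x$; moreover $\P(\Oi\cup\{x\})=\tfrac{1-p}{p}\P(\Oi)$, and on $A_x$ one has $\tau_{\Oi\cup\{x\}}=\min(\tau_\Oi,\tau_x)>N$ if and only if $\tau_\Oi>N$. These ingredients yield the exact identity
\be
\mu_N(A_x)=\frac{p}{1-p}\,\mu_N\big(x\in\Oi,\;\Oi\cap\ball{x}{(\log N)^3}=\{x\}\big).
\ee
For $N$ large the right-hand event sits inside $E^\delta_{(\log N)^3}(x)$, since $|\Oi\cap\ball{x}{(\log N)^3}|=1<\delta|\ball{x}{(\log N)^3}|$.

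Finally, I would sum this bound over $x$ and apply Lemma~\ref{lem:odensity}. Using the trivial deterministic estimate $|\{x\in\ball{0}{2\varrho_N}:E^\delta_{(\log N)^3}(x)\text{ holds}\}|\le|\ball{0}{2\varrho_N}|=O(\varrho_N^d)$, I obtain
\be
\mu_N\Big(\bigcup_{x\in\ball{0}{2\varrho_N}}A_x\Big)\le \frac{p}{1-p}\,O(\varrho_N^d)\cdot \mu_N\Big(\bigcup_{x\in\ball{0}{2\varrho_N}}E^\delta_{(\log N)^3}(x)\Big),
\ee
which tends to $0$ by Lemma~\ref{lem:odensity}. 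The only delicate point I anticipate is this polynomial-versus-superpolynomial comparison: it is precisely why Lemma~\ref{lem:odensity} is formulated with decay faster than any negative power of $N$. The switching step itself produces an exact equality, so no further slack needs to be tracked.
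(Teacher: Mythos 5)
Your proposal is correct and takes essentially the same route as the paper: a union bound over $x$, the observation that $\bP(\tau_x\wedge\tau_\Oi>N)$ does not depend on whether $x\in\Oi$, the resulting exact identity $\mu_N(A_x)=\frac{p}{1-p}\mu_N(\Oi\cap\ball{x}{(\log N)^3}=\{x\})$, the inclusion into $E^\delta_{(\log N)^3}(x)$, and then the superpolynomial decay from Lemma~\ref{lem:odensity} to absorb the $O(\varrho_N^d)$ union-bound factor. Nothing further is needed.
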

Since we know from Proposition~\ref{prop:Ticoverw} that for any $\epsilon>0$, the ball $\ball{\mathcal{x}_N}{(1-\epsilon/2)\varrho_N}$ is free of obstacles with $\mu_N$-probability tending to one, Lemma~\ref{lem:hit-centers} implies that
\begin{equation}
\lim\mu_N\left(\ball{\mathcal{x}_N}{(1-\epsilon)\varrho_N}\subset S_{[0,N]}\right)=1.
\label{in-and-out}
\end{equation}
\qed

\medskip
\noindent\textbf{Proof of Lemma~\ref{lem:hit-centers}.}
By the union bound,
\begin{equation}
\begin{split}
&\mu_N \left(
{\bigcup_{x\in\ball{0}{2\varrho_N}}}\{\tau_x
>N,\Oi\cap\ball{x}{{(\log N)^3}}=\emptyset\} \right)\\
&\quad\le 
{\sum_{x\in\ball{0}{2\varrho_N}}}\mu_N\left(\tau_x
>N, \Oi\cap\ball{x}{{(\log N)^3}}=\emptyset\right).
\end{split}
\end{equation}
Thus it suffices show that for any $x\in{\ball{0}{2\varrho_N}}$,
\begin{equation}
 \mu_N\left(\tau_x>N,{\Oi\cap\ball{x}{{(\log N)^3}}=\emptyset}\right)
\to 0\textrm{ as }N\to \infty
\label{avoid-x}
\end{equation}
faster than any negative power of $N$. 

To this end, observe that the probability
$\bP(\tau_x\wedge\tau_\Oi>N)$ is independent of whether $x\in\Oi$ or not. Therefore, we have
\begin{equation}
\begin{split}
 &\mu_N\left(\tau_x>N, \Oi\cap\ball{x}{{(\log N)^3}}=\emptyset\right)\\
 &\quad=\frac{p}{1-p}\mu_N\left(\Oi\cap\ball{x}{{(\log N)^3}}=\{x\}\right),
\end{split}
\end{equation}
where we have switched the environment at $x$ conditionally on the random walk event $\{\tau_x>N\}$. Lemma~\ref{lem:odensity} then shows that the right-hand side decays faster than any negative power of $N$.
\qed

\medskip

The rest of this section is devoted to the proof of Lemma \ref{lem:odensity}.

\subsection{Proof Outline for Lemma \ref{lem:odensity}}\label{SS:odenoutline}

The proof of Lemma \ref{lem:odensity} is based on the environment and path switching argument in \eqref{eq:switch}. The rough (and not fully accurate) heuristic is as follows. Suppose that the event $E^\delta_l(x)$ occurs, i.e., $x\in \Oi$ and $|\Oi\cap \ball{x}{l}|/|\ball{x}{l} <\delta$ for some $\delta$ much smaller than the typical obstacle density $1-p$. Either the random walk spends a lot of time in $\ball{x}{l}$, in which case we remove all obstacles in $\ball{x}{l}$, and we expect the gain in the random walk survival probability to beat the environment cost of removing the obstacles; or the random walk spends little time in $\ball{x}{l}$, in which case we force the random walk not to visit $\ball{x}{l/4}$, change to typical obstacle configurations in $\ball{x}{l/4}$ and remove all obstacles in
$\ball{x}{l}\setminus \ball{x}{l/4}$, and we expect the probability gain in the environment to beat the cost in changing the random walk.
To make this heuristic rigorous, we need the following ingredients.
\smallskip

\noindent
{\bf Path decomposition.} First, we decompose the random walk path $(S_n)_{0\leq n\leq N}$ into successive crossings between the inner and outer shells of the annulus
\be
A(x; l/2, l) := \ball{x}{l} \setminus \overline{\ball{x}{l/2}}.
\label{eq:PathDecompB}
\ee
More precisely, we define {$\overline{\ball{x}{l/2}}:=\ball{x}{l/2}\cup\partial\ball{x}{l/2}$ and} stopping times
\begin{align}
\sigma_1 & := \min\{n\geq 0\colon S_n \in {\overline{\ball{x}{l/2}}}\}{\wedge N},
\end{align}
and for $k\in\N$, 
\begin{align}
\tau_k & := \min\{n> \sigma_k \colon S_n \in B^c(x,l)\}{\wedge N}; \\
\sigma_{k+1} & := \min\{n> \tau_k \colon S_n \in {\overline{\ball{x}{l/2}}}\}{\wedge N}.\label{eq:PathDecompE}
\end{align}
We will perform path switching on the crossings from ${\overline{\ball{x}{l/2}}}$ to $\ball{x}{l}^c$, and perform environment switching in the ball $\ball{x}{l}$.
\medskip

\noindent{\bf Skeletal approximation of $\Oi\cap \ball{x}{l}$.}
When the random walk spends a lot of time in $\ball{x}{l}$, we will remove all the obstacles in $\ball{x}{l}$. We need to estimate the gain in the random walk survival probability as a function of $\Oi\cap \ball{x}{l}$. This is not a very simple task and one of the difficulties is that the contribution from each obstacle is highly non-uniform, depending on others. Indeed, if an obstacle is well surrounded by others, then we gain little in the survival probability by removing it. In order to make the gain from an obstacle independent from others, we will only count the gain from a skeletal set ${\cal X}_l(x)\subset \Oi$ with properties 
\begin{align}
&\textrm{$x\in {\cal X}_l(x)$ and all sites in ${\cal X}_l(x)$ are at least distance $l^{1/2d}$ away}\notag \\
&\textrm{ from each other;}\label{X-separation}\\
&\textrm{each $y\in \Oi\cap \ball{x}{l}$ is within distance $l^{1/2d}$ of some site in ${\cal X}_l(x)$.}\label{X-covering}
\end{align}
Such a set can be constructed iteratively. First include $x\in {\cal X}_l(x)$ and remove all obstacles in $\ball{x}{l^{1/2d}}$.
Next pick any one of the remaining obstacles at $y\in \ball{x}{l}$ that is closest to $x$ and add it to ${\cal X}_l(x)$, and remove all obstacles in $\ball{y}{l^{1/2d}}$. Repeat this procedure until no obstacles remain in $\ball{x}{l}$.
Another difficulty is that {we gain little in the random walk survival probability by removing obstacles in $\ball{x}{l}$ near $\partial\ball{x}{l}$ since we will only count the gain from the crossings $\{S_{[\sigma_k,\tau_k]}\}_{k\in\N}$, that typically spend little time near $\partial\ball{x}{l}$.} Therefore we focus on the obstacles deeply inside $\ball{x}{l}$ by setting
\be
{\cal X}_l^\circ:={\cal X}_l^\circ(x):= {\cal X}_l(x) \cap \ball{x}{l/2}.
\label{def:calXo}
\ee

\smallskip
\noindent{\bf Random walk estimates.} 
For $D\subset \Z^d$, $u\in D$ and $v\in D\cup\partial D$, we denote 
\be
p^D_n(u,v) := \bP_u(S_n=v, S_{[1, n-1]}\subset D).
\label{eq:trans_prob}
\ee
This is nothing but the discrete {space-time} Dirichlet heat kernel on $D$ if $v\in D$ and, while it is not for $v\in\partial D$, it always satisfies the discrete heat equation in $(n,u)$. 

{\begin{remark}
\label{rem:parity}
Since the symmetric simple random walk has period 2, we have $p^D_n(u,v)=0$ when $n+|u-v|_1$ is an odd number. In what follows, we adopt a convention that $p^D_n(u, v)$ is understood to be $p^D_{n+1}(u,v)$ if $n+|u-v|_1$ is odd. 
\end{remark}
}
Now we are ready to state the gain in the random walk survival probability when we remove obstacles: for $c_0, c_1>0$ to be determined later in Lemma~\ref{lem:rwest}, uniformly in $m \ge (c_0l)^2$, {$u \in \ball{x}{l/2}$ and $v \in \overline{\ball{x}{l}}$}, 
\begin{align}
p^{\ball{x}{l} \setminus {\Oi}}_m(u, v)  
\leq e^{-c_1 (\lfloor m /(c_0l)^2\rfloor{-1}) \Gamma(|{\cal X}_l^\circ|)}p_m^{\ball{x}{l}}(u,v), 
\tag{RW1}
\label{remove-O}
\end{align}
where for $k\in\N$,
\be
\begin{split}
\Gamma(k)
 :=
 \begin{cases}
  \left(\log \frac{(c_0l)^{3/2}}{2k}\right)^{-1}\vee 0, & d = 2,\\[10pt]
 \frac{l^{2-d}k}{1+ l^{{(2-d)}/{2d}} k^{2/d}}, &d\ge 3.
 \end{cases}
\end{split}
\label{eq:Gamma}\ee
Roughly speaking, this estimate means that if the random walk stays in $\ball{x}{l}$, then in every $(c_0l)^2$ steps, it has more than $c_1\Gamma(|{\cal X}_l^\circ|)$ probability of hitting an obstacle (see Lemma~\ref{lem:calXo}). 
As mentioned at the beginning of this subsection, we force the crossing to avoid $\ball{x}{l/4}$ when the random walk spends little time in $\ball{x}{l}$. We need another random walk estimate to quantify the effect of this switching and also some others to deal with complementary cases. Those random walk estimates will be tagged as (RW1)--(RW5) and {restated and proved in Lemma~\ref{lem:rwest}} in Subsection~\ref{SS:tech-proof}.
\medskip

\noindent{\bf Obstacles deep in the interior of $\ball{x}{l}$.} Note that in~\eqref{remove-O}, the bound is in terms of $|{\cal X}_l^\circ(x)|$, the number of skeletal points of $\Oi\cap \ball{x}{l}$ in $\ball{x}{l/2}$. 
To ensure that the gain in~\eqref{remove-O} dominates the cost of removing all obstacles in $\ball{x}{l}$, we need that $\ball{x}{l}$ has sufficiently many obstacles deep in its interior, namely, $|{\cal X}_l^\circ(x)| \geq \rho |{\cal X}_l(x)|$ for some $\rho>0$. The next lemma guarantees that we can achieve this by slightly changing the radius.
\bl\label{lem:obstaclebal}
Suppose that $x\in \Oi$ and $|\Oi\cap \ball{x}{L}|/|\ball{x}{L}|<\delta$ for some $L\in\N$. Then we can find $\rho>0$ independent of $L$ and $\delta$, such that there exists $L^{5/6}\leq l\leq L$ with $|\Oi\cap \ball{x}{l}|/|\ball{x}{l}|<\delta$ and $|{\cal X}^\circ_l(x)| \geq \rho \min\{|{\cal X}_l(x), \delta l^{d-1/2}\}$.
\el
Therefore it suffices to prove Lemma \ref{lem:odensity} where we replace the event $E^\delta_l(x)$ by the event
\be\label{eq:Edeltarho}
E^{\delta, \rho}_l(x) := E^\delta_l(x) \cap \big\{\, |{\cal X}^\circ_l(x)| \geq \rho \min\{|{\cal X}_l(x)|, \delta l^{d-1/2}\} \,\big\}
\ee
with $l\in[(\log N)^{5/2},\varrho_N]$, which will be carried out in the next subsection. 

\medskip

\medskip
\noindent
{\bf Proof of Lemma \ref{lem:obstaclebal}.}
Let
\be
j^* := \min\big\{j\geq 0\colon l=L/2^j \mbox{ satisfies } |{\cal X}^\circ_l(x)| \geq \rho \min\{|{\cal X}_l(x)|, \delta l^{d-1/2}\} \big\}. 
\ee
We will show that $l^*:=L/2^{j^*}$ satisfies the desired properties if $\rho>0$ is small enough. 

If $j^*=0$, then $l^*=L$ works. Otherwise, for all $l=L/2^j$ with $0\leq j\leq j^*-1$, we have
\be\label{eq:calXbd}
|{\cal X}^\circ_l(x)| < \rho \min\{|{\cal X}_l(x), \delta l^{d-1/2}\} \leq \rho |{\cal X}_l(x)|. 
\ee
We claim that for all $l\geq 1$, 
\be\label{eq:calXcomp}
|{\cal X}_l(x)| \leq C_d |{\cal X}^\circ_{2l}(x)| \quad \mbox{for some $C_d$ depending only on $d$}. 
\ee
Together with \eqref{eq:calXbd}, this implies that 
\be
|{\cal X}^\circ_{2l^*}(x)| < \rho C_d |{\cal X}^\circ_{4 l^*}(x)| <\cdots < (\rho C_d)^{j^*-1} |{\cal X}^\circ_L(x)| \leq (\rho C_d)^{j^*-1} C L^d.
\ee
Since $|{\cal X}^\circ_{L}(x)|, \dotsc, |{\cal X}^\circ_{2l^*}(x)|\geq 1$ because $x\in \Oi$, we must have $(\rho C_d)^{-j^*+1} \leq CL^d$. 
We can then choose $\rho>0$ small such that $2^{j^*}\leq L^{5/6}$, and hence $l^*=L/2^{j^*} \in [L^{5/6}, L]$. 

To bound the volume fraction of obstacles in $\ball{x}{l^*}$ when $j^*\geq 1$, we can apply \eqref{eq:calXbd} at $l= L/2^{j^*-1} =2l^*$ to obtain
\begin{equation}
\begin{split}
  |\Oi \cap \ball{x}{l^*}| 
&\leq |{\cal X}^\circ_{2l^*}(x)| \cdot |\ball{0}{(2l^*)^{1/2d}}|\\ 
&< \rho \delta (2l^*)^{d-1/2} |\ball{0}{(2l^*)^{1/2d}}|\\
& < \delta |\ball{x}{l^*}|,
\end{split}
\end{equation}
where the last inequality holds if $\rho>0$ is chosen small.  

Lastly, we prove the claim \eqref{eq:calXcomp}. Note that by our construction of ${\cal X}_l(x)$ and ${\cal X}_{2l}^\circ(x)$, 
\begin{equation}
 \begin{split}
 &\bigcup_{y\in {\cal X}_l(x)} \ball{y}{l^{1/2d}} \subset \bigcup_{y\in \Oi\cap \ball{x}{l}} \ball{y}{l^{1/2d}}, \\
 &\quad \mbox{where} \quad  \Oi\cap \ball{x}{l} \subset \bigcup_{z\in {\cal X}_{2l}^\circ(x)} \ball{z}{(2l)^{1/2d}}. 
\end{split}
\end{equation}
Therefore by doubling the radii of the balls $\{\ball{z}{(2l)^{1/2d}}\}_{z\in {\cal X}_{2l}^\circ(x)} $, we obtain
$$
\bigcup_{y\in {\cal X}_l(x)} \ball{y}{l^{1/2d}} \subset \bigcup_{z\in {\cal X}_{2l}^\circ(x)} \ball{z}{2(2l)^{1/2d}}. 
$$
Since the balls $\{\ball{y}{l^{1/2d}}\}_{y\in {\cal X}_l(x)}$ are disjoint, a volume calculation then yields \eqref{eq:calXcomp}.\qed

\subsection{Proof of Lemma \ref{lem:odensity}}\label{SS:odenproof}
As remarked after Lemma \ref{lem:obstaclebal}, if suffices to prove Lemma \ref{lem:odensity} with the event $E^\delta_l(x)$ replaced by $E^{\delta, \rho}_l(x)$, with $l\in[(\log N)^{5/2},\varrho_N]$. We will prove the following bound on $\mu_N(E^{\delta, \rho}_l(x))$, which immediately implies Lemma \ref{lem:odensity} by a union bound over all $x\in \ball{0}{2\varrho_N}$ and $(\log N)^{{5/2}} \le l \le \varrho_N$.

\bl\label{lem:Edeltalbd}
Let $E^{\delta, \rho}_l(x)$ be defined as in \eqref{eq:Edeltarho}.
There exist $c_3>0$ depending on $d$, $p$ and $\delta$ such that for all $l\in[\log N, \varrho_N]$, we have 
\be
\mu_N(E^{\delta, \rho}_l(x)) \leq \exp\{-c_3l^{1/2}\}.
\ee
\el
\noindent{\bf Proof of Lemma~\ref{lem:Edeltalbd}.}
Recall the path decomposition introduced in Section \ref{SS:odenoutline}, where we identified the successive crossings from ${\overline{\ball{x}{l/2}}=\ball{x}{l/2}\cup\partial\ball{x}{l/2}}$ to $\ball{x}{l}^c$ during the time intervals $[\sigma_k, \tau_k]$, $k\in\N$. Since these stopping times are truncated by $N$, the duration $\tau_k-\sigma_k$ can be zero. Henceforth, the word \emph{crossing} refers to $S_{[\sigma_k,\tau_k]}$ with $\tau_k-\sigma_k>0$. In particular, the last crossing may be incomplete. To carry out the path and environment switching, we distinguish between three cases and in order to describe them, we need some more notation. We denote a sequence of numbers or vectors in bold face as $\bm{a}=(a_k)_{k\ge 1}$ and introduce the set of interlacing sequences 
\begin{equation}
\bm{I}_N :=\{(\bm{s},\bm{t})\colon 0\le s_k\le t_k\le s_{k+1}\le N \textrm{ for all }k\in\N\}.
\end{equation}
For $(\bm{s},\bm{t})\in\bigcup_{N\ge 1}\bm{I}_N$, we write 
\begin{equation}
K(\bm{s},\bm{t}):=\sup\{k\ge 1\colon t_k-s_k>0\}
\end{equation}
which represents the number of crossings when $(\bm{s},\bm{t})=(\bm{\sigma},\bm{\tau})$. Now we are ready to describe the three cases. Recall that $c_0>0$ has already been chosen to satisfy Lemma~\ref{lem:rwest}. The constant $\delta>0$ is to be determined later, depending only on the dimension $d$ and the open probability $p$.
\begin{itemize}
\item[(1)]
There are many crossings and more than half of them are short ($\le(c_0l)^2$), that is, $(\bm{\sigma},\bm{\tau})$ belongs to 
\begin{equation}
 \bm{F}_1:=\left\{(\bm{s},\bm{t})\in\bm{I}_N\colon 
  \left|\{k\ge 1\colon 0<t_k - s_k\le (c_0l)^2\}\right|>\frac{1}{2}K(\bm{s},\bm{t})\vee \delta^{1/d} l^d \right\}.
\label{ManyShort}
\end{equation}
\item[(2)]
The total time duration of the long crossings ($>(c_0l)^2$) is long, that is, $(\bm{\sigma},\bm{\tau})$ belongs to 
\begin{equation}
\bm{F}_2:=\left\{(\bm{s},\bm{t})\in\bm{I}_N\colon\sum_{k\ge 1} (t_k - s_k)1_{\{t_k-s_k>(c_0l)^2\}}> \delta^{1/d}c_0^2 l^{d+2}\right\}.
\label{TotalLong} 
\end{equation}
\item[(3)]
The number of crossings as well as {their total duration} are small, that is, $(\bm{\sigma},\bm{\tau})$ belongs to 
\begin{equation}
 \bm{F}_3:=\left\{(\bm{s},\bm{t})\in\bm{I}_N\colon K(\bm{s},\bm{t}) \le 2\delta^{1/d} l^d\textrm{ and } 
\sum_{k\ge 1} (t_k - s_k) \le 2\delta^{1/d}c_0^2 l^{d+2}\right\}.
\label{FewAndShort}
\end{equation}
\end{itemize}
These three cases exhaust all possibilities. Indeed, if $(\bm{s},\bm{t})\not\in \bm{F}_2$, then the number of long crossings is at most $\delta^{1/d}l^d$, and their total duration is at most $\delta^{1/d}c_0^2 l^{d+2}$. If in addition, $(\bm{s},\bm{t})\not\in \bm{F}_1$, then the number of short crossings is either less than the number of long crossings, or less than $\delta^{1/d}l^d$; either way, it is bounded by $\delta^{1/d}l^d$, and their total duration is at most $\delta^{1/d}c_0^2 l^{d+2}$. Combining the short and long crossings, one finds that $(\bm{s},\bm{t})\in \bm{F}_3$.


For each of the three cases above, by summing over all possible values of $(\sigma_k, \tau_k) \in \bm{F}_i$ ($i\in\{1,2,3\}$) and the position of the walk at these times, we obtain
\begin{equation}
\begin{split}
 &\bP\left(\tau_{\Oi}>N\textrm{ and }(\bm{\sigma},\bm{\tau})\in \bm{F}_i\right)\\
 &\quad 
 =\sum_{(\bm{s},\bm{t})\in \bm{F}_i}
 \sum_{\bm{u},\bm{v}} p^{{{\Z^d}\setminus (\Oi\cup\ball{x}{l/2})}}_{s_1}(0,u_1)\\
 &\qquad\times\prod_{k\ge 1} p^{\ball{x}{l}\setminus \Oi}_{t_k-s_k}(u_k,v_k)p^{\Z^d\setminus(\Oi\cup\ball{x}{l/2})}_{s_{k+1}-t_k}(v_k,u_{k+1}),
\label{RW-decomp}
\end{split}
\end{equation}
where $\bm{u}$ and $\bm{v}$ range over all the possible starting and ending points of crossings with $(\bm{\sigma},\bm{\tau})=(\bm{s},\bm{t})$. In particular, $u_k\in \partial\ball{x}{l/2}$ and $v_k\in \partial\ball{x}{l}$ as long as $s_k<N$ and $t_k<N$ respectively, {except possibly $u_1=0$ when $0\in{\ball{x}{l/2}}$.} For simplicity, we assume $\delta^{1/d}l^d\in \N$ in this proof. 

\medskip
\noindent\underline{Case (1)}: In this case, we remove all the obstacles inside $\ball{x}{l}$ and lengthen all the short crossings by $l^2$. We formalize this as the environment and path switching~\eqref{eq:switch} by setting
\begin{align}
&(A_1,A_2):=(\{(\bm{\sigma},\bm{\tau})\in \bm{F}_1\}, \{\tau_{\Oi}>N\});\\
&(E_1,E_2):=({E^{\delta,\rho}_l(x)},\{\Oi\cap\ball{x}{l}=\emptyset\}).
\end{align}
Since $|\Oi\cap\ball{x}{l}|\le \delta |\ball{x}{l}|$ on the event $E^{\delta,\rho}_l(x)$, the cost of environment switching can be estimated as
\begin{equation}
\begin{split}
\frac{\P(E^{\delta,\rho}_l)}{\P(\Oi\cap \ball{x}{l}=\emptyset)}
& \le \frac{\P\left(|\Oi\cap\ball{x}{l}|\le \delta |\ball{x}{l}|\right)}
{\P(\Oi\cap \ball{x}{l}=\emptyset)}\\
& \le \sum_{i=0}^{\delta |\ball{x}{l}|}\binom{|\ball{x}{l}|}{i}
\left(\frac{1-p}{p}\right)^{{i}}\\
& \le e^{c\delta(\log{\frac{1}{\delta}}) l^d}
\end{split}
\label{case(1)-obs}
\end{equation}
by using Stirling's approximation. Alternatively, one can also interpret this as a consequence of Cramer's large deviation principle.  

On the other hand, since the short crossings are unlikely to happen, we gain in the random walk probability by lengthening them. More precisely, {we will see in Lemma~\ref{lem:rwest} that} for $t_k-s_k\le (c_0l)^2$, $u_k\in {\overline{\ball{x}{l/2}}}$ and $v_k\in \partial\ball{x}{l}$, 
\begin{equation}
\begin{split}
 p^{\ball{x}{l}\setminus \Oi}_{t_k-s_k}(u_k,v_k)
 &\le p^{\ball{x}{l}}_{t_k-s_k}(u_k,v_k)\\
 &\le \frac{1}{100} p^{\ball{x}{l}}_{t_k-s_k+l^2}(u_k,v_k),
\end{split}
\label{too-short}
\tag{RW2}
\end{equation}
where $l^2$ is to be understood as $l^2+1$ when $l$ is odd as mentioned in Remark~\ref{rem:parity}. It is easy to see that this change is harmless for the following argument and henceforth we will not mention this parity convention again. 
Setting 
\begin{equation}
{I_k:=1_{\{0<t_k-s_k\le (c_0l)^2, u_k\in \partial\ball{x}{l/2},v_k\in \partial\ball{x}{l}\}},}
\end{equation}
we can bound the product in the right-hand side of \eqref{RW-decomp} by 
\begin{equation}
\begin{split}
&\prod_{k\ge 1} p^{\ball{x}{l}\setminus \Oi}_{t_k-s_k}(u_k,v_k)p^{\Z^d\setminus(\Oi\cup\ball{x}{l/2})}_{s_{k+1}-t_k}(v_k,u_{k+1})\\
&\quad \le 100^{-\sum_{k\ge 1} I_k}
\prod_{k\ge 1} p^{\ball{x}{l}}_{t_k+l^2 I_k -s_k}(u_k,v_k)p^{\Z^d\setminus(\Oi\cup\ball{x}{l/2})}_{s_{k+1}-t_k}(v_k,u_{k+1})\\
&\quad=100^{-\sum_{k\ge 1} I_k}
\prod_{k\ge 1} p^{\ball{x}{l}}_{\tilde{t}_k-\tilde{s}_k}(u_k,v_k)p^{\Z^d\setminus(\Oi\cup\ball{x}{l/2})}_{\tilde{s}_{k+1}-\tilde{t}_k}(v_k,u_{k+1}),
\end{split}
\label{F_1|stuv}
\end{equation}
where $\tilde{s}_k:=s_k+l^2\sum_{m<k}I_m $ and $\tilde{t}_k:=t_k+l^2\sum_{m\le k}I_m$. Let us consider the cases 
\begin{equation}
(\bm{\sigma},\bm{\tau})\in \bm{F}_{1,j}
:=\left\{(\bm{s},\bm{t})\in \bm{F}_1\colon \sum_{k\ge 1}I_k=j\right\},
\end{equation}
that is, exactly $j$ crossings are lengthened, separately for $j\in\{\delta^{1/d}l^d,\delta^{1/d}l^d+1,\dotsc,N\}$. Summing~\eqref{F_1|stuv} multiplied by $p^{{{\Z^d}\setminus (\Oi\cup\ball{x}{l/2})}}_{s_1}(0,u_1)$ over $(\bm{s},\bm{t})\in \bm{F}_{1,j}$ and $(\bm{u},\bm{v})$, we obtain
\begin{equation}
\begin{split}
&\bP(\tau_\Oi>N\textrm{ and }(\bm{\sigma},\bm{\tau})\in \bm{F}_{1,j})\\
&\quad =\sum_{(\bm{s},\bm{t})\in \bm{F}_{1,j}}\sum_{\bm{u},\bm{v}} 
p^{{{\Z^d}\setminus (\Oi\cup\ball{x}{l/2})}}_{s_1}(0,u_1)
 \prod_{k\ge 1} p^{\ball{x}{l}\setminus \Oi}_{t_k-s_k}(u_k,v_k)p^{\Z^d\setminus(\Oi\cup\ball{x}{l/2})}_{s_{k+1}-t_k}(v_k,u_{k+1})\\
&\quad \le  100^{-j}
\sum_{({\bm{s}},{\bm{t}})\in \bm{F}_{1,j}}\sum_{\bm{u},\bm{v}} 
p^{{{\Z^d}\setminus (\Oi\cup\ball{x}{l/2})}}_{\tilde{s}_1}(0,u_1) 
\prod_{k\ge 1} p^{\ball{x}{l}}_{\tilde{t}_k-\tilde{s}_k}(u_k,v_k)p^{\Z^d\setminus(\Oi\cup\ball{x}{l/2})}_{\tilde{s}_{k+1}-\tilde{t}_k}(v_k,u_{k+1}).
\end{split}
\label{case(1)-rw-mult}
\end{equation}
In order to relate this last line to $\bP(\tau_\Oi>N)$, we rewrite~\eqref{case(1)-rw-mult} as a summation over 
\begin{equation}
 (\tilde{\bm{s}},\tilde{\bm{t}})\in\tilde{F}_{1,j}
:=\{(\tilde{\bm{s}},\tilde{\bm{t}})\colon (\bm{s},\bm{t})\in \bm{F}_{1,j}\}.
\end{equation}
Note that each $(\tilde{\bm{s}},\tilde{\bm{t}})$ may come from different $(\bm{s},\bm{t})$'s but with the same number of crossings $K(\bm{s},\bm{t})$ and hence its pre-image has cardinality at most $2^{K(\bm{s},\bm{t})}\le 2^{2+2j}$ on $\bm{F}_{1,j}$. Recalling also that $j\ge \delta^{1/d}l^d$, it follows from~\eqref{case(1)-rw-mult} that
\begin{equation}
\begin{split}
&\bP(\tau_\Oi>N\textrm{ and }(\bm{\sigma},\bm{\tau})\in \bm{F}_{1,j})\\
&\quad \le 
4\cdot 25^{-\delta^{1/d}l^d}
\sum_{(\tilde{\bm{s}},\tilde{\bm{t}})\in\tilde{F}_{1,j}}
\sum_{\bm{u},\bm{v}} 
 p^{{{\Z^d}\setminus (\Oi\cup\ball{x}{l/2})}}_{\tilde{s}_1}(0,u_1)\\
&\quad\quad \times\prod_{k\ge 1} p^{\ball{x}{l}}_{\tilde{t}_k-\tilde{s}_k}(u_k,v_k)p^{\Z^d\setminus(\Oi\cup\ball{x}{l/2})}_{\tilde{s}_{k+1}-\tilde{t}_k}(v_k,u_{k+1}).
\end{split}
\label{case(1)-rw-mult2}
\end{equation}
The sum on the right-hand side is seen to be bounded by $\bP(\tau_{\Oi\setminus\ball{x}{l}}>N+jl^2)$. Indeed, any $(\tilde{\bm{s}},\tilde{\bm{t}})\in \tilde{F}_{1,j}$ has terminal time $\tilde{s}_{K(\tilde{\bm{s}},\tilde{\bm{t}})+1}= N+jl^2$ by construction and hence the above sum represents (a part of) the path decomposition before time $N+jl^2$. 

Taking a sum of~\eqref{case(1)-rw-mult2} over $j$, we obtain
\begin{equation}
\begin{split}
&\bP(\tau_\Oi>N\textrm{ and }(\bm{\sigma},\bm{\tau})\in \bm{F}_1)\\
&\quad\le 
4\cdot 25^{-\delta^{1/d}l^d}\sum_{j=\delta^{1/d}l^d}^N\bP\left(\tau_{\Oi\setminus\ball{x}{l}}>N+jl^2\right)\\
&\quad \le 
4N \cdot 25^{-\delta^{1/d}l^d}\bP\left(\tau_{\Oi\setminus\ball{x}{l}}>N\right).
\end{split}
\end{equation}
Since $\tau_{\Oi\setminus\ball{x}{l}}=\tau_{\Oi}$ on $\{\Oi\cap\ball{x}{l}=\emptyset\}$, recalling~\eqref{case(1)-obs} and $l\ge \log N$ and choosing $\delta>0$ small, we can use~\eqref{eq:switch} to conclude that
\begin{equation}
\mu_N\left((S,\Oi)\in \left(\{(\bm{\sigma},\bm{\tau})\in \bm{F}_1\},E^{\delta,\rho}_l(x)\right)\right)
\le e^{-c\delta^{1/d}l^d}.
\end{equation}

\medskip
\noindent\underline{Case (2)}:
In this case, we again remove all the obstacles in $\ball{x}{l}$ and leave the crossings unchanged. We apply the same environment and path switching as in the previous case. 
Since the long crossings have higher probability of hitting the obstacles, removing the obstacles gives us a large gain in the random walk probability. In order to make it precise, note first that $(\bm{s},\bm{t})\in \bm{F}_2$ implies 
\begin{equation}
 \sum_{k\ge 1} \left \lfloor \frac{t_k - s_k}{(c_0l)^2} \right\rfloor 
\ge  \frac{1}{2}\sum_{k\ge 1} \frac{t_k - s_k}{(c_0l)^2} 1_{\{t_k-s_k\ge (c_0l)^2\}}
> \frac{\delta^{1/d}}{2}l^d.
\end{equation} 
Given this, we use the aforementioned {(see also Lemma~\ref{lem:rwest})}
\begin{equation}
 p^{\ball{x}{l}\setminus \Oi}_{t_k-s_k}(u_k,v_k)
 \le e^{-c_1(\lfloor (t_k-s_k)/(c_0l)^2\rfloor{-1}) \Gamma(|{\cal X}_l^\circ|)} 
p^{\ball{x}{l}}_{t_k-s_k}(u_k,v_k)
\label{remove-O}\tag{RW1}
\end{equation}
repeatedly to obtain
\begin{equation}
\begin{split}
&\bP(\tau_\Oi>N\textrm{ and }(\bm{\sigma},\bm{\tau})\in \bm{F}_2)\\
&\quad=\sum_{(\bm{s},\bm{t})\in \bm{F}_2}\sum_{\bm{u},\bm{v}}
p^{{{\Z^d}\setminus (\Oi\cup\ball{x}{l/2})}}_{s_1}(0,u_1)
\prod_{k\ge 1} p^{\ball{x}{l}\setminus \Oi}_{t_k-s_k}(u_k,v_k)p^{\Z^d\setminus(\Oi\cup\ball{x}{l/2})}_{s_{k+1}-t_k}(v_k,u_{k+1})\\
&\quad \le e^{-c\delta^{1/d}l^d\Gamma({\cal X}_l^\circ)}
\sum_{(\bm{s},\bm{t})\in \bm{F}_2}\sum_{\bm{u},\bm{v}}
p^{{{\Z^d}\setminus (\Oi\cup\ball{x}{l/2})}}_{s_1}(0,u_1)\\
&\quad\quad\times\prod_{k\ge 1} p^{\ball{x}{l}}_{t_k -s_k}(u_k,v_k)p^{\Z^d\setminus(\Oi\cup\ball{x}{l/2})}_{s_{k+1}-t_k}(v_k,u_{k+1})\\
&\quad\le e^{-c\delta^{1/d}l^d\Gamma(|{\cal X}_l^\circ|)}
\bP(\tau_{\Oi\setminus\ball{x}{l}}>N)
\end{split}
\label{case(2)-rw}\end{equation}
uniformly in $\Oi$, {and we can replace $\tau_{\Oi\setminus\ball{x}{l}}$ by $\tau_{\Oi}$ on $\{\Oi\cap\ball{x}{l}=\emptyset\}$ as before.}

We are going to show that the cost {of removing the obstacles in $\ball{x}{l}$} is much smaller than the above gain in the random walk probability. Recall that $|\mathcal{X}_l^\circ| \geq \rho \min (\delta l^{d-1/2},|\mathcal{X}_l|)$ on the event $E^{\delta,\rho}_l(x)$ and also note that~\eqref{X-separation} implies the bound $|\mathcal{X}_l^\circ| \leq C\delta l^{d-1/2}$ for some $C>0$ depending only on the dimension. In the case $|\mathcal{X}_l^\circ| \in [\rho\delta l^{d-1/2},C\delta l^{d-1/2}]$, we have that $\Gamma(|\mathcal{X}_l^\circ|)$ is bounded below by a positive constant, recalling the definition of $\Gamma$ in~\eqref{eq:Gamma}. Combining~\eqref{case(2)-rw} with~\eqref{case(1)-obs} and~\eqref{eq:switch} and choosing $\delta$ small, we get
\begin{equation}
\begin{split}
& \mu_N\left((S,\Oi)\in\left(\{(\bm{\sigma},\bm{\tau})\in \bm{F}_2\},(E^{\delta,\rho}_l(x)\cap \{|{\cal X}_l^\circ| \geq \rho\delta l^{d-1/2}\})\right)\right)\\
&\quad \le e^{-c\delta^{1/d}l^d}. 
\end{split}
 \label{case(2)-1}
\end{equation}
In the other case $|{\cal X}^\circ_l|\in[\rho|{\cal X}_l|,\rho\delta l^{d-1/2})$, instead, we have {that for sufficiently small $\delta$,}
\begin{equation}
\Gamma(|{\cal X}^\circ_l|)\ge 
c
 \delta^{-2/d}l^{1/2-d}|{\cal X}^\circ_l|/|\log \delta| 
\end{equation}
recalling~\eqref{eq:Gamma} again. Indeed, for $d\ge 3$, using $|{\cal X}^\circ_l|\le \rho\delta l^{d-1/2}$ in the denominator in~\eqref{eq:Gamma} yields the above bound without $|\log \delta|$; for $d=2$, the argument of $\log$ in~\eqref{eq:Gamma} is large and the above bound follows from the fact that {$1/\log r= r^{-1} (r/\log r)$ and} $r/\log r$ is increasing for $r$ large. 
Given this lower bound on $\Gamma(|{\cal X}_l^\circ|)$, the gain from the random walk becomes
\begin{equation}
\frac{\bP(\tau_\Oi>N\textrm{ and }(\bm{\sigma},\bm{\tau})\in \bm{F}_2)}
{\bP(\tau_{\Oi\setminus\ball{x}{l}}>N)}
\le 
 e^{-c\delta^{-1/d}l^{1/2}|{\cal X}_l^\circ|/|\log\delta|}.
\label{case(2)-rw2}
\end{equation}
{Note that this gain is much smaller than} the bound~\eqref{case(1)-obs} on the cost of environment switching when $|{\cal X}^\circ_l|$ is small. {Therefore we have to} estimate the environment switching cost more carefully {and this is done} by considering {separately} the events $\{|{\cal X}^\circ_l|=k\}$ for $k <\rho\delta l^{d-1/2}$. 

In the case under consideration, $|{\cal X}^\circ_l|=k$ implies $|{\cal X}_l|\le \rho^{-1}k$. Recall also that all the obstacles in $\ball{x}{l}$ are contained in $\bigcup_{x\in {\cal X}_l}\ball{x}{l^{1/2d}}$ by~\eqref{X-covering}. Therefore on each event $\{|{\cal X}^\circ_l|=k\}$, by counting the possible choices of ${\cal X}_l$ first and then the configurations inside $\bigcup_{x\in {\cal X}_l}\ball{x}{l^{1/2d}}$, we can estimate the cost of environment switching as
\begin{equation}
\begin{split}
&\frac{\P(E^{\delta,\rho}_l\cap\{{\cal X}^\circ_l=k\})}{\P(\Oi\cap \ball{x}{l}=\emptyset)}\\
&\quad \le \sum_{i=k}^{\lfloor\rho^{-1}k\rfloor}\binom{|\ball{x}{l}|}{i}
\sum_{j=0}^{i|\ball{x}{l^{1/2d}}|}\binom{i|\ball{x}{l^{1/2d}}|}{j}\left(\frac{1-p}{p}\right)^j\\
&\quad \le e^{Ck l^{1/2}}.
\end{split}
\label{case(2)-obs}
\end{equation}
Combining this with~\eqref{case(2)-rw2} and choosing $\delta$ small, we obtain 
\begin{equation}
\begin{split}
& \mu_N\left((S,\Oi)\in\left(\{(\bm{\sigma},\bm{\tau})\in \bm{F}_2\},(E^{\delta,\rho}_l(x)\cap \{|{\cal X}_l^\circ| =k\})\right)\right)\\
&\quad \le e^{-c\delta^{-1/d}l^{1/2}k/|\log\delta|}
\end{split}
\label{case(2)-2}
\end{equation}
for each $k < \rho\delta l^{d-1/2}$. 
Finally we sum~\eqref{case(2)-1} and~\eqref{case(2)-2} for $k\in\{1,2,\dotsc,\lfloor\rho\delta l^{d-1/2}\rfloor\}$ to obtain
\begin{equation}
\mu_N\left(E^{\delta,\rho}_l(x) \times \{(\bm{\sigma},\bm{\tau})\in \bm{F}_2\}\right)
\le e^{-c\delta^{1/d}l^{1/2}}.   
\end{equation}

\medskip
\noindent\underline{Case (3)}: In this case, we remove all the obstacles in $A(x;l/4,l)=\ball{x}{l}\setminus \ball{x}{l/4}$, change the obstacles configuration inside $\ball{x}{l/4}$ to typical configurations and force all the crossings to avoid $\ball{x}{l/4}$ after lengthening them by $l^2$. Complication arises when the origin is close to $\ball{x}{l/4}$ because then it costs a lot to force the first crossing to avoid $\ball{x}{l/4}$. We first deal with the simpler case $0\not\in\ball{x}{l/2}$ by applying the environment and path switching~\eqref{eq:switch} with
\begin{align}
&(A_1,A_2):=(\{(\bm{\sigma},\bm{\tau})\in \bm{F}_3\}, \{\tau_{\Oi}\wedge\tau_{\ball{x}{l/4}}>N\});\\
&(E_1,E_2):=(E^{\delta,\rho}(x),\{\Oi\cap A(x;l/4,l)=\emptyset\}).
\end{align}
The gain from the environment switching can be estimated as
\begin{equation}
\begin{split}
\frac{\P(E^{\delta,\rho}_l)}{\P(\Oi\cap A(x;l/4,l)=\emptyset)}
& \le \frac{\P\left(|\Oi\cap\ball{x}{l}|\le \delta|\ball{x}{l}|\right)}{\P(\Oi\cap A(x;l/4,l)=\emptyset)}\\
&\le p^{|\ball{x}{l/4}|}
\sum_{i=0}^{\delta |\ball{x}{l}|}\binom{|\ball{x}{l}|}{i}
\left(\frac{1-p}{p}\right)^{{i}}\\
& \le e^{-cl^d}
\end{split}
\label{GainInsideTypical}
\end{equation}
by using Stirling's approximation (or Cramer's large deviation principle as before).  

On the other hand, if we force the random walk to stay in $A(x;l/4,l)$ instead of $\ball{x}{l}$, the extra cost per step should be measured by the difference of the principal Dirichlet eigenvalues of the discrete Laplacian in $A(x;l/4,l)$ and $\ball{x}{l}$, which is of order $l^{-2}$. In fact, we will see in Lemma~\ref{lem:rwest} that uniformly in $u_k\in {\partial\ball{x}{l/2}}$ and $v_k\in \partial\ball{x}{l}$, 
\begin{equation}
\begin{split}
 p^{\ball{x}{l}\setminus \Oi}_{t_k-s_k}(u_k,v_k)
 &\le p^{\ball{x}{l}}_{t_k-s_k}(u_k,v_k)\\
 &\le e^{c_2((t_k-s_k)l^{-2}+1)} p^{A(x;l/4,l)}_{t_k-s_k+l^2}(u_k,v_k).
\end{split}
\label{aviod-center}
\tag{RW3}
\end{equation}
If $s_k<N$ and $t_k=N$ for some $k\in\N$, then this (last) crossing may be incomplete and its endpoint $v_k$ may be in $\ball{x}{l/4}$. In that case, the path switching should be done differently and we change the endpoint of the last crossing to $\tilde{v}_k:=v_k+(5l/8)\mathbf{e}_1$. The cost is bounded similarly as 
\begin{equation}
p^{\ball{x}{l}\setminus \Oi}_{t_k-s_k}(u_k,v_k) 
\le e^{c_2((t_k-s_k)l^{-2}+1)} p^{A(x;l/4,l)}_{t_k-s_k+2l^2}(u_k,\tilde{v}_k).
\label{switch-end}
\tag{RW4}
\end{equation}
We define $(\tilde{s}_k,\tilde{t}_k)_{k\ge 1}$ as the starting and ending times of switched crossings, similarly to Case~(1), and also
\begin{equation}
\begin{split}
& (\tilde{v}_k,\tilde{u}_{k+1})\\
&\quad :=
 \begin{cases}
 (v_k+(5l/8)\mathbf{e}_1,v_k+(5l/8)\mathbf{e}_1),&\textrm{if }s_k<N, t_k=N \textrm{ and }v_k \in \ball{x}{l/4},\\
 (v_k,u_{k+1}), &\textrm{otherwise.}  
 \end{cases}
\end{split}
\end{equation}
Then using the above estimates and recalling the definition of $\bm{F}_3$, we can bound the product in the right-hand side of \eqref{RW-decomp} by 
\begin{equation}
\begin{split}
&\prod_{k\ge 1} p^{\ball{x}{l}\setminus \Oi}_{t_k-s_k}(u_k,v_k)p^{\Z^d\setminus(\Oi\cup\ball{x}{l/2})}_{s_{k+1}-t_k}(v_k,u_{k+1})\\
&\quad \le \exp\left\{c_2\sum_{k\ge 1}\frac{t_k-s_k}{l^2}+ K(\bm{s},\bm{t})\right\}\\
&\qquad\times\prod_{k\ge 1} p^{A(x;l/4,l)}_{\tilde{t}_k -\tilde{s}_k}(\tilde{u}_k,\tilde{v}_k)p^{\Z^d\setminus(\Oi\cup\ball{x}{l/2})}_{\tilde{s}_{k+1}-\tilde{t}_k}(\tilde{v}_k,\tilde{u}_{k+1})\\
&\quad=e^{c\delta^{1/d}l^d}
\prod_{k\ge 1} p^{A(x;l/4,l)}_{\tilde{t}_k-\tilde{s}_k}(\tilde{u}_k,\tilde{v}_k)p^{\Z^d\setminus(\Oi\cup\ball{x}{l/2})}_{\tilde{s}_{k+1}-\tilde{t}_k}(\tilde{v}_k,\tilde{u}_{k+1}).
\end{split}
\label{F_3|stuv}
\end{equation}
Note that each $(\tilde{\bm{s}},\tilde{\bm{t}})$ has pre-image of cardinality at most $2^{2\delta^{1/d}l^d}$. Therefore summing~\eqref{F_3|stuv} over $(\bm{s},\bm{t},\bm{u},\bm{v})$ separately according to the number of crossings as in Case (1), we can obtain
\begin{equation}
\begin{split}
&\bP(\tau_\Oi>N\textrm{ and }(\bm{\sigma},\bm{\tau})\in \bm{F}_3)\\
&\quad =\sum_{(\bm{s},\bm{t})\in \bm{F}_3}\sum_{\bm{u},\bm{v}} 
p^{{{\Z^d}\setminus (\Oi\cup\ball{x}{l/2})}}_{s_1}(0,u_1)
 \prod_{k\ge 1} p^{\ball{x}{l}\setminus \Oi}_{t_k-s_k}(u_k,v_k)p^{\Z^d\setminus(\Oi\cup\ball{x}{l/2})}_{s_{k+1}-t_k}(v_k,u_{k+1})\\
&\quad \le  e^{c\delta^{1/d}l^d}
\sum_{(\tilde{\bm{s}},\tilde{\bm{t}})}\sum_{\tilde{\bm{u}},\tilde{\bm{v}}} 
 p^{{{\Z^d}\setminus (\Oi\cup\ball{x}{l/2})}}_{\tilde{s}_1}(0,u_1)
\prod_{k\ge 1} p^{\ball{x}{l}}_{\tilde{t}_k-\tilde{s}_k}(\tilde{u}_k,\tilde{v}_k)p^{\Z^d\setminus(\Oi\cup\ball{x}{l/2})}_{\tilde{s}_{k+1}-\tilde{t}_k}(\tilde{v}_k,\tilde{u}_{k+1})\\
&\quad\le CN e^{c\delta^{1/d}l^d}\bP(\tau_{\Oi\cup\ball{x}{l/4}}>N)
\end{split}
\end{equation}
uniformly in $\Oi$ in the case $0\not\in\ball{x}{l/2}$. Recalling~\eqref{GainInsideTypical} and $l\ge \log N$ and using~\eqref{eq:switch}, we conclude that in this case
\begin{equation}
\mu_N\left((S,\Oi)\in \left(\{(\bm{\sigma},\bm{\tau})\in \bm{F}_3\},E^{\delta,\rho}_l(x)\right)\right)
\le e^{-c l^d}.
\label{prob-F_3}
\end{equation}

Finally, we deal with the case $0\in\ball{x}{l/2}$. In this case, the starting point of first crossing may be close to (or even inside) $\ball{x}{l/4}$ and we want to ensure that the random walk gets away from that ball quickly. To this end, we fix a path $\pi(x;l)\subset \ball{x}{l/2}$ of length $l$ from $0$ to $n\mathbf{e}_1\in\partial\ball{x}{l/2}$ ($n\in\N$) and modify the environment and path switching as follows (see Figure~\ref{fig:case(3)}): 
\begin{align}
&(A_1,A_2):=\left(\{(\bm{\sigma},\bm{\tau})\in \bm{F}_3\}, \left\{S_{[0,l]}=\pi(x;l),\tau_{\Oi}\wedge(l+\tau_{\ball{x}{l/4}}\circ\theta_l)>N\right\}\right);\\
&(E_1,E_2):=(E^{\delta,\rho}(x),\{\Oi\cap (A(x;l/4,l)\cup \pi(x;l))=\emptyset\}),
\end{align}
where $l+\tau_{\ball{x}{l/4}}\circ\theta_l$ is the first hitting time to $\ball{x}{l/4}$ after time $l$. Let us explain the difference from the previous case {$0\not\in\ball{x}{l/2}$}. For the environment, we need to keep $\pi(x;l)$ empty, which has a cost of $e^{-cl}$, but is negligible compared with the original gain $e^{cl^d}$ in~\eqref{GainInsideTypical}. 

For the random walk, {only the first crossing, that is $S_{[0,\tau_1]}$ in this case, is switched differently}. 
In the present case, note that $v_1\in\partial\ball{x}{l}$ since by the total duration constraint on $\bm{F}_3$, we have $t_1\le 2\delta^{1/d}c_0^2 l^{d+2}<N$ for small $\delta$. We {switch the paths with $\tau_1=t_1, S_{\tau_1}=v_1$ to those} go from 0 to $n\mathbf{e}_1$ following $\pi(x;l)$ {in $l$ steps} and then to go from $n\mathbf{e}_1$ to $v_1$ inside $A(x;l/4,l)$ in {$t_1+2l^2$ steps} afterward. {The probability to follow $\pi(x;l)$ in the first $l$ steps is $(2d)^{-l}$ and combining this with the estimate (see Lemma~\ref{lem:rwest})}
\begin{equation}
 p_{t_1}^{\ball{x}{l}}(0,v_1)\le e^{c_2(\lfloor t_1l^{-2}\rfloor+1)}p_{t_1+2l^2}^{A(x;l/4,l)}(n\mathbf{e}_1,v_1),
\label{switch-start}
\tag{RW5}
\end{equation}
{we obtain the following bound on the switching cost of the first crossing:}
\begin{equation}
\begin{split}
p_{t_1}^{\ball{x}{l}}(0,v_1)
&\le {(2d)^l}
p_l^{\pi(x;l)}(0,n\mathbf{e}_1) e^{c_2(\lfloor t_1l^{-2}\rfloor+1)}
p_{t_1+2l^2}^{A(x;l/4,l)}(n\mathbf{e}_1,v_1)\\
&\le {(2d)^l}e^{c_2(\lfloor t_1l^{-2}\rfloor+1)}
p_{t_1+2l^2+l}^{A(x;l/4,l)\cup \pi(x;l)}(0,v_1).
\end{split}
\label{first-cross}
\end{equation}
{Note that the term $c_2\lfloor t_1l^{-2}\rfloor$ already appeared in~\eqref{F_3|stuv}. The extra cost of $(2d)^l$} is again negligible compared with {the} $e^{cl^d}$ gain from the environment. 

Therefore simply by {setting} $\tilde{t}_1:=t_1+2l^2+l$ {and changing $(\tilde{s}_k,\tilde{t}_k)_{k\ge 2}$ accordingly}, we can {use~\eqref{first-cross} to} follow the same argument as before to extend~\eqref{prob-F_3} to the case {$0\in\ball{x}{l/2}$}. 
\qed

\begin{figure}[h]
 \includegraphics[width=70mm]{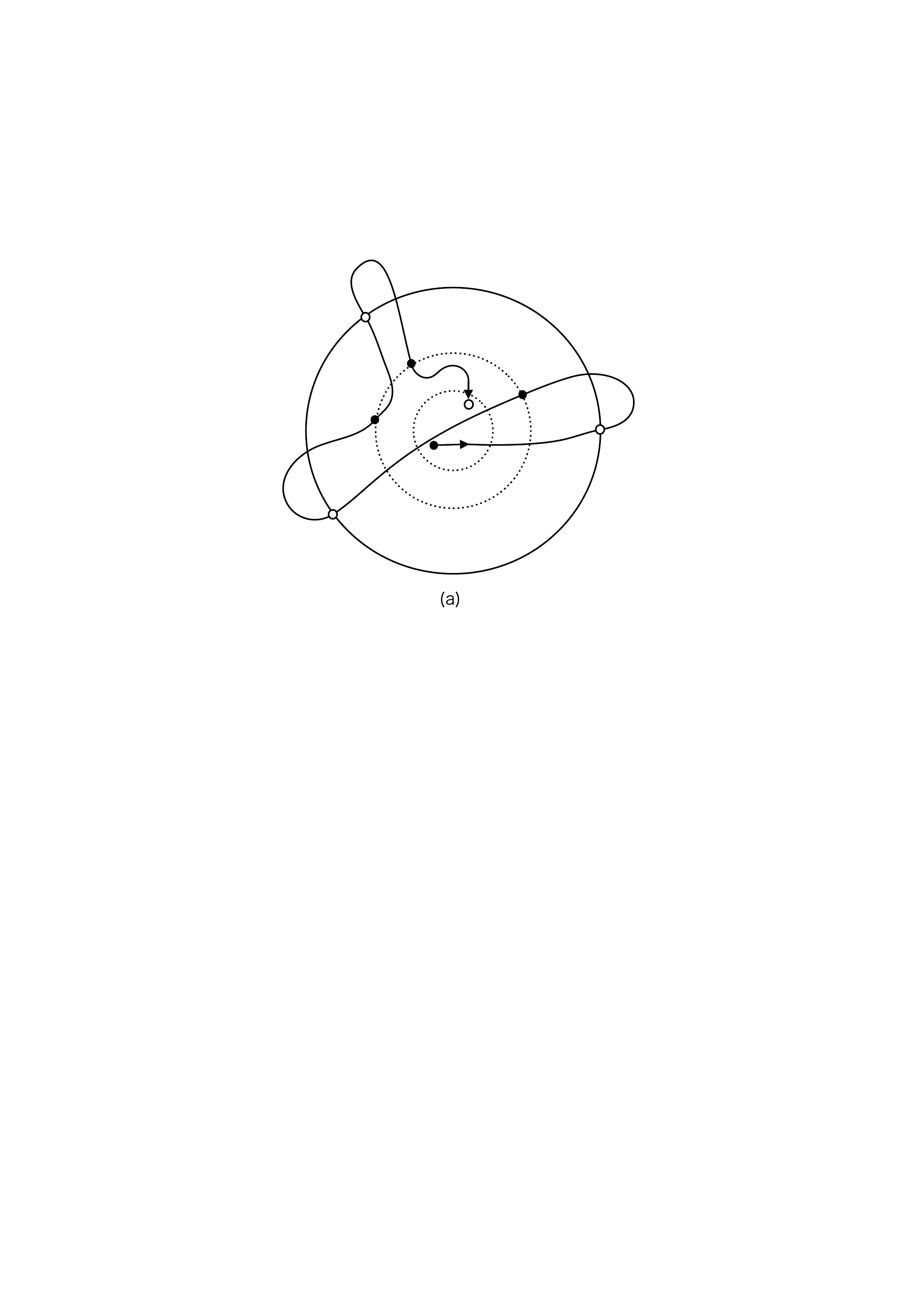}\hspace{10mm}
 \includegraphics[width=70mm]{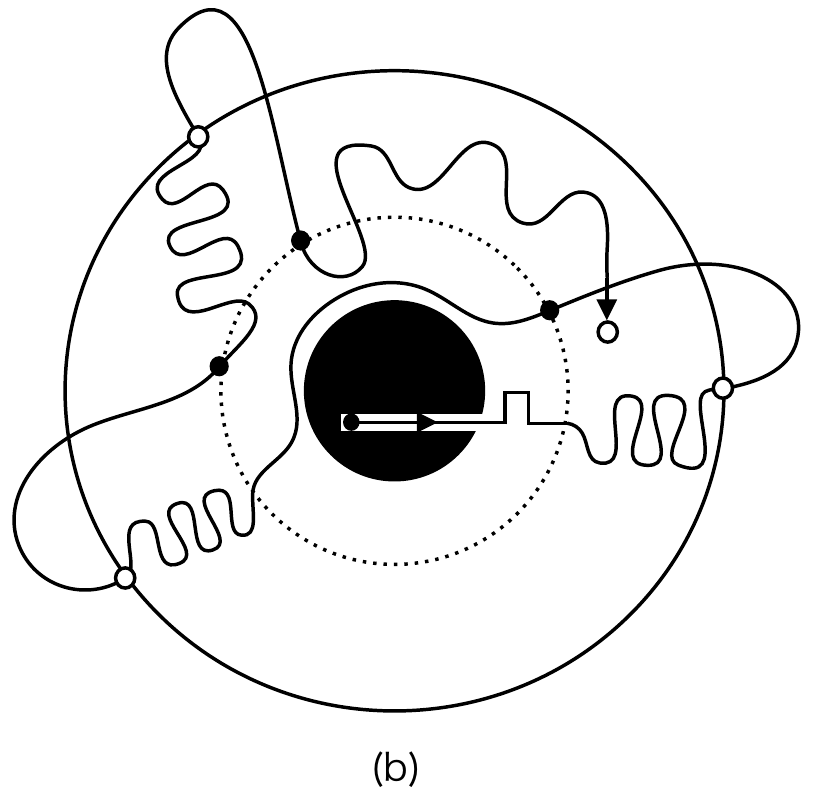} 
\begin{caption}
{A schematic figure of the switching configuration from (a) to (b) in Case (3). The balls are $\ball{x}{l/4}$, $\ball{x}{l/2}$ and $\ball{x}{l}$ from inside. There are 4 crossings from $\ball{x}{l/2}$ to $\ball{x}{l}^c$, including the last incomplete crossing. Both $S_0=0$ and $S_N$ are in $\ball{x}{l/4}$ in (a). The paths from $\bullet$ to $\circ$ are crossings and are lengthened. Observe that we cannot make the second crossing avoid $\ball{x}{l/4}$ without lengthening it as illustrated in (b). The paths from $\circ$ to $\bullet$ are unchanged. The first polygonal segment of the path in (b) represents $\pi(x;l)$.}
\label{fig:case(3)}
\end{caption}
\end{figure}

\subsection{Random walk estimates}\label{SS:tech-proof}
In this subsection, we restate and prove the random walk estimates used in the proof of Lemma~\ref{lem:odensity}. Recall {the notation $\overline{\ball{x}{l}}=\ball{x}{l}\cup\partial\ball{x}{l}$ and} that $p_n(u,v)$ is understood to be $p_{n+1}(u,v)$ if $n+|u+v|_1$ is odd by the convention introduced in Remark~\ref{rem:parity}. 
\bl\label{lem:rwest}
Let ${\cal X}_l^\circ(x)$ and $\Gamma$ be defined as in~\eqref{def:calXo} and~\eqref{eq:Gamma}, respectively. There exist $c_0, c_1, c_2>0$ such that the following hold for all sufficiently large $l$:
\begin{enumerate}
\item Uniformly in $m \ge (c_0l)^2$, {$u \in \ball{x}{l/2}$, $v \in\overline{\ball{x}{l}}$, }
\begin{align}
p^{\ball{x}{l} \setminus {\Oi}}_m(u, v)  
\leq e^{-c_1 (\lfloor m /(c_0l)^2\rfloor{-1}) \Gamma(|{\cal X}_l^\circ|)}p_m^{\ball{x}{l}}(u,v). 
\tag{RW1}
\label{lem:remove-O}
\end{align}
\item Uniformly in $m\leq (c_0l)^2$ and $u\in{\overline{\ball{x}{l/2}}}, v\in \partial\ball{x}{l}$,
\begin{align}
p_m^{\ball{x}{l}}(u, v) 
\leq \frac{1}{100}p_{{m+l^2}}^{\ball{x}{l}}(u,v).
\tag{RW2}
\label{lem:too-short}
\end{align}
\item Uniformly in $m>0$ and $u\in\partial\ball{x}{l/2}, v\in \partial \ball{x}{l}$, 
\begin{align}
p_m^{\ball{x}{l}}(u, v) 
\leq e^{c_2 (ml^{-2}+1)}p_{{m+l^2}}^{A(x;l/4,l)}(u,v).
\label{lem:avoid-center}
\tag{RW3}
\end{align}
\item Uniformly in $m>0$ and $u\in\partial\ball{x}{l/2}, v\in \ball{x}{l/4}$, 
\begin{align}
p_m^{\ball{x}{l}}(u, v)  
\leq e^{c_2 (ml^{-2}+1)}p_{{m+2l^2}}^{A(x;l/4,l)}(u,v+(5l/8)\mathbf{e}_1).
\label{lem:switch-end}
\tag{RW4}
\end{align}
\item Suppose $0\in\ball{x}{l/2}$ and let $n\in\N$ be such that $n\mathbf{e}_1\in\partial\ball{x}{l/2}$. Then uniformly in $m>0$, and $v\in \partial \ball{x}{l}$, 
\begin{align}
p_m^{\ball{x}{l}}(0, v)  
\leq e^{c_2 (ml^{-2}+1)}p_{{m+{2l^2}}}^{A(x;l/4,l)}(n\mathbf{e}_1,v).
\label{lem:switch-start}
\tag{RW5}
\end{align}
\end{enumerate}
\el
In the proof of this lemma, we will use the following estimate on the Dirichlet heat kernel: for any $c\in(0,1)$, there exists $C>0$ such that uniformly in $r\in\N$, $k\in[cr^2,r^2/c]$ and $u,w\in \ball{x}{r}$, 
\begin{equation}
C\le\frac{p_k^{\ball{x}{r}}(u,w)}{r^{-d-2}{\rm dist}(u,\partial\ball{x}{r}){\rm dist}(w,\partial\ball{x}{r})}
\le \frac{1}{C}.\label{LL10}
\end{equation}
This can be found for example in~\cite[Proposition~6.9.4]{LL10}, where it is stated only for the case $k=r^2$ but the argument therein can easily be adapted to the above uniform estimate.

The first assertion~\eqref{remove-O} will be a direct consequence of the following lemma.

\bl\label{lem:calXo}
For any $c_0\in(0,1)$, there exists $c_1>0$ independent of $\Oi$ such that for any $l\in\N$ and $u,w\in\ball{x}{l}$,
\begin{equation}
p^{\ball{x}{l}\setminus \Oi}_{(c_0l)^2}(u,w)
\le e^{-c_1\Gamma(|\mathcal{X}_l^\circ|))} p^{\ball{x}{l}}_{(c_0l)^2}(u,w).
\label{eq:calXo}
\end{equation}
\el
\begin{remark}
 This lemma holds for arbitrary $c_0\in(0,1)$. In Lemma~\ref{lem:rwest}, it is only~\eqref{lem:too-short} which imposes a restriction on $c_0$. 
\end{remark}

\noindent{\bf Proof of Lemma \ref{lem:calXo}.}
We write $l_0$ for $c_0l$ in this proof to ease the notation. It suffices to prove
\be\label{eq:calXo2}
\min_{u, {w}\in \ball{x}{l}} \bP_u\big(\tau_\Oi \leq l_0^2 \,\big|\, S_{l_0^2}={w}, \tau_{\ball{x}{l}^c}>l_0^2\big)
\geq c_1\Gamma(|{\cal X}_l^\circ|).
\ee
for some $c_1>0$, since $1-\lambda \le e^{-\lambda}$. 
The proof of this relies on the so-called \emph{second moment method}. Let us introduce
\begin{equation}
 T:=\sum_{m\in [{l_0^2}/{4},{3l_0^2}/{4}]}
1_{\{S_m\in \mathcal{X}_l^\circ\}}.
\end{equation}
We will show the following:
\begin{align}
&\bE_u\left[T \relmiddle| S_{l_0^2}={w}, \tau_{\ball{x}{l}^c}>l_0^2\right]
\ge c|\mathcal{X}_l^\circ|l_0^{2-d},\label{1stmoment}\\
& \bE_u\left[T^2 \relmiddle| S_{l_0^2}={w}, \tau_{\ball{x}{l}^c}>l_0^2\right]
\le {\frac{C(|\mathcal{X}_l^\circ|l_0^{2-d})^2}{\Gamma(|\mathcal{X}_l^\circ|)}}.
\label{2ndmoment}
\end{align}
Given these bounds, the desired bound follows via the Paley--Zygmunnd inequality as
\begin{equation}
\begin{split}
&\bP_u\left(\tau_\Oi\le l_0^2 \relmiddle| S_{l_0^2}={w}, \tau_{\ball{x}{l}^c}>l_0^2\right)\\
&\quad \ge \bP_u\left(T>0 \relmiddle| S_{l_0^2}={w}, \tau_{\ball{x}{l}^c}>l_0^2\right)\\
&\quad \ge \frac{\bE_u\left[T \relmiddle| S_{l_0^2}={w}, \tau_{\ball{x}{l}^c}>l_0^2\right]^2}
 {\bE_u\left[T^2 \relmiddle| S_{l_0^2}={w}, \tau_{\ball{x}{l}^c}>l_0^2\right]}.
\end{split}
\end{equation}
\noindent\underline{First moment~\eqref{1stmoment}}:
Note first that uniformly in $m\in [{l_0^2}/{4},{3l_0^2}/{4}]$, $z\in \ball{x}{l/2}$ and $u,{w}$ as in the statement,
\begin{equation}
\begin{split}
 \bP_u\left(S_m=z,S_{l_0^2}={w}, \tau_{\ball{x}{l}^c}>l_0^2\right)
&= p_m^{\ball{x}{l}}(u,z)p_{l_0^2-m}^{\ball{x}{l}}(z,{w})\\
&\ge \frac{c}{l_0^d}p^{\ball{x}{l}}_{l_0^2}(u,{w}).
 \label{visit-in-mid}
\end{split}
\end{equation}
Indeed, {by~\eqref{LL10},} there exists ${C}>0$ such that uniformly in $m,u,{w}$ and $z$ as above, 
\begin{align}
p_m^{\ball{x}{l}}(u,z)&\ge \frac{C}{l_0^{d+1}}{{\rm dist}(u,\partial\ball{x}{l})},\\
{p^{\ball{x}{l}}_{l_0^2-m}(z,w)}&{\ge \frac{C}{l_0^{d+1}}{\rm dist}(w,\partial\ball{x}{l})},
\end{align}
and
\begin{equation}
{p^{\ball{x}{l}}_{l_0^2}(u,{w})\le \frac{1}{Cl_0^{d+2}}{\rm dist}(u,\partial\ball{x}{l}){\rm dist}(w,\partial\ball{x}{l}).}
\end{equation}
{The bound~\eqref{visit-in-mid} follows from these three bounds.} Summing~\eqref{visit-in-mid} over $m\in [{l_0^2}/{4},{3l_0^2}/{4}]$ and $z\in \mathcal{X}_l^\circ$ yields the following equivalent of~\eqref{1stmoment}: 
\begin{equation}
\begin{split}
&\bE_u\left[T\colon S_{l_0^2}={w}, \tau_{\ball{x}{l}^c}>l_0^2\right]\\
&\quad \ge c|\mathcal{X}_l^\circ|l_0^{2-d}\bP_u\left(S_{l_0^2}={w}, \tau_{\ball{x}{l}^c}>l_0^2\right).
\end{split}
\end{equation}
\noindent\underline{Second moment~\eqref{2ndmoment}}:
We begin with
\begin{equation}
\label{binomial}
\begin{split}
&\bE_u \left[T^2\colon S_{l_0^2} = {w},\tau_{\ball{x}{l}^c}>l_0^2\right]\\
&=2\sum_{l_0^2/4 \leq i\leq j\le 3l_0^2/4}\sum_{z_1,z_2 \in \mathcal{X}_l^\circ}\bP_u\left(S_{i} = z_1,
S_{j} = z_2,S_{l_0^2} = {w},\tau_{\ball{x}{l}^c}>l_0^2\right)\\
&= 2\sum_{\substack{ k, m\ge l_0^2/4,\\k+m \le 3l_0^2/4}}\sum_{z_1,z_2 \in \mathcal{X}_l^\circ}p^{\ball{x}{l}}_k(u,z_1)p^{\ball{x}{l}}_{m}(z_1,z_2) p^{\ball{x}{l}}_{l_0^2-k-m}(z_2,{w}).\\
\end{split}
\end{equation}
It follows from{~\eqref{LL10} as before} that for the parameters appearing above, 
\begin{equation}
 p^{\ball{x}{l}}_k(u,z_1)p^{\ball{x}{l}}_{l_0^2-k-m}(z_2,v)\le \frac{c}{l_0^d} p^{\ball{x}{l}}_{l_0^2}(u,v).
\end{equation}
Substituting this into~\eqref{binomial} and performing the summation over $z_2$ and $k$, we find that
\begin{equation}
\begin{split}
 &\bE_u \left[T^2\colon S_{l_0^2} = v,\tau_{\ball{x}{l}^c}>l_0^2\right]\\
 &\quad\le \frac{c}{l_0^{d-2}} p^{\ball{x}{l}}_{l_0^2}(u,v)
 \sum_{m\le l_0^2/2}\sum_{z \in \mathcal{X}_l^\circ}
 \bP_z(S_m \in \mathcal{X}_l^\circ).
\end{split}
\label{XtoX}
\end{equation}
For the probability appearing in the summation, we claim
\begin{equation}
\sup_{z \in \mathcal{X}_l^\circ} \bP_z(S_m \in \mathcal{X}_l^\circ) 
\le c
\begin{cases}
 m^{-d/2},&m < l_0^{1/d},\\
 l_0^{-1/2}, &m\in [{l_0}^{1/d},|\mathcal{X}_l^\circ|^{2/d}{l_0}^{1/d}],\\
 |\mathcal{X}_l^\circ| m^{-d/2}, &m\in (|\mathcal{X}_l^\circ|^{2/d}{l_0}^{1/d}, l_0^2/2].
\end{cases}
\label{fall-in-X}
\end{equation}
Substituting this bound {into}~\eqref{XtoX}, we obtain
\begin{equation}
 \bE_u\left[T^2 \colon S_{l_0^2}=v, \tau_{\ball{x}{l}^c}>l_0^2\right]
\le {\frac{C(|\mathcal{X}_l^\circ|l_0^{2-d})^2}{\Gamma(|\mathcal{X}_l^\circ|)}}
p^{\ball{x}{l}}_{l_0^2}(u,v)
\end{equation}
which is equivalent to~\eqref{2ndmoment}. 

It remains to show~\eqref{fall-in-X}. First, the \emph{on-diagonal} term $\bP_z(S_m=z)\le cm^{-d/2}$ is always smaller than the right-hand side of~\eqref{fall-in-X}. Henceforth, we shall focus on the points $w\in\mathcal{X}_l^\circ\setminus\{z\}$ which are at least $l^{1/2d}$ away from $z$. By a standard Gaussian estimate on the transition probability of the symmetric simple random walk~\cite[Theorem~6.28]{B17},
\begin{equation}
\begin{split}
 & \sum_{w\in\mathcal{X}_l^\circ\setminus\{z\}}\bP_z(S_m=w)\\
 &\quad \le \sum_{n=1}^{\infty} \sum_{w\in \mathcal{X}_l^\circ\cap A(z;n l^{1/2d},(n+1)l^{1/2d})}\frac{C}{m^{d/2}}\exp\left\{-\frac{|w-z|^2}{Cm}\right\}.
\end{split}
\end{equation}
This right-hand side is maximized when {the} annuli are filled from inside out but since the points in $\mathcal{X}_l^\circ$ are at least $l^{1/2d}$ away from each other, the $n$-th annulus contains at most $Cn^{d-1}$ points. This leads us to the bound
\begin{equation}
 \sum_{w\in\mathcal{X}_l^\circ\setminus\{z\}}\bP_z(S_m=w)
 \le \frac{C}{m^{d/2}} \sum_{n=1}^{C|\mathcal{X}_l^\circ|^{1/d}} 
n^{d-1}\exp\left\{-\frac{n^2l^{1/d}}{Cm}\right\}.  
\end{equation}
The desired bound~\eqref{fall-in-X} follows by a simple computation considering the cases $m<l^{1/d}$, $m\in [l^{1/d},|\mathcal{X}_l^\circ|^{2/d}l^{1/d}]$ and $m> |\mathcal{X}_l^\circ|^{2/d}l^{1/d}$ separately. \qed

\noindent{\bf Proof of Lemma \ref{lem:rwest}.}
{We only consider the case when $l, 5l/8$ and $n$ are all even.} 

The first assertion~\eqref{lem:remove-O} follows immediately from Lemma~\ref{lem:calXo}. Indeed, using \eqref{eq:calXo} in the Chapman--Kolmogorov identity, we have
\begin{equation}
\begin{split}
 p^{\ball{x}{l} \setminus \Oi}_m(u, v)
& =\sum_{w\in\ball{x}{l}} 
 p^{\ball{x}{l} \setminus \Oi}_{(c_0l)^2}(u,w)
 p^{\ball{x}{l} \setminus \Oi}_{m-(c_0l)^2}(w, v)\\
& \le e^{-c\Gamma(|\mathcal{X}_l^\circ|)}
 \sum_{w\in\ball{x}{l}} 
 p^{\ball{x}{l}}_{(c_0l)^2}(u,w)
 p^{\ball{x}{l} \setminus \Oi}_{m-(c_0l)^2}(w,v)
\end{split}
\end{equation}
and~\eqref{remove-O} follows by iteration. 

Let us proceed to prove the second assertion~\eqref{lem:too-short}. Note first that by a standard Gaussian heat kernel bound~\cite[Theorem~6.28]{B17}, for any $u,w\in \ball{x}{l}$ with $|u-w|\ge cl$, 
\begin{equation}
\begin{split}
p_k^{\ball{x}{l}}(u,w)
&\le p_k^{\Z^d}(u,w)\\
&\le C k^{-d/2}\exp\left\{-\frac{|u-w|^2}{C k}\right\}\\
&\le Cl^{-d} \left(\frac{l^2}{k}\right)^{d/2}\exp\left\{-c\frac{l^2}{k}\right\}.
\end{split}
\label{Gaussian-upper}
\end{equation}

On the other hand, for $m\in[l^2,2l^2]$, {$u\in\overline{\ball{x}{l/2}}$ and $w\in\partial\ball{x}{3l/4}$}, we have
\begin{equation}
p^{\ball{x}{l}}_m(u,w)
\ge {Cl^{-d}}
\label{uniform-lower}
\end{equation}
{by~\eqref{LL10}.}
Combining this with~\eqref{Gaussian-upper}, we obtain the comparison
\begin{equation}
\max_{k\le (c_0l)^2}
{\max_{u\in\overline{\ball{x}{l/2}},w\in\partial\ball{x}{3l/4}}}
\frac{p^{\ball{x}{l}}_k(u,w)}{p^{\ball{x}{l}}_{k+l^2}(u,w)}\le \frac{1}{100}
\label{too-short2}
\end{equation}
for sufficiently small $c_0$. Suppose $m\le (c_0l)^2$ for $c_0$ satisfying the above. By decomposing the random walk path upon the last visit to $\partial\ball{x}{3l/4}$, we get 
\begin{equation}
\begin{split}
 p^{\ball{x}{l}}_m(u,v)
& =\sum_{k=1}^m\sum_{w\in\partial\ball{x}{3l/4}} p_k^{\ball{x}l}(u,w)
 p_{m-k}^{A(x;3l/4,l)}(w,v)\\
& \le \frac{1}{100}
\sum_{k=1}^m\sum_{w\in\partial\ball{x}{3l/4}} p_{k+l^2}^{\ball{x}l}(u,w)
 p_{m-k}^{A(x;3l/4,l)}(w,v)\\
&= \frac{1}{100} p^{\ball{x}{l}}_{m+l^2}(u,v).
\end{split}
\end{equation}
This concludes the proof of~\eqref{too-short}. 

The proofs of~\eqref{lem:avoid-center}--\eqref{lem:switch-start} rely on the fact that there exists $c_2>0$ such that for any $m\in\N$, $u\in{\partial\ball{x}{l/2}}$ and $w\in A(x;3l/8,7l/8)$, 
\begin{equation}
p^{A(x;l/4,l)}_{m+l^2}(u,w)
 \ge \exp\{-c_2(\lfloor ml^{-2}\rfloor+1)\}p_m^{\ball{x}{l}}(u,w).
\label{avoid-center2}
\end{equation}
In order to prove this, we bound the left-hand side from below by the probability that $S_{[0,l^2]}\subset A(x;l/4,l)$, $S_{[l^2,m+l^2]}\subset\ball{w}{l/8}$ and $S_{kl^2}\in\ball{w}{l/16}$ for each $k\in\{1,2,\dotsc,\lfloor ml^{-2}\rfloor\}$, which can be written as
\begin{equation}
\begin{split}
\sum_{z_1,\dotsc,z_{\lfloor ml^{-2}\rfloor}\in\ball{w}{l/16}}
p_{l^2}^{A(x;l/4,l)}(u,z_1)
\left(\prod_{j=1}^{\lfloor ml^{-2}\rfloor-1}p_{l^2}^{\ball{w}{l/8}}(z_j,z_{j+1})\right)\\
\times p_{m-\lfloor ml^{-2}\rfloor l^2+l^2}^{\ball{w}{l/8}}(z_{\lfloor ml^{-2}\rfloor},w).
\end{split}
\end{equation}
Since $m-\lfloor ml^{-2}\rfloor l^2+l^2\in[l^2,2l^2]$ and all the points $u$, $z_j$'s and $w$ are at least $l/16$ away from the corresponding boundaries, by~\eqref{LL10}, all the heat kernels appearing in this expression is bounded from below by $Cl^{-d}$ regardless where $z_j$'s are in $\ball{w}{l/16}$. Therefore we find the bound
\begin{equation}
p^{A(x;l/4,l)}_{m+l^2}(u,w)
 \ge \exp\{-c_2(\lfloor ml^{-2}\rfloor+1)\}l^{-d} 
\end{equation}
for some $c_2>0$. Recalling~\eqref{Gaussian-upper}, we have $p_m^{\ball{x}{l}}(u,w)\le cl^{-d}$ and we conclude the proof of~\eqref{avoid-center2}.

Given~\eqref{avoid-center2}, the third bound~\eqref{lem:avoid-center} can be proved in the same way as in the {proof of~\eqref{lem:too-short}} via the last visit decomposition. In order to prove the {bound}~\eqref{lem:switch-end}, we first replace $m$ by $m+l^2$ and choose $u\in \partial\ball{x}{l/2}$ and $w=v+(5l/8)\mathbf{e}_1$ for $v\in\ball{x}{l/4}$ in~\eqref{avoid-center2} {to obtain}
\begin{equation}
\begin{split}
&\exp\{c_2(\lfloor ml^{-2}\rfloor+1)\} p^{A(x;l/4,l)}_{m+l^2}(u,v+(5l/8)\mathbf{e}_1) \\
&\quad \ge  p^{\ball{x}{l}}_{m+l^2}(u,v+(5l/8)\mathbf{e}_1).
\end{split}
\end{equation}
We can further bound the right-hand side from below by $cp^{\ball{x}{l}}_m(u,v)$ using either~\eqref{too-short2} ($m\le (c_0l)^2$) or the parabolic Harnack inequality from~\cite{D99} ($m>(c_0l)^2$). This yields~\eqref{lem:switch-end} by making $c_2$ larger. The proof of~\eqref{lem:switch-start} is almost the same and left to the reader. 
 \qed


\section{Random Walk Range and ``Truly''-Open Sites}\label{S:Topen}
In this section, we prove various properties of $\cal T$ and its relation with the random walk range $S_{[0,N]}$, which will pave the way for the proof of Theorems~\ref{th:cover} and~\ref{th:Tiboundary}. First, we prove Lemma~\ref{lem:TinS} in Subsection~\ref{SS:TinS}, which shows that the random walk must visit the interior of $\Ti$, and sites in $\Ti$ are well-approximated by sites in $S_{[0,N]}$. We then explain in Subsection~\ref{SS:Tiboundaryw} how Lemma~\ref{lem:SinTi}, Proposition~\ref{prop:Tiboundaryw}, and the deduction of Theorem~\ref{th:Tiboundary} from Theorem~\ref{th:Ticover} all follow from the same key Lemma~\ref{lem:Glbd} on the probability of visiting certain sites that are costly for survival. The proof of Lemma~\ref{lem:Glbd} is then given in Subsection~\ref{SS:Glbd-proof} using path decomposition and switching, with the basic setup presented earlier in Section~\ref{SS:path-decomp}.
%

\subsection{Proof of Lemma~\ref{lem:TinS}}
\label{SS:TinS}
In this subsection, we give the proof of Lemma~\ref{lem:TinS}. First we show that ``truly''-open sites are rare in the following sense.
\bl[``truly''-open sites are rare] \label{lem:trare}
For any $v\in \Z^d$  
and all sufficiently large $N$, 
\be
\P(v \mbox{ is ``truly''-open}) \leq \exp\left\{- (\log N)^2\right\}.
\ee
\el
\noindent{\bf Proof of Lemma~\ref{lem:trare}.}
Recall that $x\in \Z^d$ is ``truly''-open if
\begin{equation}
 \bP_x\left(\tau_\Oi > (\log N)^5\right) \geq \exp\left\{-(\log N)^2\right\}.
\end{equation}
By using Donsker--Varadhan's asymptotics~\eqref{eq:surv}, we obtain
\begin{equation}
\begin{split}
&  \P\left(\bP_x\left(\tau_\Oi > (\log N)^5\right) \geq \exp\left\{-(\log N)^2\right\}\right)\\
 &\quad \le \exp\left\{(\log N)^2\right\}\E\left[\bP\left(\tau_{\Oi}>(\log N)^5\right)\right]\\
 &\quad \le \exp\left\{(\log N)^2\right\} \exp\left\{-c(\log N)^{\frac{5d}{d+2}}\right\}.
\end{split}
\end{equation}
Since the power of $5d/(d+2)>2$ for $d\ge 2$, we are done.
\qed

\noindent\textbf{Proof of Lemma~\ref{lem:TinS}.}
The proof of the first assertion~\eqref{eq:TinS} is simple. Indeed, since a ``truly''-open site is open by definition, for any site in
\begin{equation}
 \left\{x\in\Ti\colon {\rm dist}(x,\partial\Ti)\ge(\log N)^5\right\}, 
\end{equation}
its $(\log N)^5$ neighborhood is free of obstacles. Therefore~\eqref{eq:TinS} follows from Lemma~\ref{lem:hit-centers}. 

The second assertion~\eqref{eq:Ti2S} can be restated as
\begin{equation}
\lim_{N\to\infty} \mu_N \left(\bigcap_{w\in\Ti}\{ \tau_{\ball{w}{(\log N)^5}}\le N\}\right) =1.
\label{eq:Ti2S-restate}
\end{equation}
We are going to show that for any $w\in\ball{0}{3\varrho_N}$, 
\begin{equation}
\mu_N\left(w\textrm{ is ``truly''-open}, \tau_{\ball{w}{(\log N)^5}}> N\right)
\le\exp\left\{-\left(\log N\right)^2\right\}, 
\end{equation} 
from which~\eqref{eq:Ti2S-restate} follows by the union bound. But since whether $w$ is ``truly''-open or not depends only on the configuration of obstacles inside $\ball{w}{(\log N)^5}$ and hence is independent of $\bP(\tau_{\ball{w}{(\log N)^5}}\wedge\tau_\Oi> N)$, we have  
\begin{equation}
\begin{split}
&\P\otimes\bP\left(w\textrm{ is ``truly''-open}, \tau_{\ball{w}{(\log N)^5}}\wedge\tau_\Oi> N\right)\\
&\quad =\P(w\textrm{ is ``truly''-open})\E\left[\bP\left(\tau_{\ball{w}{(\log N)^5}}\wedge\tau_\Oi> N\right)\right]\\
&\quad \le \exp\left\{-(\log N)^2\right\}\P\otimes\bP(\tau_\Oi>N)
\end{split}
\end{equation}
by using Lemma~\ref{lem:trare}.
\qed

\subsection{Proof of Lemma~\ref{lem:SinTi}, Proposition~\ref{prop:Tiboundaryw} and Theorem~\ref{th:Tiboundary}} \label{SS:Tiboundaryw}
The proof of Lemma~\ref{lem:SinTi} and Proposition~\ref{prop:Tiboundaryw} turn out to be quite similar, both involving random walk path switching to avoid sites that are costly for survival. As explained in Subsection~\ref{SS:outline}, to bound $|\partial\Ti|$ and prove Proposition~\ref{prop:Tiboundaryw}, it suffices to give an upper bound on the expected total number of visits to $\bigcup_{x\in\partial\Ti}\ball{x}{(\log N)^6}$, as well as a uniform lower bound on the expected number of visits to $\ball{x}{(\log N)^6}$ over all $x\in\partial\Ti$. 
More precisely, define 
\be
G_{{\Oi}}(u, x) := \bE_u\left[\sum_{n=0}^{\tau_{{\Oi}}} 1_{S_n\in \ball{x}{(\log N)^6}}\right], \quad u\in \Ti, x\in \partial \Ti,
\label{eq:Green}
\ee
which is the expected number of visits to $\ball{x}{(\log N)^6}$ before the walk is killed. We also introduce the set where the above expected number of visits is too small: 
\begin{equation}
 \Li(l):=\bigcup_{x\in \partial\Ti, G_{{\Oi}}(\mathcal{x}_N,x)\le \varrho_N^{1-d}\varphi(N,l)}\ball{x}{(\log N)^6},
\label{eq:defL}
\end{equation}
where
\begin{equation}
 \varphi(N,l):=
\begin{cases}
\varrho_N^{-c_4\epsilon},&\textrm{if }l=\epsilon\varrho_N,\\
(\log N)^{-c_5},&\textrm{if }l=\varrho_N/\log N
\end{cases}
\end{equation}
with $c_4\in(0,1)$ and $c_5>0$ to be chosen later.

We first claim that the expected number of visits to the neighborhood of $\partial\Ti$ is not too large.
\bl \label{lem:Gubd}
There exists $c_6>0$ such that
\be
\sum_{x\in \partial \Ti}  G_{{\Oi}}(\mathcal{x}_N, x) \leq (\log N)^{c_6}.
\label{eq:Gubd}\ee
\el

We will then show that on the event of confinement in $\ball{\mathcal{x}_N}{\varrho_N+l}$ and $\ball{\mathcal{x}_N}{\varrho_N-l/4}$ being free of obstacles, the probability for the random walk to visit $\Ti^c$ or $\Li(l)$ is asymptotically negligible. 
\bl \label{lem:Glbd}
There exists $\epsilon_0>0$ such that the following holds: let $l:=\epsilon\varrho_N$ with $\epsilon\le \epsilon_0$ or $l:=\varrho_N/\log N$, and assume that
\begin{equation}
\lim_{N\to\infty} \mu_N\left(
\tau_{\ball{\mathcal{x}_N}{\varrho_N+l}}>N,
\Oi\cap\ball{\mathcal{x}_N}{\varrho_N-l/4}=\emptyset
\right) = 1.
\label{eq:assum-Glbd} 
\end{equation}
Then
\begin{equation}
\lim_{N\to\infty} \mu_N\left(
\tau_{\Ti^c\cup
\Li(l)}\le N
\right) = 0.
\label{eq:Glbd}
\end{equation}
\el
Let us present three consequences of these two lemmas before giving proofs.

\medskip
\noindent
\textbf{Proof of Lemma~\ref{lem:SinTi}.}
Since Theorem~\ref{th:confine} and Proposition~\ref{prop:Ticoverw} imply 
\begin{equation}
\lim_{N\to\infty} \mu_N\left(\tau_{\ball{\mathcal{x}_N}{(1+\epsilon)\varrho_N}}>N, \Oi\cap\ball{\mathcal{x}_N}{(1-\epsilon/4)\varrho_N}=\emptyset\right) = 1
\end{equation}
for any $\epsilon>0$, Lemma~\ref{lem:SinTi} immediately follows from Lemma~\ref{lem:Glbd} with $l=\epsilon\varrho_N$.\qed 

\medskip
\noindent
\textbf{Proof of Proposition~\ref{prop:Tiboundaryw}.}
By Lemma~\ref{lem:TinS} (cf.~\eqref{eq:Ti2S-restate}), we know that the random walk does visit $\ball{v}{(\log N)^6}$ for each $v\in\partial\Ti$, and together with Lemma~{\ref{lem:Glbd}}, this implies that we must have $\Li(\epsilon\varrho_N)=\emptyset$. This means that we have a uniform lower bound $\min_{x\in\partial\Ti}G_{{\Oi}}(\mathcal{x}_N,x)\ge \varrho_N^{1-d-c_4\epsilon}$ and hence
\begin{equation}
 \sum_{x\in \partial \Ti}  G_{{\Oi}}(\mathcal{x}_N, x) \ge |\partial\Ti|\varrho_N^{1-d-c_4\epsilon}.
\end{equation}
Combining with Lemma~\ref{lem:Gubd}, we conclude that $|\partial\Ti|\le \varrho_N^{d-1+c_4\epsilon}(\log N)^{c_6}$, and since $\epsilon>0$ can be taken arbitrarily small, Proposition~\ref{prop:Tiboundaryw} follows.\qed 

\medskip
\noindent
\textbf{Proof of Theorem~\ref{th:Tiboundary} assuming Theorem~\ref{th:Ticover}.}
Observe that once we have proved Theorem~\ref{th:Ticover}, we may take $l=\varrho_N/\log N$ in Lemma~\ref{lem:Glbd}. Then the same argument as above yields Theorem~\ref{th:Tiboundary}.\qed

\medskip

We close this subsection with the proof of Lemma~\ref{lem:Gubd} which is fairly simple. The proof of Lemma~\ref{lem:Glbd} is much more involved and will take up the next two subsections.

\medskip
\noindent\textbf{Proof of Lemma~\ref{lem:Gubd}.}
Let us define the stopping times 
\begin{equation}
\xi_1:=\inf\left\{n\ge 0\colon {\rm dist}(S_n,\partial\Ti)<(\log N)^6\right\}
\end{equation}
and for $k\ge 1$, 
\begin{equation}
 \xi_{k+1}:=\inf\left\{n\ge \xi_k+2(\log N)^{10}\colon {\rm dist}(S_n,\partial\Ti)<(\log N)^6\right\}.
\end{equation}
We can then bound the left-hand side of~\eqref{eq:Gubd} by
\begin{equation}
\sum_{x\in {\partial\Ti}}G_{{\Oi}}(\mathcal{x}_N,x)\le (\log N)^C\bE\left[\max\{k\colon \xi_k<\tau_{\Oi}\}\right]
\end{equation}
for some $C>0$. Observe that whenever the random walk visits $(\log N)^6$ neighborhood of $\partial\Ti$, there is more than $c(\log N)^{-6d}$ probability of exiting $\Ti$ within the next $(\log N)^{12}$ steps by the local central limit theorem. And once the random walk exits $\Ti$, it will hit $\Oi$ in the next $(\log N)^{12}$ steps with high probability by the definition of $\Ti$. Therefore $\max\{k\colon \xi_k<\tau_{\Oi}\}$ is stochastically dominated by a geometric random variable with parameter $c(\log N)^{-6d}$ and the desired bound follows. \qed

\subsection{Path decomposition}
\label{SS:path-decomp}
In order to prove Lemma~\ref{lem:Glbd}, what will be relevant is the behavior of the random walk near $\partial\Ti$. Since $v\in \Z^d$ is ``truly''-open if its $(\log N)^5$ neighborhood is open and we assume $\Oi\cap\ball{\mathcal{x}_N}{\varrho_N-l/4}=\emptyset$, we know that $\partial\Ti$ lies near $\partial\ball{\mathcal{x}_N}{\varrho_N}$. This motivates us to decompose the random walk paths according to the crossings of a thin annulus near the boundary of the confinement ball $\ball{\mathcal{x}_N}{\varrho_N}$. 

Similarly to~\eqref{eq:PathDecompB}--~\eqref{eq:PathDecompE}, we decompose a random walk path $(S_n)_{0\ge n\le N}$ by using successive crossings between the inner and outer shells of the annulus 
\begin{equation}
A(\mathcal{x}_N;\varrho_N-2l,\varrho_N-l)
=\ball{\mathcal{x}_N}{\varrho_N-l}\setminus \overline{\ball{\mathcal{x}_N}{\varrho_N-2l}},
\end{equation}
where we will choose $l>0$ to be either $\epsilon\varrho_N$ or $\varrho_N/\log N$. To this end, we introduce the stopping times
\begin{align}
\sigma_1 & := \min\{n\geq 0\colon S_n \in {\overline{\ball{\mathcal{x}_n}{\varrho_N-2l}}}\}{\wedge N},\label{eq:sigma_1}
\end{align}
and for $k\in\N$, 
\begin{align}
\tau_k & := \min\{n> \sigma_k \colon S_n \in B(\mathcal{x}_n,\varrho_N-l)^c\}{\wedge N}, \label{eq:tau_k}\\
\sigma_{k+1} & := \min\{n> \tau_k \colon S_n \in {\overline{\ball{\mathcal{x}_n}{\varrho_N-2l}}}\}{\wedge N}.\label{eq:sigma_{k+1}}
\end{align}
In what follows, we will decompose the random walk paths into the pieces $(S_{[\tau_k,\tau_{k+1}]})_{k\ge 1}$ and the role of $(\sigma_k)_{k\ge 1}$ is auxiliary. More precisely,  the paths that visit a costly site $v\in\Ti^c\cup\Li(l)$ (cf.~\eqref{eq:defL}) during ${[\tau_k,\tau_{k+1}]}$ are going to be switched to the paths that stay inside $\ball{\mathcal{x}_N}{\varrho_N-l/2}$ during ${[\tau_k,\tau_{k+1}]}$. 

We use bold face letters to denote sequences of numbers as in Subsection~\ref{SS:odenproof}. For a non-decreasing sequence $\bm{t}=(t_k)_{k\ge 1}$ of integers, by slightly abusing our notation in Subsection~\ref{SS:odenproof}, we write 
\begin{equation}
K({\bm{t}}):=\sup\{k\ge 1\colon {t_{k+1}-t_k}>0\}
\end{equation}
which represents the number of crossings from $\partial\ball{\mathcal{x}_N}{\varrho_N-l}$ to $\partial\ball{\mathcal{x}_N}{\varrho_N-2l}$ and back to $\partial\ball{\mathcal{x}_N}{\varrho_N-l}$ when $\bm{t}=\bm{\tau}$. We have the following control on $K(\bm{\tau})$. 
\begin{lemma}
\label{lem:num-length}
There exists $c_3>0$ depending only on the dimension $d$ such that if
\begin{equation}
\lim_{N\to\infty} \mu_N\left(\tau_{\ball{\mathcal{x}_N}{\varrho_N+l}^c}>N, \Oi\cap\ball{\mathcal{x}_N}{\varrho_N-l/4}=\emptyset\right)=1
\label{eq:assum-confine}
\end{equation}
for some $l\in[\varrho_N/\log N,c_3\varrho_N]$, then
\begin{equation}
\lim_{N\to\infty} \mu_N\left(K({\bm{\tau}})\le Nl^{-2}
\right)=1.
\label{eq:num-length}
\end{equation}
\end{lemma}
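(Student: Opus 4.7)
The plan is a path-switching argument in the spirit of Case~(1) in the proof of Lemma~\ref{lem:Edeltalbd}. The underlying heuristic is that each outward crossing $[\sigma_k,\tau_k]$ moves the walk by distance $l$ in $\Z^d$, which for a simple symmetric random walk typically requires $\Theta(l^{2})$ steps; with a budget of $N$ steps at most $O(Nl^{-2})$ complete crossings can occur, and any excess forces many atypically short crossings, each of which is improbable. By~\eqref{eq:assum-confine} we may work on the event
$$
\tilde G_N:=\left\{\tau_{\ball{\mathcal{x}_N}{\varrho_N+l}^c}>N,\ \Oi\cap\ball{\mathcal{x}_N}{\varrho_N-l/4}=\emptyset\right\},
$$
whose $\mu_N$-probability tends to $1$.

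Fix a small constant $c_0>0$ to be chosen below and set $M:=\lceil 2Nc_0^{-2}l^{-2}\rceil$. On $\tilde G_N\cap\{K(\bm{\tau})\ge M\}$, the bound $\sum_k(\tau_k-\sigma_k)\le N$ combined with Markov's inequality yields at least $M/2$ indices $k$ for which the outward crossing $[\sigma_k,\tau_k]$ has duration $\tau_k-\sigma_k\le (c_0l)^{2}$; call these the \emph{short} crossings. A crucial observation is that throughout each such crossing the walk stays inside $\ball{\mathcal{x}_N}{\varrho_N-l}$, and hence inside the obstacle-free ball $\ball{\mathcal{x}_N}{\varrho_N-l/4}$, so its law coincides with that of the free walk absorbed on $\partial\ball{\mathcal{x}_N}{\varrho_N-l}$.

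The next step is to transfer estimate~\eqref{lem:too-short} to the present geometry: for every $\Oi$ with $\Oi\cap\ball{\mathcal{x}_N}{\varrho_N-l/4}=\emptyset$, every $T\le (c_0l)^{2}$, and every $u\in\overline{\ball{\mathcal{x}_N}{\varrho_N-2l}}$, $v\in\partial\ball{\mathcal{x}_N}{\varrho_N-l}$,
$$
p^{\ball{\mathcal{x}_N}{\varrho_N-l}}_{T}(u,v)\le\tfrac{1}{100}\,p^{\ball{\mathcal{x}_N}{\varrho_N-l}}_{T+l^{2}}(u,v).
$$
The proof copies the last-visit decomposition in the proof of~\eqref{lem:too-short}, but now using the intermediate sphere $\partial\ball{\mathcal{x}_N}{\varrho_N-3l/2}$; the Gaussian upper bound gives a factor $\exp(-c/c_0^{2})$ for the short trip while the local CLT lower bound on the lengthened trip is of order $l^{-d}$, and choosing $c_0$ small enough yields the factor $1/100$. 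The inserted $l^{2}$-loop lives inside $\ball{\mathcal{x}_N}{\varrho_N-l}$, which is obstacle-free, so the estimate is insensitive to $\Oi$.

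Finally, mimic Case~(1): lengthen every short crossing by $l^{2}$ steps and apply~\eqref{eq:switch} with $E_1=E_2=\{\Oi\cap\ball{\mathcal{x}_N}{\varrho_N-l/4}=\emptyset\}$, so that the environment ratio is $1$. Each lengthening multiplies the walk probability by $1/100$; summing over the number $j\ge M/2$ of lengthened crossings, absorbing the combinatorial factor $\le 2^{2j}$ for pre-images, and using $\bP(\tau_\Oi>N+jl^{2})\le\bP(\tau_\Oi>N)$,
$$
\mu_N\!\left(\tilde G_N\cap\{K(\bm{\tau})\ge M\}\right)\le C\sum_{j\ge M/2}25^{-j}\le C'\,25^{-M/2}.
$$
Since $l\le c_3\varrho_N$ implies $M\gtrsim\varrho_N^{d}\to\infty$, the right-hand side is $o(1)$. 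By shrinking $c_3$ to absorb the constant $2c_0^{-2}$ into the hypothesis on $l$, we recover the stated bound $K(\bm{\tau})\le Nl^{-2}$.

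The main obstacle will be the transfer of~\eqref{lem:too-short} above: one must verify that the last-visit decomposition still yields the factor $1/100$ when the ambient ball has large radius $\varrho_N-l$ rather than $l$, and that choosing $c_0$ small (hence $c_3$ small) leaves enough room between the spheres $\partial\ball{\mathcal{x}_N}{\varrho_N-2l}$, $\partial\ball{\mathcal{x}_N}{\varrho_N-3l/2}$, and $\partial\ball{\mathcal{x}_N}{\varrho_N-l}$ for the Gaussian-vs.-local-CLT comparison to go through uniformly.
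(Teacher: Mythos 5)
Your proposal takes a genuinely different route from the paper. The paper proves Lemma~\ref{lem:num-length} by a direct iteration of the strong Markov property: at each $\tau_k$ the walk sits on $\partial\ball{\mathcal{x}_N}{\varrho_N-l}$, and for the $k$-th round trip to be completed before time $N$ on the confinement event $\{\tau_{\ball{\mathcal{x}_N}{\varrho_N+l}^c}>N\}$, the walk must hit $\overline{\ball{\mathcal{x}_N}{\varrho_N-2l}}$ before exiting $\ball{\mathcal{x}_N}{\varrho_N+l}$. That is an annular gambler's-ruin event whose probability is bounded by some $q<1$ uniformly in the starting point once $l\le c_3\varrho_N$ with $c_3$ small. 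Iterating gives $\bP(\tau_{\ball{\mathcal{x}_N}{\varrho_N+l}^c}>N,\,K(\bm{\tau})>Nl^{-2})\le q^{Nl^{-2}-1}$, and since $Nl^{-2}\ge c_3^{-2}\varrho_N^d$, this decays faster than $\P\otimes\bP(\tau_\Oi>N)\approx e^{-c(d,p)\,\varrho_N^d}$ once $c_3$ is small. No path switching, no heat-kernel estimates, and no use of the obstacle-free hypothesis is required — the argument uses only the confinement part of~\eqref{eq:assum-confine}. Your proposal instead adapts Case~(1) of Lemma~\ref{lem:Edeltalbd}: pigeonhole to find many short outward crossings, lengthen each by $l^2$, and pay the combinatorial cost of pre-images. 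This is a workable strategy, but it is heavier machinery than the problem needs.

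The real issue is the step you flag as ``the main obstacle'': the transferred estimate $p^{\ball{\mathcal{x}_N}{\varrho_N-l}}_T(u,v)\le\tfrac1{100}\,p^{\ball{\mathcal{x}_N}{\varrho_N-l}}_{T+l^2}(u,v)$ does not follow from the tools in the paper, and the reason is not just a matter of choosing $c_0$ small. In the proof of~\eqref{lem:too-short} the ambient ball has radius $l$, the inserted waiting time is $l^2$, and the crucial lower bound $p^{\ball{x}{l}}_{m}(u,w)\gtrsim l^{-d}$ for $m\in[l^2,2l^2]$ comes from~\eqref{LL10}, which is stated for the \emph{diffusive} time scale $k\asymp r^2$ where $r$ is the radius of the ambient ball. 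In your setting the ambient ball has radius $R=\varrho_N-l$, so~\eqref{LL10} would require time $\asymp R^2$, whereas the lengthened trip only lasts $T+l^2\le 2l^2\ll R^2$. Worse, even a ``local'' analogue of the lower bound cannot be uniform: for $u\in\overline{\ball{\mathcal{x}_N}{\varrho_N-2l}}$ and $w\in\partial\ball{\mathcal{x}_N}{\varrho_N-3l/2}$, the pair can have $|u-w|$ as large as order $\varrho_N$, and then $p_{T+l^2}(u,w)$ carries a Gaussian factor $e^{-c|u-w|^2/l^2}$ that crushes $l^{-d}$. So the claim that ``the local CLT lower bound on the lengthened trip is of order $l^{-d}$'' is incorrect as stated. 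To salvage the argument you would have to compare the ratio $p_T/p_{T+l^2}$ directly, which amounts to a two-sided Gaussian estimate for the Dirichlet heat kernel near the boundary of a ball of large radius; this is not available in the paper (the parabolic Harnack inequality~\eqref{eq:PHI} would only give a factor $100$ in the wrong direction, not $1/100$). Taking $c_0$, $c_3$ small does not help — shrinking $c_3$ only increases the scale discrepancy between $l$ and $R$.

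The rest of your outline (pigeonhole giving at least $M/2$ short crossings, the $2^{K}\le 4^{j}$ bound on pre-images, and the comparison to $\bP(\tau_\Oi>N+jl^2)\le\bP(\tau_\Oi>N)$) is sound, so the gap is isolated to the heat-kernel step. But since that step is essential and nontrivial to repair, I would not call the proof complete; and given how short the gambler's-ruin argument is, I would recommend switching to that route.
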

\noindent{\bf Proof of Lemma~\ref{lem:num-length}.}
Let us fix $l\in [\varrho_N/\log N,c_3\varrho_N]$ and suppose that $K(\bm{\tau})>Nl^{-2}$, or equivalently, $\sigma_{\lfloor Nl^{-2}\rfloor}<N$. Then we find that
\begin{equation}
\begin{split}
&\bP\left(\tau_{\ball{\mathcal{x}_N}{\varrho_N+l}^c}>N,K(\bm{\tau})> Nl^{-2}\right)\\
&\quad\le \sup_{u\in\partial\ball{\mathcal{x}_N}{\varrho_N-l}}
\bP_u\left(\tau_{\ball{\mathcal{x}_N}{\varrho_N-2l}}<\tau_{\ball{\mathcal{x}_N}{\varrho_N+l}^c}\right)^{Nl^{-2}-1}
\end{split}
\label{eq:many-crossings}
\end{equation}
by using the strong Markov property at each $\tau_k$ with $k\le Nl^{-2}-1$. Since
\begin{equation}
\sup_{u\in\partial\ball{\mathcal{x}_N}{\varrho_N-l}}
\bP_u\left(\tau_{\ball{\mathcal{x}_N}{\varrho_N-2l}}<\tau_{\ball{\mathcal{x}_N}{\varrho_N+l}^c}\right)
\end{equation}
is bounded away from one for all large $N$, by choosing $c_3$ sufficiently small and recalling~\eqref{eq:surv}, we find that the right-hand side of~\eqref{eq:many-crossings} decays faster than $\P\otimes\bP(\tau_\Oi>N)$. \qed

It is also useful to know that the random walk does not end up near the boundary of the confinement ball at time $N$. 
\begin{lemma}
\label{lem:end-inside}
\begin{equation}
 \lim_{\epsilon\to 0}\limsup_{N\to\infty}\mu_N\left(S_N \in \ball{\mathcal{x}_N}{(1-\epsilon)\varrho_N}\right)=1.
\end{equation} 
\end{lemma}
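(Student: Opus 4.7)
The plan is spectral: I would show that under $\mu_N$, conditionally on a typical favorable environment, the endpoint $S_N$ has a conditional density (with respect to counting measure) proportional to a discrete analogue of the principal Dirichlet eigenfunction of the confinement region, which vanishes linearly on $\partial\ball{\mathcal{x}_N}{\varrho_N}$ and is therefore concentrated in the interior.

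First I would reduce to the favorable event. Fix $\epsilon>0$. By combining Theorem~\ref{th:confine} with Proposition~\ref{prop:Ticoverw} (whose covering conclusion~\eqref{confine2} was just extended to all $d\ge 2$), the event
\begin{equation*}
 \mathcal{E}_N := \left\{S_{[0,N]}\subset \ball{\mathcal{x}_N}{\varrho_N+\varrho_N^{\epsilon_1}}\right\}\cap \left\{\Oi\cap \ball{\mathcal{x}_N}{(1-\epsilon/4)\varrho_N}=\emptyset\right\}
\end{equation*}
has $\mu_N$-probability tending to one, so it is enough to show that $\mu_N(S_N\notin \ball{\mathcal{x}_N}{(1-\epsilon)\varrho_N};\mathcal{E}_N)$ vanishes as $\epsilon\to 0$ uniformly in large $N$.

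Next I would carry out a spectral expansion for the endpoint. On $\mathcal{E}_N$, let $\mathcal{D}$ denote the connected component of $\Z^d\setminus\Oi$ containing the origin; by construction $\ball{\mathcal{x}_N}{(1-\epsilon/4)\varrho_N}\subset \mathcal{D}\subset \ball{\mathcal{x}_N}{\varrho_N+\varrho_N^{\epsilon_1}}$. Write $\lambda_\mathcal{D},\phi_\mathcal{D}$ for the principal Dirichlet eigenvalue and $L^2$-normalized eigenfunction of $-\frac{1}{2d}\Delta$ on $\mathcal{D}$. Pick $L:=\lfloor \varrho_N^{2+\kappa}\rfloor$ for a small $\kappa>0$, so $L$ exceeds the inverse spectral gap of $\mathcal{D}$ (of order $\varrho_N^2$) yet is $\ll N\sim\varrho_N^{d+2}$. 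Splitting the walk at time $N-L$ and invoking the eigenvalue/eigenfunction estimates collected in Appendix~\ref{app:eigen}, one should obtain the uniform expansion
\begin{equation*}
 p_L^\mathcal{D}(w,y) = e^{-\lambda_\mathcal{D} L}\phi_\mathcal{D}(w)\phi_\mathcal{D}(y)(1+o(1)), \qquad w,y\in \mathcal{D}.
\end{equation*}
Multiplying by $\bP(S_{N-L}=w,\tau_\Oi>N-L)$, summing over $w$ and normalizing by $\bP(\tau_\Oi>N)$, this yields $\bP(S_N=y\mid \tau_\Oi>N)=(1+o(1))\,\phi_\mathcal{D}(y)/\sum_{y'\in\mathcal{D}}\phi_\mathcal{D}(y')$ uniformly in $y\in\mathcal{D}$.

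Finally, by domain monotonicity of Dirichlet eigenfunctions together with stability under boundary perturbations of scale $\varrho_N^{\epsilon_1}$, the rescaled function $\varrho_N^{d/2}\phi_\mathcal{D}(\mathcal{x}_N+\varrho_N\cdot)$ is close to the continuum eigenfunction $\phi_{\ball{0}{1}}$, which vanishes linearly on $\partial\ball{0}{1}$. Summing the conditional density over $y$ in the annulus $\ball{\mathcal{x}_N}{\varrho_N+\varrho_N^{\epsilon_1}}\setminus \ball{\mathcal{x}_N}{(1-\epsilon)\varrho_N}$ should then give
\begin{equation*}
 \mu_N\bigl(S_N\notin \ball{\mathcal{x}_N}{(1-\epsilon)\varrho_N};\mathcal{E}_N\bigr)\le \frac{\int_{\{1-\epsilon\le|x|\le 1\}}\phi_{\ball{0}{1}}(x)\,dx}{\int_{\ball{0}{1}}\phi_{\ball{0}{1}}(x)\,dx} + o(1),
\end{equation*}
and the right-hand side vanishes as $\epsilon\to 0$ by dominated convergence. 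The hard part will be making the spectral expansion uniform in the random, rough set $\mathcal{D}$: one needs a lower bound of order $\varrho_N^{-2}$ on the spectral gap of $-\frac{1}{2d}\Delta$ on $\mathcal{D}$, together with quantitative $L^\infty$-stability of $\phi_\mathcal{D}$ under boundary perturbations of scale $\varrho_N^{\epsilon_1}$, precisely the kinds of estimates the paper defers to Appendix~\ref{app:eigen}.
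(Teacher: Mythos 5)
Your overall strategy is the right one --- both you and the paper use a spectral expansion together with the spectral-gap and $L^\infty$ estimates of Appendix~\ref{app:eigen} --- but you have set up a harder version of the argument and the key intermediate step you assert is not provided by the Appendix and is not automatic.

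The crucial gap is the claimed uniform intrinsic-ultracontractivity statement
\begin{equation*}
 p_L^\mathcal{D}(w,y) = e^{-\lambda_\mathcal{D} L}\phi_\mathcal{D}(w)\phi_\mathcal{D}(y)\bigl(1+o(1)\bigr) \quad \text{uniformly in } w,y\in\mathcal{D}.
\end{equation*}
After removing the principal term, the remainder $\sum_{k\ge 2}e^{-\lambda^{(k)}L}\phi^{(k)}(w)\phi^{(k)}(y)$ can be bounded above (by Cauchy--Schwarz and the spectral gap), but to absorb it as a multiplicative $o(1)$ you need a matching \emph{lower} bound on $\phi_\mathcal{D}(w)\phi_\mathcal{D}(y)$, which degenerates as $w$ or $y$ approaches $\partial\mathcal{D}$. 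Lemma~\ref{lem:specest} supplies only the $L^\infty$ \emph{upper} bound~\eqref{eq:ef} and the spectral gap~\eqref{eq:ev}; it does not give a boundary lower bound for $\phi_\mathcal{D}$, and establishing one uniformly over the random, possibly rough domain $\mathcal{D}$ is genuinely extra work. A secondary issue: your inclusion $\mathcal{D}\subset\ball{\mathcal{x}_N}{\varrho_N+\varrho_N^{\epsilon_1}}$ is not automatic --- the open cluster containing the origin may extend outside the confinement ball, even when the walk is confined. The paper sidesteps both problems by working with $\Bi = \ball{\mathcal{x}_N}{\varrho_N+\varrho_N^{\epsilon_1}}\setminus\Oi$ and expanding $\bE[f(S_N)\colon\tau_{\Bi^c}>N]$ at time $N$ directly from the origin: in the ratio $\langle\phi^{(1)},1_A\rangle\phi^{(1)}(0)\big/\langle\phi^{(1)},1\rangle\phi^{(1)}(0)$ the factor $\phi^{(1)}(0)$ cancels, the $k\ge 2$ terms die by~\eqref{eq:ev}, and the final bound needs nothing beyond~\eqref{eq:ef} and $\|\phi^{(1)}\|_2=1$, which give $\langle\phi^{(1)},1_A\rangle\le c\epsilon\varrho_N^{d/2}$ and $\langle\phi^{(1)},1\rangle\ge c_8^{-1}\varrho_N^{d/2}$. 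Similarly, your last step invokes $L^1$/pointwise convergence of the rescaled $\phi_\mathcal{D}$ to the continuum ball eigenfunction, which is more than the Appendix provides and more than is needed; the crude $L^\infty$ bound already produces the factor $O(\epsilon)$.
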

\noindent\textbf{Proof of Lemma~\ref{lem:end-inside}.} 
By Theorem~\ref{th:confine} and Proposition~\ref{prop:Ticoverw}, it suffices to show that
\begin{equation}
\begin{split}
 \P\otimes\bP\Bigl(S_N \not\in \ball{\mathcal{x}_N}{(1-\epsilon)\varrho_N}\mid\tau_{\Oi\cup\ball{\mathcal{x}_N}{\varrho_N+\varrho_N^{\epsilon_1}}^c}>N,\\\Oi\cap\ball{\mathcal{x}_N}{(1-\epsilon)\varrho_N}=\emptyset\Bigr)
\end{split}
\end{equation} 
tends to zero as $N\to \infty$ and $\epsilon\to 0$. Let us write $\Bi={\ball{\mathcal{x}_N}{\varrho_N+\varrho_N^{\epsilon_1}}\setminus\Oi}$ in this proof to ease the notation. We use the eigenfunction expansion to get
\begin{equation}
\bE\left[f(S_N)\colon\tau_{{\Bi^c}}>N\right]
 =\sum_{k\ge 1}\left(1-\lambda^{{\rm RW},(k)}_{\Bi}\right)^N\left\langle \phi^{{\rm RW},(k)}_{\Bi},f\right\rangle \phi^{{\rm RW},(k)}_{\Bi}(0)
\label{eq:EFexpansion}
\end{equation}
for any bounded function $f$, where $\lambda^{{\rm RW},(k)}_{\Bi}$ and $\phi^{{\rm RW},(k)}_{\Bi}$ are the $k$-th smallest eigenvalue and corresponding eigenfunction with $\|\phi^{{\rm RW},(k)}_{\Bi}\|_2=1$ for the generator of the random walk killed upon exiting $\Bi$. On the event $\Oi\cap\ball{\mathcal{x}_N}{(1-\epsilon)\varrho_N}=\emptyset$, each of the eigenvalues $\lambda^{{\rm RW},(k)}_{\Bi}$ and eigenfunctions $\phi^{{\rm RW},(k)}_{\Bi}$ ($k\in \N$), after proper rescaling, should be close to the eigenvalues of the Dirichlet Laplacian on the unit ball. Based on this observation, one can in fact prove (see Lemma~\ref{lem:specest} in Appendix~\ref{app:eigen}) that 
\begin{align}
\lambda^{{\rm RW},(2)}_{\Bi}-\lambda^{{\rm RW},(1)}_{\Bi}
&\ge c_7\varrho_N^{-2},
\tag{EV}\label{eq:ev}\\
\left\|\phi^{{\rm RW},(1)}_{\Bi}\right\|_\infty
&\le c_8\varrho_N^{-d/2},
\tag{EF}\label{eq:ef}
\end{align}
which are well-known for the eigenvalues and eigenfunctions for the continuum Laplacian. It follows from~\eqref{eq:ev} that the terms with $k\ge 2$ in \eqref{eq:EFexpansion} are negligible for bounded $f$ by a standard argument, (see, for example, the proof of Lemma~\ref{lem:surv-asymp} in Appendix~\ref{app:eigen}.) By setting $f=1_{\ball{\mathcal{x}_N}{(1-\epsilon)\varrho_N}^c}$ and $f=1$ in~\eqref{eq:EFexpansion}, we find that
\begin{equation}
 \bP\left(S_N \not\in \ball{\mathcal{x}_N}{(1-\epsilon)\varrho_N}\mid\tau_{\Bi^c}>N\right)=\frac{\left\langle \phi^{{\rm RW},{(1)}}_{\Bi},1_{\ball{\mathcal{x}_N}{(1-\epsilon)\varrho_N}^c}\right\rangle}{\left\langle \phi^{{\rm RW},{(1)}}_{\Bi},1\right\rangle}{(1+o(1))}
\label{eq:1st-term}
\end{equation}
as $N\to\infty$. Since~\eqref{eq:ef} and the fact $\phi^{{\rm RW},(1)}_{\Bi}\ge 0$ imply that
\begin{align}
&\left\langle \phi^{{\rm RW},(1)}_{\Bi},1_{\ball{\mathcal{x}_N}{(1-\epsilon)\varrho_N}^c}\right\rangle
\le c\epsilon\varrho_N^{d/2},\\
&\left\langle\phi^{{\rm RW},(1)}_{\Bi},1\right\rangle
\ge c_8^{-1}\varrho_N^{d/2}\|\phi^{{\rm RW},(1)}_{\Bi}\|_2^2=c_8^{-1}\varrho_N^{d/2},
\end{align}
the right-hand side of~\eqref{eq:1st-term} vanishes as $\epsilon\to 0$.
\qed
\begin{remark}
With a little more effort, it is possible to show that the eigenfunction $\phi^{{\rm RW},(1)}_{\Bi}$ converges, after proper rescaling, to the eigenfunction of the Dirichlet Laplacian on the unit ball in $L^2$. See, for example,~\cite{Flucher95} for a further discussion on related problems. 
\end{remark}

\subsection{Proof of Lemma~\ref{lem:Glbd}}
\label{SS:Glbd-proof}
\noindent\textbf{Proof of Lemma~\ref{lem:Glbd}.}
Referring to~\eqref{eq:assum-Glbd} and Lemmas~\ref{lem:num-length} and~\ref{lem:end-inside}, let us fix $\epsilon\in(0,c_3)$, $l\in\{\varrho_N/\log N,\epsilon\varrho_N\}$ and introduce \emph{good events}
\begin{align}
 E_{\ref{eq:assum-Glbd}}&:=\left\{\Oi\cap\ball{\mathcal{x}_N}{\varrho_N-l/4}=\emptyset\right\},\\
 E_{\ref{lem:num-length}}&:=\left\{K(\bm{\sigma},\bm{\tau})\le Nl^{-2}
\right\},\\
 E_{\ref{lem:end-inside}}&:=\left\{\mathcal{x}_N\in \ball{0}{(1-2\epsilon)\varrho_N}, S_N\in \ball{\mathcal{x}_N}{(1-2\epsilon)\varrho_N}\right\} 
\end{align}
and define $E_{\rm good}:=E_{\ref{eq:assum-Glbd}}\cap E_{\ref{lem:num-length}}\cap E_{\ref{lem:end-inside}}$, for which we have $\lim_{N\to\infty}\mu_N(E_{\rm good})= 1$. We are going to prove 
\begin{equation}
\P\otimes\bP\left(\tau_{\Ti^c
}\le N<\tau_\Oi, E_{\rm good}\right)
\le c(\log N)^{-1}\P\otimes\bP\left(\tau_{\Oi}>N\right)
\label{eq:better-avoid1}
\end{equation}
and for any $v\in\ball{0}{3\varrho_N}$, 
\begin{equation}
\begin{split}
&\P\otimes\bP\left(v\in\partial\Ti, G_{{\Oi}}(\mathcal{x}_N,v)\le\varrho_N^{1-d}\varphi(N,l), \tau_{\ball{v}{(\log N)^6}}\le N<\tau_\Oi, E_{\rm good}\right)\\
&\quad\le c(\log N)^{-1}
\P\otimes\bP\left(v\in\partial\Ti, \tau_{\Oi}\wedge\tau_{\ball{v}{(\log N)^6}}>N\right). 
\end{split}
\label{eq:better-avoid2}
\end{equation}
First, the bound~\eqref{eq:better-avoid1} implies that $\lim_{N\to\infty}\mu_N(\tau_{\Ti^c}\le N)=0$. Second, since $\bP(\tau_{\Oi}\wedge\tau_{\ball{v}{(\log N)^6}}>N)$ is independent of the obstacle configuration in $\ball{v}{(\log N)^6}$, in particular whether each site $w\in\ball{v}{1}$ is ``truly''-open or not, Lemma~\ref{lem:trare} implies 
\begin{equation}
\begin{split}
&\P\otimes\bP\left(v\in\partial\Ti,\tau_{\Oi}\wedge\tau_{\ball{v}{(\log N)^6}}>N\right)\\ 
&\quad \le \sum_{w\in\ball{v}{1}}
\P\otimes\bP\left(w\textrm{ is ``truly''-open, } \tau_{\Oi}\wedge\tau_{\ball{v}{(\log N)^6}}>N\right)\\
&\quad \le \exp\left\{-(\log N)^2\right\}\P\otimes\bP\left(\tau_{\Oi}>N\right).
\end{split}
\end{equation}
Therefore, by substituting this bound into~\eqref{eq:better-avoid2} and summing over $v\in\ball{0}{3\varrho_N}$, it follows that $\lim_{N\to\infty}\mu_N(\tau_{\Li(l)}\le N)=0$.

The strategy of the proofs of~\eqref{eq:better-avoid1} and~\eqref{eq:better-avoid2} is to show by a path switching argument that it is better for the random walk not to visit $\Ti^c$, or $\ball{v}{(\log N)^6}$ with $v\in\partial\Ti$ and $G_{{\Oi}}(\mathcal{x}_N,v)\le\varrho_N^{1-d}\varphi(N,l)$. Note that on the event $E_{\ref{eq:assum-Glbd}}$, we have 
\begin{equation}
\Ti^c\cup \ball{v}{(\log N)^6}\subset\ball{\mathcal{x}_N}{\varrho_N-l/2}^c
\label{eq:TLoutside}
\end{equation}
and hence it is natural to use the path decomposition in terms of the crossings from $\partial\ball{\mathcal{x}_N}{\varrho_N-l}$ to $\ball{\mathcal{x}_N}{\varrho_N-2l}$ introduced in Subsection~\ref{SS:path-decomp}. The random walk can visit $\Ti^c\cup\ball{v}{(\log N)^6}$ only on a crossing $[\tau_k,\sigma_{k+1}]$ ($k\in\N$) and if it happens, we want to switch such a crossing to one that avoids $\Ti^c\cup\ball{v}{(\log N)^6}$. However, it turns out to be easier to switch the path on the entire interval $[\tau_k,\tau_{k+1}]$. More precisely, for $u\in\partial\ball{\mathcal{x}_N}{\varrho_N-l}$ and $u'\in\partial\ball{\mathcal{x}_N}{\varrho_N-l}\cup \ball{\mathcal{x}_N}{(1-2\epsilon)\varrho_N}$, we are going to compare
\begin{equation}
\begin{split}
& p^{\rm visit}_t(u,u')\\
&\quad:=
 \begin{cases}
 \bP_u\left(\tau_{\Ti^c\cup\ball{v}{(\log N)^6}}<\tau_1=t<\tau_{\Oi}, S_t=u'\right),
 &\textrm{if }u'\in\partial\ball{\mathcal{x}_N}{\varrho_N-l},\\
 \bP_u\left(\tau_{\Ti^c\cup\ball{v}{(\log N)^6}}<t<\tau_{\Oi}\wedge\tau_1, S_t=u'\right),
 &\textrm{if }u'\in\ball{\mathcal{x}_N}{(1-2\epsilon)\varrho_N}
 \end{cases}
\end{split}
\label{eq:pvisit-def}
\end{equation}
with 
\begin{equation}
\begin{split}
 & p^{\rm avoid}_t(u,u')\\
 &:=
 \begin{cases}
 \bP_u\left(\tau_{\Ti^c\cup\ball{v}{(\log N)^6}}> \tau_1=t, S_t=u'\right),
 &\textrm{if }u'\in\partial\ball{\mathcal{x}_N}{\varrho_N-l},\\
 \bP_u\left(\tau_{\Ti^c\cup\ball{v}{(\log N)^6}}\wedge\tau_1> t, S_t=u'\right),
 &\textrm{if }u'\in\ball{\mathcal{x}_N}{(1-2\epsilon)\varrho_N}.
 \end{cases}
\end{split}
\label{eq:pavoid-def}
\end{equation}
These are the probabilities that conditionally on $S_{\tau_k}=u$, the random walk path during $[\tau_k,\tau_{k+1}]$ either visits or avoids $\Ti^c$ and $B(v; (\log N)^6)$ and ends at $u'$ at time $\tau_{k+1}=\tau_k+t$. The two cases $u'\in\partial\ball{\mathcal{x}_N}{\varrho_N-l}$ and $u'\in\ball{\mathcal{x}_N}{(1-2\epsilon)\varrho_N}$ correspond to $k<K(\bm{\tau})$ and $k=K(\bm{\tau})$ respectively, where for the latter case recall that we are working on the event $E_{\ref{lem:end-inside}}$. The key comparison estimate we will prove is the following: if $v\in\partial\Ti$, $G_{{\Oi}}(\mathcal{x}_N,v)\le \varrho_N^{1-d}\varphi(N,l)$ and $E_{\ref{eq:assum-Glbd}}$ holds, then
\begin{equation}
p^{\rm visit}_{t}(u,u') 
\le \varrho_N^{-d}(\log N)^{-3}p^{\rm avoid}_{t+2\varrho_N^2}(u,u').
\label{eq:key-compar}
\end{equation}

Let us first see how we can deduce the desired bounds~\eqref{eq:better-avoid1} and~\eqref{eq:better-avoid2} from~\eqref{eq:key-compar}. We assume \eqref{eq:TLoutside} and $G_{{\Oi}}(\mathcal{x}_N,v)\le \varrho_N^{1-d}\varphi(N,l)$. 
For $j\ge 1$ and $\bm{k}=(k_i)_{i=1}^j\subset\N^j$, consider the event that the crossings with indices ${m}\in\bm{k}$ visit $\Ti^c\cup\ball{v}{(\log N)^6}$ and others do not. Its probability can be bounded as
\begin{equation}
\begin{split}
& \sum_K\sum_{t_1<t_2<\dotsc<t_{K+1}=N}\sum_{u_1,\dotsc,u_{K+1}} p^{\ball{\mathcal{x}_N}{\varrho_N-l}}_{t_1}(0,u_1)\\
&\qquad\times\prod_{{m}=1}^K \left(p^{\rm avoid}_{t_{{m}+1}-t_{{m}}}(u_{{m}},u_{{m}+1})1_{{m}\not\in\bm{k}}+p^{\rm visit}_{t_{{m}+1}-t_{{m}}}(u_{{m}},u_{{m}+1})1_{{m}\in\bm{k}}\right)\\
&\quad \le \varrho_N^{-dj}(\log N)^{-3j}\bP\left(\tau_{\Ti^c\cup\ball{v}{(\log N)^6}}>N+2j\varrho_N^2\right), 
\end{split}
\end{equation}
where in the first line, $u_1,\dotsc,u_K\in\partial\ball{\mathcal{x}_N}{\varrho_N-l}$, $u_{K+1}\in\ball{\mathcal{x}_N}{(1-2\epsilon)\varrho_N}$ and for each ${m}\in \bm{k}$, we have used~\eqref{eq:key-compar} to get the extra multiplicative factor $\varrho_N^{-d}(\log N)^{-3}$ by lengthening the crossing duration by $2\varrho_N^2$. Recalling that we are assuming~$E_{\ref{lem:num-length}}$, we may restrict our consideration to $K\le Nl^{-2}\le \varrho_N^d(\log N)^2$. Therefore there are at most $K^j\le \varrho_N^{dj}(\log N)^{2j}$ choices of $\bm{k}=(k_i)_{i=1}^j$ and thus it follows that
\begin{equation}
\begin{split}
& \bP\left(\textrm{exactly $j$ crossings visit }\Ti^c\cup \ball{v}{(\log N)^6},E_{\ref{lem:num-length}},\tau_\Oi>N\right)\\
&\quad \le (\log N)^{-j} \bP\left(\tau_{\Ti^c\cup\ball{v}{(\log N)^6}}>N\right).
\end{split}
\end{equation}
Since $\Oi\subset\Ti^c$, summing over $j\ge 1$, we obtain~\eqref{eq:better-avoid1} and~\eqref{eq:better-avoid2}.

It remains to prove~\eqref{eq:key-compar}. Recall first that~\eqref{eq:TLoutside} holds on the event $E_{\ref{eq:assum-Glbd}}$. In particular, during $[\sigma_1,\tau_1]$, the random walk stays inside $\ball{\mathcal{x}_N}{\varrho_N-l}$ and can visit neither $\Oi$ nor $\Ti^c\cup\ball{v}{(\log N)^6}$. Based on this observation, both cases in~\eqref{eq:pvisit-def} can be described as follows: the random walk starting from $u\in\partial\ball{\mathcal{x}_N}{\varrho_N-l}$ visits $\Ti^c\cup\ball{v}{(\log N)^6}$ and $\overline{\ball{\mathcal{x}_N}{\varrho_N-2l}}$ in this order without hitting $\Oi$, and then stays inside $\ball{\mathcal{x}_N}{\varrho_N-l}$ before it ends at $u'$ at time $t$. Therefore, using the strong Markov property at $\sigma_1$, the first hitting time of $\overline{\ball{\mathcal{x}_N}{\varrho_N- 2l}}$, we obtain
\begin{equation}
\begin{split}
&p^{\rm visit}_{t}(u,u')\\ 
&\quad=\bE_u\left[p^{\ball{\mathcal{x}_N}{\varrho_N-l}}_{t-\sigma_1}(S_{\sigma_1},u')\colon\tau_{\Ti^c\cup\ball{v}{(\log N)^6}}<\sigma_1<t\wedge\tau_{\Oi}\right].
\end{split}
\label{eq:t-s-t-visit}
\end{equation}
Similarly, by~\eqref{eq:TLoutside}, for the random walk to avoid $\Ti^c\cup\ball{v}{(\log N)^6}$, it suffices to stay inside $\ball{\mathcal{x}_N}{\varrho_N-l/2}$ and hence using the Markov property at time $\varrho_N^2$, we obtain
\begin{equation}
\begin{split}
&p^{\rm avoid}_{t+2\varrho_N^2}(u,u')\\
&\quad
\ge \bE_u\left[p^{\ball{\mathcal{x}_N}{\varrho_N-l}}_{t+\varrho_N^2}(S_{\varrho_N^2},u')\colon\tau_{\ball{\mathcal{x}_N}{\varrho_N-l/2}}\wedge \tau_1>\varrho_N^2, S_{\varrho_N^2}\in\ball{\mathcal{x}_N}{\varrho_N/2}\right].
\end{split}
\label{eq:t-s-t-avoid} 
\end{equation}
When we replace $p^{\rm visit}$ by $p^{\rm avoid}$, we basically switch the path $S_{[0,\sigma_1]}$ to paths of length $\rho_N^2$ that stays inside $\ball{\mathcal{x}_N}{\varrho_N-l/2}$, does not exit $\ball{\mathcal{x}_N}{\varrho_N-l}$ after hitting $\overline{\ball{\mathcal{x}_N}{\varrho_N-2l}}$ and ends in $\ball{\mathcal{x}_N}{\varrho_N/2}$ at time $\varrho_N^2$. After time $\varrho_N^2$, we let the random walk continue to evolve until it first exits (after time $\varrho_N^2$) from $\ball{\mathcal{x}_N}{\varrho_N-l}$ at time $t+2\varrho_N^2$.

We will prove in Lemma~\ref{lem:rwest2} below the following four estimates~\eqref{eq:exit-Ti}--\eqref{eq:rwest2} in order to control the gain from this switching. The first three estimates show that we gain a lot by switching the first piece $S_{[0,\sigma_1]}$: On the event $\{\mathcal{x}_N\in\ball{0}{(1-2\epsilon)\varrho_N}\}\cap\{\Oi\cap\ball{\mathcal{x}_N}{\varrho_N-l/4}=\emptyset\}$, for any $v\in\partial\Ti$, we have
\begin{align}
\sup_{u\in\partial\ball{\mathcal{x}_N}{\varrho_N-l}}
\bP_u\left(\tau_{\Ti^c}<\sigma_1<\tau_{\Oi}\right) 
&\le \exp\left\{-(\log N)^2\right\},\tag{RW6}\label{eq:exit-Ti}\\
\sup_{u\in\partial\ball{\mathcal{x}_N}{\varrho_N-l}}
\bP_u\left(\tau_{\ball{v}{(\log N)^6}}<\sigma_1<\tau_{\Oi}\right)
&\le G_{{\Oi}}({\mathcal{x}_N},v)^2 \frac{\varrho_N^{d-1}}{l}(\log N)^{8d},
\tag{RW7}\label{eq:visit-v}
\end{align}
and there exists $c_9>0$ such that 
\begin{equation}
\begin{split}
 &\inf_{u\in\partial\ball{\mathcal{x}_N}{\varrho_N-l}}
 \bP_u\left(\tau_{\ball{\mathcal{x}_N}{\varrho_N-l/2}^c}\wedge\tau_1\ge \varrho_N^2,S_{\varrho_N^2}\in\ball{\mathcal{x}_N}{\varrho_N/2}\right)\\
 &\quad \ge c_9\frac{l}{\varrho_N}.
\end{split}
\tag{RW8}\label{eq:stay-in-B2}
\end{equation}
Substituting the last estimate~\eqref{eq:stay-in-B2} into~\eqref{eq:t-s-t-avoid}, we find that uniformly in $u\in\partial\ball{\mathcal{x}_N}{\varrho_N-l}$,
\begin{equation}
p^{\rm avoid}_{t+2\varrho_N^2}(u,u') 
\ge c_9\frac{l}{\varrho_N}\inf_{y\in\ball{\mathcal{x}_N}{\varrho_N/2}}p^{\ball{\mathcal{x}_N}{\varrho_N-l}}_{t+\varrho_N^2}(y,u'). 
\label{eq:avoid-lbd}
\end{equation}
The fourth estimate controls the possible cost caused by switching the second piece $S_{[\sigma_1,\tau_1]}$: There exists $c_{10}>0$ such that for all $t>\sigma_1$, $u'\in\partial\ball{\mathcal{x}_N}{\varrho_N-l}$ and $w\in\partial\ball{\mathcal{x}_N}{\varrho_N-2l}$, 
\begin{equation}
\begin{split}
 &p^{\ball{\mathcal{x}_N}{\varrho_N-l}}_{t-\sigma_1}(w,u')\\
 &\quad\le c_{10}\left(\frac{\varrho_N}{l}\right)^{c_{10}}\exp\left\{c_{10}\frac{\sigma_1}{\varrho_N^{2}}\right\}\inf_{y\in\ball{\mathcal{x}_N}{\varrho_N/2}}p^{\ball{\mathcal{x}_N}{\varrho_N-l}}_{t+\varrho_N^2}(y,u').
\end{split}
\tag{RW9}\label{eq:rwest2}
\end{equation}

We defer the proofs of \eqref{eq:exit-Ti}--\eqref{eq:rwest2} to the next subsection and now complete the proof of~\eqref{eq:key-compar}. Note that the cost in~\eqref{eq:rwest2} becomes large if $\sigma_1$ is large. We first exclude the case where $\sigma_1$ is atypically large by using a tail bound for $\sigma_1$. For typical values of $\sigma_1$, the cost in~\eqref{eq:rwest2} is not too large and we can use~\eqref{eq:exit-Ti}--\eqref{eq:stay-in-B2} to prove~\eqref{eq:key-compar}. Let us first consider the case $\sigma_1\ge c_{11}l^2\log N$ in~\eqref{eq:t-s-t-visit}, where $c_{11}>1$ is to be determined later. By using~\eqref{eq:rwest2}, we find that
\begin{equation}
\begin{split}
&\bE_{u}\left[p^{\ball{\mathcal{x}_N}{\varrho_N-l}}_{t-\sigma_1}(S_{\sigma_1},u')\colon c_{11}l^2\log N\le\sigma_1<  t\wedge\tau_{\Oi}\right]\\
&\quad \le c_{10}\left(\frac{\varrho_N}{l}\right)^{c_{10}}
\inf_{y\in\ball{\mathcal{x}_N}{\varrho_N/2}} p^{\ball{\mathcal{x}_N}{\varrho_N-l}}_{t+\varrho_N^2}(y,u')\\
&\qquad\times\bE_{u}\left[\exp\left\{c_{10}\frac{\sigma_1}{\varrho_N^2}\right\}\colon c_{11}l^2\log N\le\sigma_1< t\wedge\tau_{\Oi}\right].
\end{split}
\label{eq:long-crossing}
\end{equation} 
Observe that up to time $\sigma_1$, the walk is confined in an annulus of width $3l$ and hence $\bP_{u}(\sigma_1\ge n)\le \exp\{-cnl^{-2}\}$. This bound yields that if $l\le \epsilon\varrho_N$ for sufficiently small $\epsilon>0$, then
\begin{equation}
\begin{split}
& \bE_{u}\left[\exp\left\{c_{10}\frac{\sigma_1}{\varrho_N^2}\right\}\colon c_{11}l^2\log N\le\sigma_1< t<\tau_{\Ti^c}\right]\\
&\quad \le \sum_{n\ge c_{11}l^2\log N}\exp\left\{c_{10}\frac{n}{\varrho_N^2}-c\frac{n}{l^2}\right\}\\
&\quad\le \sum_{n\ge c_{11}l^2\log N}\exp\left\{-(c-c_{10}\epsilon^2){\frac{n}{l^2}}\right\}\\
&\quad\le N^{-cc_{11}/2}.
\end{split}
\end{equation}
We choose $c_{11}$ so large that the above right-hand side is less than $\varrho_N^{-3d}$. Then by substituting the above into~\eqref{eq:long-crossing} and comparing with~\eqref{eq:avoid-lbd}, we obtain
\begin{equation}
\begin{split}
&\bE_{u}\left[p^{\ball{\mathcal{x}_N}{\varrho_N-l}}_{t-\sigma_1}(S_{\sigma_1},u')\colon c_{11}l^2\log N\le\sigma_1<  t<\tau_{\Oi}\right]\\
&\quad \le \varrho_N^{-2d}\inf_{y\in\ball{\mathcal{x}_N}{\varrho_N/2}}
p^{\ball{\mathcal{x}_N}{\varrho_N-l}}_{t+\varrho_N^2}(y,u')\\
&\quad \le \frac12\varrho_N^{-d}(\log N)^{-3}
p^{\rm avoid}_{t+2\varrho_N^2}(u,u').
\end{split}
\label{eq:bound-long}
\end{equation}

Next, we consider the case $\sigma_1<c_{11}l^2\log N$ in~\eqref{eq:t-s-t-visit}
. In this case, we have a deterministic upper bound on the exponential factor in~\eqref{eq:rwest2} and hence
\begin{equation}
\begin{split}
&\bE_{u}\left[p^{\ball{\mathcal{x}_N}{\varrho_N-l}}_{t-\sigma_1}(S_{\sigma_1},u')\colon\tau_{\Ti^c\cup\ball{v}{(\log N)^6}}<\sigma_1< c_{11}l^2\log N\wedge t\wedge\tau_{\Oi}\right]\\
&\quad \le c_{10}\left(\frac{\varrho_N}{l}\right)^{c_{10}}
\exp\left\{c_{10}c_{11}\left(\frac{l}{\varrho_N}\right)^2\log N\right\}
\inf_{y\in\ball{\mathcal{x}_N}{\varrho_N/2}} p^{\ball{\mathcal{x}_N}{\varrho_N-l}}_{t+\varrho_N^2}(y,u')\\
&\qquad\times\bP_{u}\left(\tau_{\Ti^c\cup\ball{v}{(\log N)^6}}<\sigma_1< c_{11}l^2\log N\wedge t\wedge\tau_{\Oi}\right).
\end{split}
\label{eq:short-crossing}
\end{equation}
Now we use~\eqref{eq:exit-Ti} and~\eqref{eq:visit-v} to see that on the event $\{G_{{\Oi}}(\mathcal{x}_N,v)<\varrho_N^{1-d}\varphi(N,l)\}$, 
\begin{equation}
\begin{split}
& \bP_{u}\left(\tau_{\Ti^c\cup\ball{v}{(\log N)^6}}<\sigma_1< c_{11}l^2\log N\wedge t\wedge\tau_{\Oi}\right)\\
&\quad \le \bP_{u}\left(\tau_{\Ti^c}<\sigma_1<  t\wedge\tau_{\Oi}\right)+\bP_{u}\left(\tau_{\ball{v}{(\log N)^6}}<\sigma_1<  t\wedge\tau_{\Oi}\right)\\
&\quad \le 2\varrho_N^{2-d}\frac{\varphi(N,l)^2}{l^2}(\log N)^{8d}.
\end{split}
\end{equation}
Substituting this into~\eqref{eq:short-crossing} and comparing with~\eqref{eq:avoid-lbd} as in the previous case, we arrive at
\begin{equation}
\begin{split}
 &\bE_{u}\left[p^{\ball{\mathcal{x}_N}{\varrho_N-l}}_{t-\sigma_1}(S_{\sigma_1},u')\colon\tau_{\Ti^c\cup\ball{v}{(\log N)^6}}<\sigma_1< c_{11}l^2\log N\wedge t\wedge\tau_{\Oi}\right]\\
&\quad \le c_{12}\frac{(\log N)^{c_{12}}\varphi(N,l)^2}{\varrho_N^{d}}\exp\left\{c_{12}\left(\frac{l}{\varrho_N}\right)^2\log N\right\}
p^{\rm avoid}_{t+2\varrho_N^2}(u,u')\\
&\quad \le \frac12\varrho_N^{-d}(\log N)^{-3}
p^{\rm avoid}_{t+2\varrho_N^2}(u,u')
\end{split}
\label{eq:bound-short}
\end{equation}
by setting
\begin{equation}
\varphi(N,l)=
\begin{cases}
N^{-c_{12}\epsilon}, &\textrm{if }l=\epsilon\varrho_N,\\
(\log N)^{-c_{12}-4},&\textrm{if }l=\varrho_N/\log N. 
\end{cases}
\end{equation} 
Gathering~\eqref{eq:bound-short} and~\eqref{eq:bound-long}, we get~\eqref{eq:key-compar} and we are done. \qed

\medskip
\subsection{Random walk estimates II}
In this subsection, we prove the random walk estimates~\eqref{eq:exit-Ti}--\eqref{eq:rwest2} used in Subsection~\ref{SS:Tiboundaryw}. Recall the definition of the stopping times $\sigma_k$ and $\tau_k$ ($k\in\N$) in~\eqref{eq:sigma_1}--\eqref{eq:sigma_{k+1}}.
\begin{lemma}
\label{lem:rwest2}
Suppose that $l\in[\varrho_N/\log N,c_3\varrho_N]$, $\mathcal{x}_N\in\ball{0}{(1-2\epsilon)\varrho_N}$ and $\Oi\cap\ball{\mathcal{x}_N}{\varrho_N-l/4}=\emptyset$. Then the following hold:
\begin{enumerate}
\item For $u\in\partial\ball{\mathcal{x}_N}{\varrho_N-l}$ and $v\in\partial\Ti$, 
 \begin{align}
 \bP_u\left(\tau_{\Ti^c}<\sigma_1 <\tau_{\Oi}\right) 
 &\le \exp\left\{-(\log N)^2\right\},\tag{RW6}\label{eq:exit-Ti}\\
 \bP_u\left(\tau_{\ball{v}{(\log N)^6}}<\sigma_1<\tau_{\Oi}\right) 
 &\le G_{{\Oi}}({\mathcal{x}_N},v)^2 \frac{\varrho_N^{d-1}}{l}(\log N)^{8d}.
 \tag{RW7}\label{eq:visit-v}
 \end{align}
\item  There exists $c_9>0$ such that for $u\in\partial\ball{\mathcal{x}_N}{\varrho_N-l}$, 
 \begin{equation}
 \bP_u\left(\tau_{\ball{\mathcal{x}_N}{\varrho_N-l/2}^c}\wedge\tau_1\ge \varrho_N^2,S_{\varrho_N^2}\in\ball{\mathcal{x}_N}{\varrho_N/2}\right)
 \ge c_9\frac{l}{\varrho_N}.
 \tag{RW8}\label{eq:stay-in-B2}
 \end{equation}
\item There exists $c_{10}>0$ such that uniformly in $0\le m<n$, $w\in\partial\ball{\mathcal{x}_N}{\varrho_N-2l}$  and $u'\in\partial\ball{\mathcal{x}_N}{\varrho_N-l}$, 
 \begin{equation}
\begin{split}
&  p^{\ball{\mathcal{x}_N}{\varrho_N-l}}_{n-m}(w,u')\\
&\quad \le c_{10}\left(\frac{\varrho_N}{l}\right)^{c_{10}}\exp\left\{c_{10}m\varrho_N^{-2}\right\}
 \inf_{y\in\ball{\mathcal{x}_N}{\varrho_N/2}}p^{\ball{\mathcal{x}_N}{\varrho_N-l}}_{n+\varrho_N^2}(y,u').
\end{split} 
\tag{RW9}\label{eq:rwest2}
 \end{equation}
\end{enumerate}
\end{lemma}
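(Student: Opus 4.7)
\medskip
\noindent\textbf{Common geometry.} All four bounds exploit a consequence of the assumption $\Oi\cap\ball{\mathcal{x}_N}{\varrho_N-l/4}=\emptyset$. Every site in $\ball{\mathcal{x}_N}{\varrho_N-l/4-(\log N)^5}$ is surrounded by an obstacle-free ball of radius $(\log N)^5$ and is therefore truly-open; since this ball contains the origin (using $\mathcal{x}_N\in\ball{0}{(1-2\epsilon)\varrho_N}$ together with $l\le c_3\varrho_N$ for $c_3$ small) and is itself connected, it lies entirely in $\Ti$. Hence $u\in\partial\ball{\mathcal{x}_N}{\varrho_N-l}\subset\Ti$, and every $v\in\partial\Ti$ as well as every site of $\Ti^c$ lies at distance $\ge 3l/2\gg (\log N)^5$ from the inner shell $\overline{\ball{\mathcal{x}_N}{\varrho_N-2l}}$. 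Given this, (RW6) follows from the strong Markov property at $\tau':=\tau_{\Ti^c}$: because $u\in\Ti$, the walk enters $\Ti^c$ through a neighbor of $\Ti$, and by the maximality of the truly-open component $\Ti$, $S_{\tau'}$ is open but not truly-open. The geometric observation forces $\sigma_1-\tau'\ge 3l/2>(\log N)^5$ deterministically, and the definition of truly-open then gives $\bP_{S_{\tau'}}(\tau_\Oi>\sigma_1)\le \bP_{S_{\tau'}}(\tau_\Oi>(\log N)^5)\le e^{-(\log N)^2}$.

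\medskip
\noindent\textbf{(RW8) and (RW9).} For (RW8) the plan is the standard ball-confinement estimate: $u$ lies at distance $l/2$ from $\partial\ball{\mathcal{x}_N}{\varrho_N-l/2}$, so by the approximate linearity of the principal Dirichlet eigenfunction $\phi_1$ of $\ball{\mathcal{x}_N}{\varrho_N-l/2}$ near the boundary together with the spectral gap (Lemma~\ref{lem:specest}), the probability of remaining in $\ball{\mathcal{x}_N}{\varrho_N-l/2}$ up to time $\varrho_N^2$ is of order $l/\varrho_N$; conditional on survival, the endpoint concentrates in $\ball{\mathcal{x}_N}{\varrho_N/2}$ with constant probability, and confinement to $\ball{\mathcal{x}_N}{\varrho_N-l/2}$ automatically implies $\tau_1\ge\varrho_N^2$. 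For (RW9), I would expand the discrete Dirichlet heat kernel on $\ball{\mathcal{x}_N}{\varrho_N-l}$ spectrally: for $n+\varrho_N^2\gtrsim\varrho_N^2$ the leading eigenpair dominates, producing an eigenvalue factor $(1-\lambda_1)^{-m-\varrho_N^2}=e^{O(m\varrho_N^{-2}+1)}$ and an eigenfunction ratio $\phi_1(w)/\inf_{y\in\ball{\mathcal{x}_N}{\varrho_N/2}}\phi_1(y)=O(l/\varrho_N)$, which is trivially bounded by $(\varrho_N/l)^{c_{10}}$; when $n-m<\varrho_N^2$ the parabolic Harnack inequality of~\cite{D99} bridges the two times locally.

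\medskip
\noindent\textbf{(RW7), the hard part.} I would decompose by the first visit to $\ball{v}{R}$ with $R=(\log N)^6$:
\[
\bP_u(\tau_{\ball{v}{R}}<\sigma_1<\tau_\Oi)\le\sum_{y\in\ball{v}{R}}\bP_u(\tau_{\ball{v}{R}}<\sigma_1\wedge\tau_\Oi,\,S_{\tau_{\ball{v}{R}}}=y)\,\bP_y(\sigma_1<\tau_\Oi).
\]
For the forward factor, the trivial bound $\bP_u(\tau_{\ball{v}{R}}<\tau_\Oi)\le G_\Oi(u,v)$ holds because any visit to $\ball{v}{R}$ before $\tau_\Oi$ contributes at least one to the Green function, and a Harnack comparison (both basepoints sitting in a large obstacle-free region under our assumption) replaces $G_\Oi(u,v)$ by $G_\Oi(\mathcal{x}_N,v)$ up to a bounded constant. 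For the return factor, time-reversibility and an analogous Green-function decomposition, combined with the eigenfunction estimates on the annular region, give
$\sup_{y\in\ball{v}{R}}\bP_y(\sigma_1<\tau_\Oi)\le C(\log N)^{C}\,G_\Oi(\mathcal{x}_N,v)\cdot \varrho_N^{d-1}/l$,
where the factor $\varrho_N^{d-1}/l$ is the capacity (or equivalently harmonic-measure) cost of the walk traversing the boundary layer of width $l$ back to the inner shell. Multiplying the two contributions yields the claimed bound with all polylogarithmic losses absorbed into $(\log N)^{8d}$. The main obstacle is making the Green-function/capacity estimates rigorous inside a random, obstacle-perforated domain; the central inputs are the spectral estimates of Appendix~\ref{app:eigen}, the parabolic Harnack inequality of~\cite{D99}, and a sojourn-time upper bound in $\ball{v}{R}$ before $\tau_\Oi$ via local heat-kernel estimates.
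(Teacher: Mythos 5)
Your treatment of (RW6) matches the paper's: strong Markov at the first entry to $\Ti^c$, the deterministic lower bound $\sigma_1>(\log N)^5$, and the definition of truly-open. Your (RW7) is also essentially the paper's argument (the paper reverses the time on the middle segment $[\tau_y,\sigma_1]$ and then decomposes via $\tau_1$ to extract the gambler's-ruin factor $1/l$ and the sum over $\partial\ball{\mathcal{x}_N}{\varrho_N-2l}$; your return-factor bound encodes the same information), and your (RW9) via spectral expansion plus PHI for short times is a plausible alternative to the paper's iterated-PHI-on-growing-balls argument, although the short-time case is left rather vague.

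The genuine gap is in (RW8). Your reduction rests on the assertion that ``confinement to $\ball{\mathcal{x}_N}{\varrho_N-l/2}$ automatically implies $\tau_1\ge\varrho_N^2$,'' but this is false. Since $u\in\partial\ball{\mathcal{x}_N}{\varrho_N-l}$, reaching $\ball{\mathcal{x}_N}{\varrho_N/2}$ by time $\varrho_N^2$ forces the walk to hit $\overline{\ball{\mathcal{x}_N}{\varrho_N-2l}}$ at some $\sigma_1<\varrho_N^2$, after which $\tau_1$ is triggered as soon as the walk re-enters $\ball{\mathcal{x}_N}{\varrho_N-l}^c$. A path can perfectly well return to the annulus $A(\mathcal{x}_N;\varrho_N-l,\varrho_N-l/2)$ after $\sigma_1$ while still remaining inside $\ball{\mathcal{x}_N}{\varrho_N-l/2}$; in that case $\tau_1<\varrho_N^2$ even though your confinement event holds. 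So your single-ball eigenfunction estimate bounds a strictly larger event and does not yield the stated lower bound without further work. The paper handles this by a two-phase strategy: first confine the walk for $l^2$ steps to the annulus $A(\mathcal{x}_N;\varrho_N-2l,\varrho_N-l/2)$ (so that $\sigma_1>l^2$ and the walk meanwhile moves a distance $\sim l$ away from $\partial\ball{\mathcal{x}_N}{\varrho_N-l}$), and then confine it for the remaining $\varrho_N^2-l^2$ steps to the smaller ball $\ball{\mathcal{x}_N}{\varrho_N-l}$, which simultaneously guarantees $\tau_1\ge\varrho_N^2$ and produces the $l/\varrho_N$ boundary factor because after phase one the starting point is at distance $\gtrsim l$ from $\partial\ball{\mathcal{x}_N}{\varrho_N-l}$. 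Trying to confine directly to $\ball{\mathcal{x}_N}{\varrho_N-l}$ from the start would not work either, since $u$ is only at unit distance from its boundary and the eigenfunction factor would then be $\sim 1/\varrho_N$ rather than $\sim l/\varrho_N$.
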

Let us explain the intuitions behind these bounds before delving into the proof. The first assertion~\eqref{eq:exit-Ti} follows readily from the definition of the ``truly''-open set. The second assertion~\eqref{eq:visit-v} is based on the following observation. The probability for the random walk to visit $\ball{v}{(\log N)^6}$ without hitting $\Oi$ is bounded by $G_{{\Oi}}(u,v)$. Then it has to come back to $w$ but by the time reversal, the probability is again bounded by $G_{{\Oi}}(w,v)$. Finally, the factor $l^{-1}$ comes from the fact that the random walk hits $\ball{x}{\varrho_N-2l}$ for the first time at $w$. We need the extra poly-logarithmic factor to change the starting points in $G_{{\Oi}}(u,v)$ and $G_{{\Oi}}(w,v)$ to $\mathcal{x}_N$ by using the elliptic Harnack inequality. The third assertion~\eqref{eq:stay-in-B2} is a slight modification of~\eqref{LL10}. The fourth assertion~\eqref{eq:rwest2} basically says that it is easier for the random walk to go from $y\in\ball{\mathcal{x}_N}{\varrho_N/2}$ to $u'\in\partial\ball{\mathcal{x}_N}{\varrho_N-l}$ than from $w\in\partial\ball{\mathcal{x}_N}{\varrho_N-2l}$, without exiting $\ball{\mathcal{x}_N}{\varrho_N-l}$. There are two reasons why we have a large factor on the right-hand side. First, if $w$ and $u'$ are close to each other and $n-m$ is of order $l^2$, then it is in fact better to start from $w$; second, if both $m$ and $n$ are large, then we have to include the cost for the random walk to stay in $\ball{\mathcal{x}_N}{\varrho_N-l}$ for extra time $m+\varrho_N^2$. 

\medskip
\noindent\textbf{Proof of Lemma~\ref{lem:rwest2}.} 
The left-hand side of~\eqref{eq:exit-Ti} is bounded by
\begin{equation}
\bP_u\left(\tau_{\Ti^c}<\sigma_1<\tau_\Oi\right)
\le \sup_{x\in\Ti^c}\bP_x(\sigma_1<\tau_\Oi)
\end{equation}
by the strong Markov property applied at $\tau_{\Ti^c}$.
Since we assume $\Oi\cap\ball{\mathcal{x}_N}{\varrho_N-l/4}=\emptyset$, we have
$\Ti\subset \ball{\mathcal{x}_N}{\varrho_N-l/2}$ and hence $\sigma_1>(\log N)^5$ whenever the random walk starts from $\Ti^c$. The bound~\eqref{eq:exit-Ti} follows from the definition of $\Ti$.

Next, the left-hand side of~\eqref{eq:visit-v} is bounded by
\begin{equation}
\sum_{w\in\partial\ball{\mathcal{x}_N}{\varrho_N-2l}} \sum_{y\in \ball{v}{(\log N)^6}}\bP_u\left(\tau_y<\sigma_1<\tau_{\Oi}, S_{\sigma_1}=w\right). 
\end{equation}
By reversing the time on $[\tau_y,\sigma_1]$, we have that for each $y\in \ball{v}{(\log N)^6}$, 
\begin{equation}
\begin{split}
&\bP_u\left(\tau_y< \sigma_1<\tau_{\Oi}, S_{\sigma_1}=w\right)\\
&\quad \le \bP_u\left(\tau_{{\ball{v}{(\log N)^6}}}<\sigma_1\wedge\tau_{\Oi}\right)\bP_w\left(\tau_{{\ball{v}{(\log N)^6}}}< \sigma_1\wedge\tau_{\Oi}\right).
\end{split} 
\label{eq:time-reverse}
\end{equation}
We further bound the second factor on the right-hand side by using the strong Markov property at $\tau_1$ as
\begin{equation}
\begin{split}
&\sum_{z\in\partial\ball{\mathcal{x}_N}{\varrho_N-l}}\bP_w\left(S_{\tau_1}=z, \tau_1<\sigma_1\right)
\bP_z\left(\tau_{{\ball{v}{(\log N)^6}}}< \sigma_1\wedge\tau_{\Oi}\right)\\
&\quad\le \bP_w\left(\tau_1<\sigma_1\right)
\max_{z\in \partial\ball{\mathcal{x}_N}{\varrho_N-l}}\bP_z\left(\tau_{{\ball{v}{(\log N)^6}}}< \sigma_1\wedge\tau_{\Oi}\right)\\
&\quad\le \frac{c}{l}
\max_{z\in \partial\ball{\mathcal{x}_N}{\varrho_N-l}}\bP_z\left(\tau_{{\ball{v}{(\log N)^6}}}< \sigma_1\wedge\tau_{\Oi}\right),
\end{split}
\end{equation} 
where in the last inequality we have used a gambler's ruin type estimate (see~\cite[(6.14)]{LL10} for a similar estimate). Substituting this into~\eqref{eq:time-reverse} and summing over $y\in \ball{v}{(\log N)^6}$, we find that 
\begin{equation}
\begin{split}
 &\bP_u\left(\tau_{\ball{v}{(\log N)^6}}<\sigma_1<\tau_{\Oi}, S_{\sigma_1}=w\right) \\
 &\quad \le \frac{c}{l}(\log N)^{{6d}}
 \max_{z\in \partial\ball{\mathcal{x}_N}{\varrho_N-l}}\bP_z\left(\tau_{{\ball{v}{(\log N)^6}}}< \sigma_1\wedge\tau_{\Oi}\right)^2\\ 
&\quad \le \frac{c}{l}(\log N)^{{6d}}
\max_{z\in \partial\ball{\mathcal{x}_N}{\varrho_N-l}}G_{{\Oi}}(z,v)^2.
\end{split}
\label{eq:P^2}
\end{equation}
We are going to shift the variable $z\in \partial\ball{\mathcal{x}_N}{\varrho_N-l}$ to $\mathcal{x}_N$ by applying the following elliptic Harnack inequality to the function $G_{{\Oi}}(\cdot,v)$, which is harmonic in $\ball{\mathcal{x}_N}{\varrho_N-l/2}$: There exists $c_{13}>0$ such that for any $x\in\Z^d$, $r\in\N$ sufficiently large and any non-negative harmonic function $f$ on $\ball{x}{r}$, 
\begin{equation}
 \sup_{\ball{x}{0.9r}}f(y)
\le c_{13}\inf_{\ball{x}{0.9r}}f(y).\label{eq:EHI}
\end{equation}
See~\cite[Theorem~6.3.9]{LL10}, for example. 

To compare $G_{{\Oi}}(z,v)$ with $G_{{\Oi}}(\mathcal{x}_N,v)$, we will apply~\eqref{eq:EHI} iteratively as follows. First, let $l_1=l$ and $z_1$ be the point on $\partial\ball{z}{l_j}$ closest to $\mathcal{x}_N$. Applying~\eqref{eq:EHI} to $G_{{\Oi}}(\cdot,v)$ on the ball $\ball{z_1}{l_1}$ gives $G_{{\Oi}}(z,v)\le cG_{{\Oi}}(z_1,v)$. We can now iterate this procedure. For $j\ge 2$, 
let $l_j:=2^{j-1}l$ and $z_{j+1}$ be the point on $\partial\ball{z_{j}}{l_j}$ closest to $\mathcal{x}_N$, and apply~\eqref{eq:EHI} to $G_{{\Oi}}(\cdot,v)$ on the ball $\ball{z_j}{l_j}$. The iteration is stopped at the first $J$ such that $z_J\in\ball{\mathcal{x}_N}{2\varrho_N/3}\}$. See Figure~\ref{fig:GrowingBallsE}.
\begin{figure}[!b]
\centering
\includegraphics{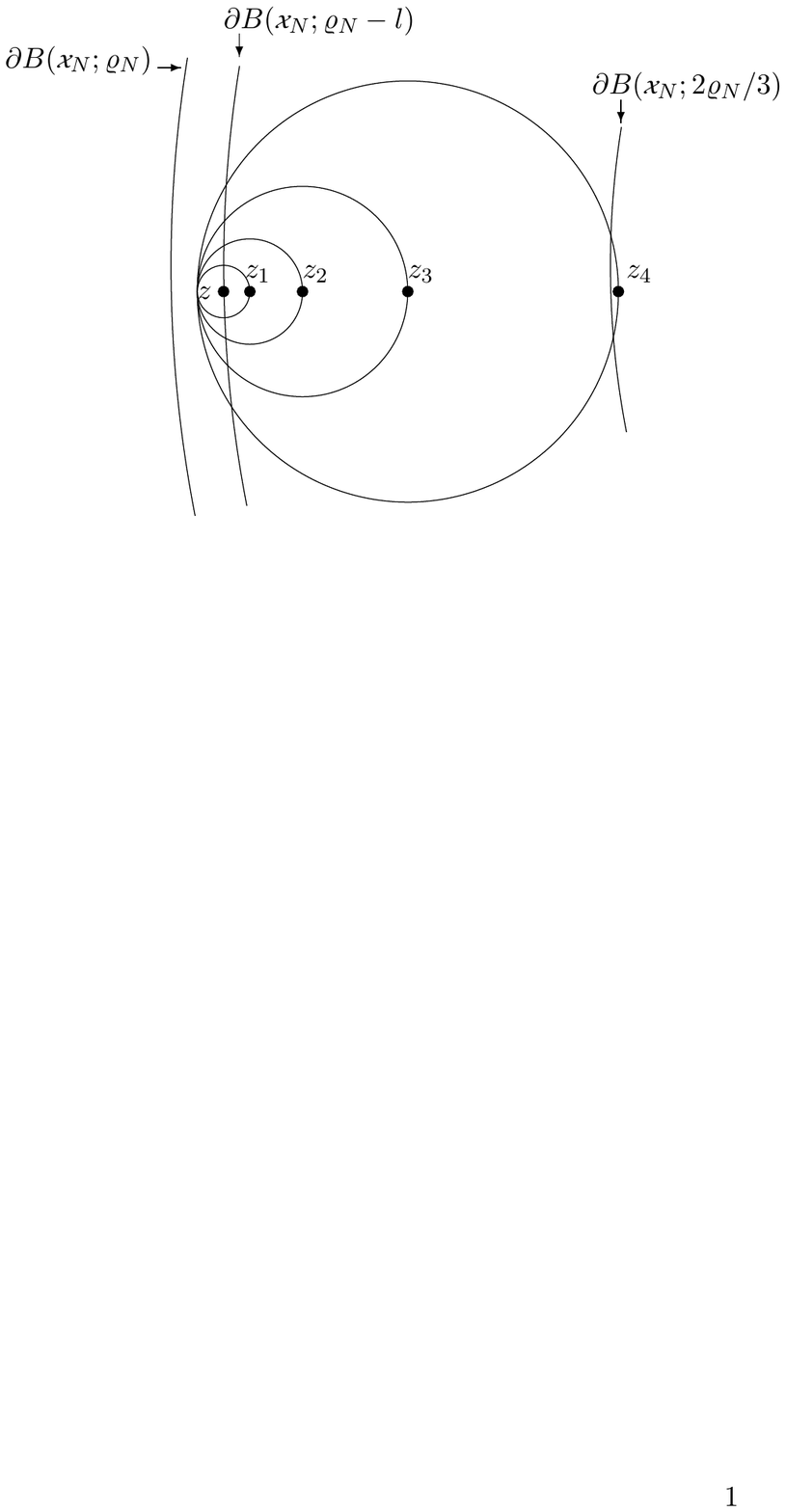}
\caption{The sequence $(z_j)_{j=1}^J$ constructed in the proof of~\eqref{eq:visit-v}. The balls around $z_j$ have geometrically growing radii and the construction terminates at $J=4$ in this picture.}
\label{fig:GrowingBallsE}
\end{figure}
Noting that $J\le c\log (\rho_N/l)$, we have
\begin{equation}
\begin{split}
G_{{\Oi}}(z,v)
& \le c_{13} G_{{\Oi}}(z_1,v)\\
& \le \dotsb\\
& \le c_{13}^J G_{{\Oi}}(z_J,v)\\
& \le \left(\frac{\varrho_N}{l}\right)^c G_{{\Oi}}(\mathcal{x}_N,v), 
\end{split}
\label{eq:P<G}
\end{equation}
where in the last inequality, we have applied~\eqref{eq:EHI} in $\ball{\mathcal{x}_N}{\varrho_N-l/2}$ to bound $G_{{\Oi}}(z_J,v)$ by $c_{13}G_{{\Oi}}(\mathcal{x}_N,v)$. Since $l\ge \varrho_N/\log N$, by substituting~\eqref{eq:P<G} into~\eqref{eq:P^2} and summing over $w\in\partial\ball{\mathcal{x}_N}{\varrho_N-2l}$, we get the desired bound~\eqref{eq:visit-v}. 

In order to prove the lower bound~\eqref{eq:stay-in-B2}, we let the random walk obey the following strategy: pick $u'\in\partial\ball{\mathcal{x}_N}{\varrho_N-3l/2}\cap\ball{u}{l}$ and
\begin{enumerate}
 \item $S_{l^2}\in\ball{u'}{l/4}$ without exiting $A(\mathcal{x}_N;\varrho_N-2l,\varrho_N-l/2)$;
 \item $S_{\varrho_N^2}\in\ball{\mathcal{x}_N}{\varrho_N/2}$ without exiting $\ball{\mathcal{x}_N}{\varrho_N-l}$.
\end{enumerate}
In this way, the condition $\tau_{\ball{\mathcal{x}_N}{\varrho_N-l/2}^c}\wedge\tau_1\ge \varrho_N^2$ holds and hence the left-hand side of~\eqref{eq:stay-in-B2} is bounded from below by the probability of the above strategy. With the help of~\eqref{LL10}, one can find $c>0$ such that
\begin{align}
\bP_u\left(S_{l^2}\in\ball{u'}{l/4}, S_{[0,l^2]}\subset A(\mathcal{x}_N;\varrho_N-2l,\varrho_N-l/2)\right)
&\ge c,\\
\inf_{y\in\ball{u'}{l/4}}\bP_y\left(S_{\varrho_N^2-l^2}\in\ball{\mathcal{x}_N}{\varrho_N/2}, S_{[0,\varrho_N^2-l^2]}\subset\ball{\mathcal{x}_N}{\varrho_N-l}\right)
&\ge c\frac{l}{\varrho_N}.
\end{align}
Collecting these bounds, we get~\eqref{eq:stay-in-B2}.

Finally we prove~\eqref{eq:rwest2}. In the case $n+ \varrho_N^2\le 2\varrho_N^2$, we have
\begin{equation}
\begin{split}
&p^{\ball{\mathcal{x}_N}{\varrho_N-l}}_{n+\varrho_N^2}(y,u')\\
& \quad\ge\bE_u\Bigl[p^{\ball{\mathcal{x}_N}{\varrho_N-l}}_{n+\varrho_N^2-\tau_{{\partial\ball{\mathcal{x}_N}{\varrho_N-3l/2}}}}
\left(S_{\tau_{
{\partial\ball{\mathcal{x}_N}{\varrho_N-3l/2}}}}
, y\right)\colon\\
&\qquad\qquad\tau_{
{\partial\ball{\mathcal{x}_N}{\varrho_N-3l/2}}
}<(n-m)\wedge\tau_{\ball{\mathcal{x}_N}{\varrho_N-l}^c}\Bigr]\\
&\quad\ge cl\varrho_N^{-d-1}\bP_u\left(\tau_{
{\partial\ball{\mathcal{x}_N}{\varrho_N-3l/2}}}
<(n-m)\wedge\tau_{\ball{\mathcal{x}_N}{\varrho_N-l}^c}\right),
\end{split} 
\end{equation}
where we have used the strong Markov property at $\tau_{{\partial\ball{\mathcal{x}_N}{\varrho_N-3l/2}}}$ and applied~\eqref{LL10} to the transition probability appearing inside the expectation, noting that $n+\varrho_N^2-\tau_{\partial\ball{\mathcal{x}_N}{\varrho_N-3l/2}}\in[\varrho_N^2,2\varrho_N^2]$ under the conditions $n+\varrho_N^2<2\varrho_N^2$ and $\tau_{\partial\ball{\mathcal{x}_N}{\varrho_N-3l/2}}<(n-m)\wedge\tau_{\ball{\mathcal{x}_N}{\varrho_N-l}^c}$. Similarly, we have
\begin{equation}
\begin{split}
&p^{\ball{\mathcal{x}_N}{\varrho_N-l}}_{n-m}(w,u')\\
&\quad =\bE_u\Bigl[p^{\ball{\mathcal{x}_N}{\varrho_N-l}}_{n-m-\tau_{
{\partial\ball{\mathcal{x}_N}{\varrho_N-3l/2}}
}}
\left(S_{\tau_{
{\partial\ball{\mathcal{x}_N}{\varrho_N-3l/2}}
}}
,w\right)\colon\\
&\qquad\qquad\tau_{
{\partial\ball{\mathcal{x}_N}{\varrho_N-3l/2}}
}<(n-m)\wedge\tau_{\ball{\mathcal{x}_N}{\varrho_N-l}^c}\Bigr]\\
&\quad\le cl^{-d}\bP_u\left(\tau_{
{\partial\ball{\mathcal{x}_N}{\varrho_N-3l/2}}}
<(n-m)\wedge\tau_{\ball{\mathcal{x}_N}{\varrho_N-l}^c}\right),
\end{split} 
\end{equation}
where we have used the strong Markov property at $\tau_{\ball{\mathcal{x}_N}{\varrho_N-3l/2}}$ and the estimate
\begin{equation}
\begin{split}
 &p^{\ball{\mathcal{x}_N}{\varrho_N-l}}_{n-m-\tau_{\partial\ball{\mathcal{x}_N}{\varrho_N-3l/2}}}\left(S_{\tau_{\partial\ball{\mathcal{x}_N}{\varrho_N-3l/2}}},w\right)\\
 &\quad\le  \sup_{k\in\N,|x-y|\ge l/2}p^{{\ball{\mathcal{x}_N}{\varrho_N-l}}}_k(x,y)\\
 &\quad\le Cl^{-d},
\end{split}
\label{Gasussian-upper2}
\end{equation} 
which follows in the same way as in~\eqref{Gaussian-upper}. Combining the above two bounds, we get~\eqref{eq:rwest2} in this case. 

In the other case $n+\varrho_N> 2\varrho_N^2$, we use the following parabolic Harnack inequality from~\cite[$H(C_H)$ in Theorem~1.7]{D99}: For all $x_0\in\Z^d$, $s\in\R$, $r>200$ and every non-negative $u(t,x)$ that satisfies the discrete heat equation on $\Z\cap[s,s+100r^2]\times \ball{x_0}{r}$, 
\begin{equation}
\begin{split}
&\sup_{(t_1,x_1)\in \Z\cap[s+0.01r^2,s+0.1r^2]\times \ball{x_0}{0.99r}}u(t_1,x_1)\\
&\quad \le 100
 \inf_{(t_2,x_2)\in \Z\cap[s+0.11r^2,s+100r^2]\times \ball{x_0}{0.99r}}u(t_2,x_2).
\end{split}
\label{eq:PHI}
\end{equation}
We use this to first shift the spatial variable $w$ to $y\in\ball{\mathcal{x}_N}{\varrho_N/2}$ and then the time variable in the transition probability kernel in~\eqref{eq:rwest2}. To this end, we construct a sequence $(w_j)_{j=1}^J$ in the same way as in the proof of~\eqref{eq:visit-v} (see Figure~\ref{fig:GrowingBalls}): let $w_0:=w$, and for $j\ge 0$ let $l_j:=2^{j-1}l$ and $w_{j+1}$ be the point on $\partial\ball{w_{j}}{l_j}$ closest to $\mathcal{x}_N$ and
\begin{equation}
 J:=\min\{j\ge 0\colon w_j\in\ball{\mathcal{x}_N}{2\varrho_N/3}\}.
\end{equation}
\begin{figure}[!b]
\centering
\includegraphics{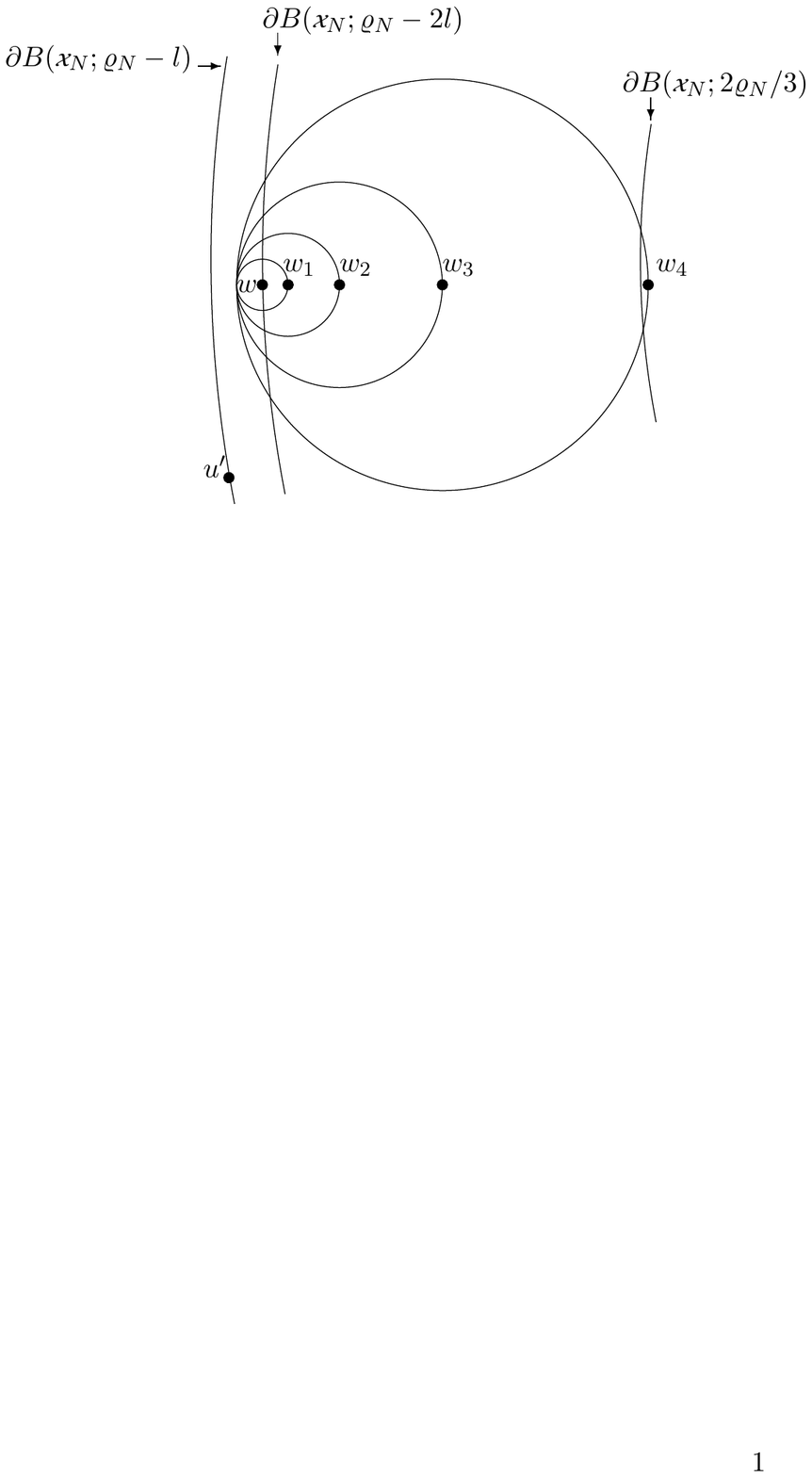}
\caption{The sequence $(w_j)_{j=1}^J$ constructed in the proof of~\eqref{eq:rwest2}. The balls around $w_j$ have geometrically growing radii and the construction terminates at $J=4$ in this picture.}
\label{fig:GrowingBalls}
\end{figure}
Note that $\varrho_N/3\le l_J\le 2\varrho_N/3$ and therefore $J\le c\log (\rho_N/l)$. As a first step, we switch from $w=w_0$ to $w_1$ and use the bound
\begin{equation}
p^{\ball{\mathcal{x}_N}{\varrho_N-l}}_{n-m}(w,u')
\le c_{14}\frac{\varrho_N}{l} p^{\ball{\mathcal{x}_N}{\varrho_N-l}}_{n-m+l_1^2}(w_1,u')\label{eq:w_1}
\end{equation}
with some $c_{14}>0$, which can be verified by applying either~\eqref{Gasussian-upper2} to the left-hand side and~\eqref{LL10} to the right-hand side when $n-m<\varrho_N^2$, or~\eqref{eq:PHI} with $x_0=w_1$, $s=n-m-0.05l_1^2$ and $r=l_1$ to the function $u(t,x):=p^{\ball{\mathcal{x}_N}{\varrho_N-l}}_t(x,u')$ restricted to $\ball{w_1}{l_1}$, when $n-m>\varrho_N^2$. Next, noting that $w_j$ keeps distance at least $l_{j+1}/2$ from $\partial\ball{w_{j+1}}{l_{j+1}}$ for any $j< J$, we can apply~\eqref{eq:PHI} with $x_0=w_j$, $s=n-m+0.95l_j^2$ and $r=l_j$ to the function $u(t,x):=p^{\ball{\mathcal{x}_N}{\varrho_N-l}}_t(x,u')$ restricted to $\ball{w_j}{l_j}$ to obtain
\begin{align}
 p^{\ball{\mathcal{x}_N}{\varrho_N-l}}_{n-m+l_j^2}(w_j,u')
&\le 100 p^{\ball{\mathcal{x}_N}{\varrho_N-l}}_{n-m+l_{j+1}^2}(w_{j+1},u')\quad\textrm{for }j\ge 1, \label{eq:w_j}\\
 p^{\ball{\mathcal{x}_N}{\varrho_N-l}}_{n-m+l_J^2}(w_J,u')
&\le 100 p^{\ball{\mathcal{x}_N}{\varrho_N-l}}_{n-m+\varrho_N^2}(y,u'),
\label{eq:w_J}
\end{align}
where in the second bound, we have used $l_J^2\le 4\varrho_N^2/9$. By using~\eqref{eq:w_1}--\eqref{eq:w_J} and recalling the upper bound on $J$, we get
\begin{equation}
\begin{split}
 p^{\ball{\mathcal{x}_N}{\varrho_N-l}}_{n-m}(w,u')
 &\le c^J \frac{\varrho_N}{l} p^{\ball{\mathcal{x}_N}{\varrho_N-l}}_{n-m+l_J^2}(w_J,u')\\
 &\le \left(\frac{\varrho_N}{l}\right)^c p^{\ball{\mathcal{x}_N}{\varrho_N-l}}_{n-m+\varrho_N^2}(y,u').
\end{split}
\label{eq:adjust-point}
\end{equation}
Now another iteration of~\eqref{eq:PHI} leads us to
\begin{equation}
 \begin{split}
p^{\ball{\mathcal{x}_N}{\varrho_N-l}}_{n-m+\varrho_N^2}(y,u')
&\le 100 p^{\ball{\mathcal{x}_N}{\varrho_N-l}}_{n-m+2\varrho_N^2}(y,u')\\
&\le \dotsb\\
&\le 100^{{\lfloor m\varrho_N^{-2}\rfloor-1}} p^{\ball{\mathcal{x}_N}{\varrho_N-l}}_{n-m+[m\varrho_N^{-2}]\varrho_N^2}(y,u')\\
&\le 100^{\lfloor m\varrho_N^{-2}\rfloor} p^{\ball{\mathcal{x}_N}{\varrho_N-l}}_{n+\varrho_N^2}(y,u'),
 \end{split}
\label{eq:adjust-time}
\end{equation}
where in the $k$-th inequality, we choose $x_0=\mathcal{x}_N$, $s=n-m+(k-0.05)\varrho_N^2$, $r=\varrho_N-l$ and $u(t,x):=p^{\ball{\mathcal{x}_N}{\varrho_N-l}}_t(x,u')$ in~\eqref{eq:PHI}. The desired estimate follows from~\eqref{eq:adjust-point} and~\eqref{eq:adjust-time}. 
\qed

\section{Proof of the Refined Version of Ball Covering Theorem}
\label{S:Thms}
In this section, we prove Theorems~\ref{th:Ticover}. 
As we discussed in Subsection~\ref{SS:Tiboundaryw}, below Lemma~\ref{lem:Glbd}, Theorem~\ref{th:Tiboundary} follows from Theorem~\ref{th:Ticover}. To strengthen Proposition~\ref{prop:Ticoverw} to Theorem~\ref{th:Ticover}, we first show that the volume of $\Ti$ is very close to $|\ball{0}{\varrho_N}|$ with the help of the Faber--Krahn inequality and a control on the number of the possible shapes of $\Ti$ provided by~Proposition~\ref{prop:Tiboundaryw}. Now if there is a non-``truly''-open site $x\in\ball{\mathcal{x}_N}{\varrho_N-\varrho_N^{\epsilon_2}}$, then there exists a closed site near $x$. Lemma~\ref{lem:odensity} then implies that there are in fact many closed sites around $x$, which leads us to a contradiction. 

%
In order to carry out the proof, we will compare $\lambda^{{\rm RW},(k)}_\Ti$, the $k$-th smallest eigenvalue for the generator of the random walk killed upon exiting $\Ti$, with its counterpart for the continuum Laplacian, and then apply the Faber--Krahn inequality. We define the \emph{continuous hull} of $T\subset\Z^d$ by
\begin{equation}
 \tilde{T}:=\{x\in\R^d\colon {\rm dist}_\infty(x,T)<2\}
\label{eq:cont-hull}
\end{equation}
and denote the $k$-th smallest Dirichlet eigenvalue of $-\frac{1}{2d}\Delta$ in $\tilde{T}$ and the corresponding eigenfunction by $\lambda_{\tilde{T}}^{(k)}$ and $\phi_{\tilde{T}}^{(k)}$, respectively. We will prove the following comparison in Appendix~\ref{app:eigen}. 
\begin{lemma}
\label{lem:disc-cont}
There exists $c>0$ such that for any $x\in\Z^d$ and $T\supset\ball{x}{\varrho_N/2}$, 
\begin{equation}
 \lambda^{{\rm RW},(1)}_T\ge\lambda_{\tilde{T}}^{{(1)}}-c\varrho_N^{-4}.
\label{eq:disc>cont}
\end{equation}
Moreover, for each $k\in\N$, there exists $\gamma_k>0$ such that 
\begin{equation}
\left| \lambda^{{\rm RW},(k)}_{\ball{0}{\varrho_N}}-\lambda^{(k)}_{\ball{0}{\varrho_N}}\right|\le \gamma_k\varrho_N^{-3}.
\label{eq:disc<cont}
\end{equation}
\end{lemma}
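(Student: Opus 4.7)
Both inequalities are classical discrete-to-continuum comparisons of Dirichlet eigenvalues, and both will go through the Courant--Fischer min-max principle together with an extension/restriction procedure between $\ell^2(\Z^d)$ and $L^2(\R^d)$. For~\eqref{eq:disc>cont}, let $\phi$ be the discrete principal eigenfunction on $T$ with $\|\phi\|_{\ell^2}=1$, so $\lambda^{{\rm RW},(1)}_T = \mathcal{E}_T(\phi,\phi)$ where $\mathcal{E}_T$ is the discrete Dirichlet form associated with $-\tfrac{1}{2d}\Delta_{\rm disc}$. I would lift $\phi$ to a continuous test function $\bar\phi\in H^1_0(\tilde{T})$ by multilinear interpolation on the unit cubes of $\Z^d$. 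Since $\tilde T$ is the open $\ell^\infty$ 2-neighborhood of $T$, the support of $\bar\phi$ (at most the $\ell^\infty$ 1-neighborhood of $T$) sits comfortably inside $\tilde T$ and no boundary truncation is needed. Cube-by-cube computations then give $\int_{\R^d}\bar\phi^{\,2} = \|\phi\|_{\ell^2}^2 + O(\varrho_N^{-2})$ and $\tfrac{1}{2d}\int_{\R^d}|\nabla\bar\phi|^2 = \mathcal{E}_T(\phi,\phi) + \text{error}$, the latter controlled via the Taylor identity $\Delta_{\rm disc} = \Delta + \tfrac{1}{12}\sum_i \partial^4_i + \cdots$. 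The eigenfunction equation $-\tfrac{1}{2d}\Delta_{\rm disc}\phi = \lambda^{{\rm RW},(1)}_T\phi$, iterated, forces discrete smoothness of $\phi$ on scale $\varrho_N$ (since $\lambda^{{\rm RW},(1)}_T = O(\varrho_N^{-2})$) and hence a Taylor error of order $\varrho_N^{-4}$. Plugging $\bar\phi$ into the variational characterization $\lambda^{(1)}_{\tilde T} \le \tfrac{1}{2d}\int|\nabla\bar\phi|^2 / \int\bar\phi^{\,2}$ then yields~\eqref{eq:disc>cont}.

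For~\eqref{eq:disc<cont}, the lower bound follows from the same lifting argument applied to the span of the top $k$ discrete eigenfunctions, combined with min-max, giving $\lambda^{{\rm RW},(k)}_{\ball{0}{\varrho_N}} \ge \lambda^{(k)}_{\widetilde{\ball{0}{\varrho_N}}} - O(\varrho_N^{-4})$. Since $\widetilde{\ball{0}{\varrho_N}} \subset \ball{0}{\varrho_N+C}$ for some $C=C(d)$, domain monotonicity and the scaling relation $\lambda^{(k)}_{\ball{0}{r}} = r^{-2}\lambda^{(k)}_{\ball{0}{1}}$ give $\lambda^{(k)}_{\widetilde{\ball{0}{\varrho_N}}} \ge \lambda^{(k)}_{\ball{0}{\varrho_N}} - O(\varrho_N^{-3})$. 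For the opposite direction, I would take the first $k$ continuous eigenfunctions $\phi^{(1)},\ldots,\phi^{(k)}$ on $\ball{0}{\varrho_N}$, multiply each by a smooth cutoff vanishing on a collar of width $O(1)$ inside $\partial\ball{0}{\varrho_N}$, and sample on $\Z^d$; the resulting functions span a $k$-dimensional subspace of functions vanishing outside $\ball{0}{\varrho_N}\cap\Z^d$. Linear vanishing of $\phi^{(k)}$ at the smooth boundary with properly normalized gradient gives $|\phi^{(k)}|, |\nabla\phi^{(k)}| = O(\varrho_N^{-d/2-1})$ throughout the collar, hence energy density $O(\varrho_N^{-d-2})$ and total truncation cost $O(\varrho_N^{d-1}\cdot\varrho_N^{-d-2}) = O(\varrho_N^{-3})$ in both the numerator and denominator of the Rayleigh quotient. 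Feeding this into min-max for $\lambda^{{\rm RW},(k)}_{\ball{0}{\varrho_N}}$ yields the upper bound.

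The main obstacle is obtaining the sharp $\varrho_N^{-3}$ rate in~\eqref{eq:disc<cont} rather than the naive $\varrho_N^{-2}$. The improvement comes from the linear vanishing of the continuous Dirichlet eigenfunctions at the smooth boundary of the ball, which gains one factor of $\varrho_N^{-1}$ in the near-boundary gradient estimate $|\nabla\phi^{(k)}| = O(\varrho_N^{-d/2-1})$ relative to a bulk $L^\infty$ bound; this is a consequence of classical boundary regularity for $-\Delta$ on smooth domains. The constant $\gamma_k$ depends on $k$ through the $k$-dependent boundary-regularity constants, but $k$ is fixed so this is harmless. By contrast, \eqref{eq:disc>cont} avoids boundary effects entirely because $\tilde T$ is strictly larger than $T$, so only the bulk Taylor error of the discrete Laplacian contributes and the faster rate $\varrho_N^{-4}$ is attained.
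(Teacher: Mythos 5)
Your proposal is in the same spirit as what the paper does, but the paper in fact just cites Weinberger (1958) for both inequalities rather than re-deriving them: for~\eqref{eq:disc>cont} the paper invokes Weinberger's bound
\begin{equation*}
\lambda_{\tilde T}^{(1)} \le \lambda^{{\rm RW},(1)}_T\left(1 + c\varrho_N^{-2}\right),
\end{equation*}
and combines it with $\lambda^{{\rm RW},(1)}_T \le c\varrho_N^{-2}$ (which follows from $T\supset\ball{x}{\varrho_N/2}$ by domain monotonicity) to get the additive $\varrho_N^{-4}$; for~\eqref{eq:disc<cont} it cites Weinberger's equations (3.27) and (6.11). Your sketch is a self-contained re-derivation of these estimates via interpolation/sampling and min-max, which is the same underlying variational mechanism, just unpacked.

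There is, however, a genuine gap in your explanation of the $\varrho_N^{-4}$ rate in~\eqref{eq:disc>cont}. You attribute it to a Taylor expansion of the discrete Laplacian combined with ``discrete smoothness of $\phi$ on scale $\varrho_N$ forced by iterating the eigenfunction equation.'' But $T$ is only assumed to contain $\ball{x}{\varrho_N/2}$; it may have an arbitrarily irregular boundary (e.g.\ a ball with thin spikes), and near such a boundary the discrete eigenfunction has no a priori smoothness on any mesoscopic scale. The near-boundary cubes contribute to the interpolated gradient energy at leading order, not as a fourth-order Taylor error, so your argument as stated does not yield $\varrho_N^{-4}$ for general $T$. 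The correct mechanism---and the one the paper uses---is \emph{multiplicative}: the interpolation operator satisfies an exact quadratic-form inequality $\lambda^{(1)}_{\tilde T} \le \lambda^{{\rm RW},(1)}_T(1 + c\lambda^{{\rm RW},(1)}_T)$ (no pointwise regularity of $\phi$ is required; this is an algebraic identity between the two Dirichlet forms, analogous to $\int\bar\phi^2 = \|\phi\|_{\ell^2}^2 - \tfrac16\mathcal{E}(\phi)$ in one dimension). The additive $\varrho_N^{-4}$ then comes entirely from the smallness $\lambda^{{\rm RW},(1)}_T = O(\varrho_N^{-2})$ guaranteed by the ball-inclusion hypothesis. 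In other words, the hypothesis $T\supset\ball{x}{\varrho_N/2}$ is doing more work than you give it credit for: it is the sole source of the extra two powers of $\varrho_N^{-1}$, and smoothness of the eigenfunction plays no role. Your treatment of~\eqref{eq:disc<cont}, including the boundary-regularity gain for the collar truncation, is essentially correct and matches the spirit of Weinberger's argument there.
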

Let us recall a lower bound on the survival probability which is well-known in the continuum setting~\cite[(33)]{P99} and for the two dimensional continuous time random walk~\cite[Proposition~2.1]{B94}. Lemma~\ref{lem:surv-asymp} below is the analogue in our discrete time setting, which we prove in Appendix~\ref{app:eigen} for completeness. Recall that we denote the Euclidean volume of $G\subset\R^d$ by $\vol{G}$. 
\begin{lemma}
\label{lem:surv-asymp}
There exists $c>0$ such that for all sufficiently large $N>0$, 
\begin{equation}
\P\otimes\bP\left(\tau_\Oi>N\right)
\ge \exp\left\{\vol{\ball{0}{\varrho_N}}\log p-N\lambda_{\ball{0}{\varrho_N}}^{(1)}-c\varrho_N^{d-1}\right\}.
\end{equation}
\end{lemma}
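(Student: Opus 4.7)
The plan is to lower bound $\P\otimes\bP(\tau_\Oi>N)$ by restricting to the event that the ball $\ball{0}{\varrho_N}$ is entirely free of obstacles and that the random walk stays in this ball up to time $N$. On the first event, $\tau_\Oi\ge\tau_{\ball{0}{\varrho_N}^c}$, and by independence of $\Oi$ and $S$, we obtain
\begin{equation*}
 \P\otimes\bP(\tau_\Oi>N)
 \ge \P\left(\Oi\cap\ball{0}{\varrho_N}=\emptyset\right)\cdot
 \bP\left(\tau_{\ball{0}{\varrho_N}^c}>N\right)
 = p^{|\ball{0}{\varrho_N}|}\,\bP\left(\tau_{\ball{0}{\varrho_N}^c}>N\right).
\end{equation*}
Since $||\ball{0}{\varrho_N}|-\vol{\ball{0}{\varrho_N}}|=O(\varrho_N^{d-1})$ and $\log p<0$, the first factor contributes $\exp\{\vol{\ball{0}{\varrho_N}}\log p-O(\varrho_N^{d-1})\}$, which matches the target up to the desired error.

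For the random walk factor, I would use the spectral decomposition as in~\eqref{eq:EFexpansion}. Writing $\lambda_k:=\lambda^{{\rm RW},(k)}_{\ball{0}{\varrho_N}}$ and $\phi_k:=\phi^{{\rm RW},(k)}_{\ball{0}{\varrho_N}}$, I would keep only the first term and bound
\begin{equation*}
 \bP\left(\tau_{\ball{0}{\varrho_N}^c}>N\right)
 =\sum_{k\ge 1}(1-\lambda_k)^N\langle\phi_k,1\rangle\phi_k(0)
 \ge (1-\lambda_1)^N\langle\phi_1,1\rangle\phi_1(0)\bigl(1-r_N\bigr),
\end{equation*}
where $r_N$ collects contributions from $k\ge 2$. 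Standard spectral bounds (e.g.\ as used in Lemma~\ref{lem:specest}) together with the spectral gap $\lambda_2-\lambda_1\gtrsim\varrho_N^{-2}$ show that $r_N$ is negligible, that $\langle\phi_1,1\rangle\phi_1(0)\ge \varrho_N^{-C}$ for some $C$, and that $\phi_1(0)>0$ (the principal eigenfunction is positive in the interior). These polynomial-in-$\varrho_N$ factors only contribute $\exp\{-C\log\varrho_N\}$, comfortably absorbed in $\exp\{-O(\varrho_N^{d-1})\}$.

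It then remains to convert $(1-\lambda_1)^N$ into the target continuum-eigenvalue expression. Using the elementary inequality $\log(1-\lambda)\ge -\lambda-c\lambda^2$ valid for small $\lambda$, and the fact that $\lambda_1\asymp\varrho_N^{-2}$, one gets $(1-\lambda_1)^N\ge\exp\{-N\lambda_1-cN\varrho_N^{-4}\}$, and the error $N\varrho_N^{-4}=O(\varrho_N^{d-2})$ is again subleading. Finally I would apply~\eqref{eq:disc<cont} with $k=1$ to replace $\lambda^{{\rm RW},(1)}_{\ball{0}{\varrho_N}}$ by $\lambda^{(1)}_{\ball{0}{\varrho_N}}$ at cost $\gamma_1\varrho_N^{-3}$; multiplied by $N\asymp\varrho_N^{d+2}$ this produces an error of order $\varrho_N^{d-1}$, exactly at the order allowed in the statement. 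Collecting the contributions yields the claimed bound.

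The main obstacle is the bookkeeping of all the errors arising from the discrete-to-continuum eigenvalue comparison, the linearization of $(1-\lambda_1)^N$, the discrete-vs-Euclidean volume mismatch, and the neglected higher eigenfunction modes: each must be shown to be $O(\varrho_N^{d-1})$ or better. The comparison in Lemma~\ref{lem:disc-cont} is specifically tuned so that $N\cdot\varrho_N^{-3}\lesssim\varrho_N^{d-1}$, which is what drives the choice of error in~\eqref{eq:disc<cont}; controlling $r_N$ and the $\langle\phi_1,1\rangle\phi_1(0)$ factor uniformly in $N$ is the other delicate point, and it relies on the spectral gap bound~\eqref{eq:ev} together with uniform $L^\infty$ control~\eqref{eq:ef}, both established in Appendix~\ref{app:eigen}.
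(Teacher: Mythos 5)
Your overall strategy matches the paper's: restrict to $\Oi\cap\ball{0}{\varrho_N}=\emptyset$ and confinement in $\ball{0}{\varrho_N}$, expand the walk's survival probability spectrally, keep the ground-state term, and convert from the discrete random walk eigenvalue to the continuum eigenvalue via Lemma~\ref{lem:disc-cont}. The error bookkeeping in your steps (volume mismatch $O(\varrho_N^{d-1})$, linearization error $N\lambda_1^2=O(\varrho_N^{d-2})$, discrete-continuum error $N\varrho_N^{-3}=O(\varrho_N^{d-1})$, and $\langle\phi_1,1\rangle\phi_1(0)\ge c\varrho_N^{-d}$ from~\eqref{eq:EFlbd}) is all sound.

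However, there is a genuine gap in the step where you claim ``standard spectral bounds together with the spectral gap $\lambda_2-\lambda_1\gtrsim\varrho_N^{-2}$ show that $r_N$ is negligible.'' The simple random walk on $\Z^d$ has period $2$, so the spectrum of $Q_{\varrho_N}$ is symmetric about $0$: if $1-\lambda$ is an eigenvalue with eigenfunction $\phi$, then $-(1-\lambda)$ is also an eigenvalue with eigenfunction $(1_{\rm e}-1_{\rm o})\phi$. In particular $1-\lambda^{{\rm RW},(n)}_{\ball{0}{\varrho_N}}=-(1-\lambda^{{\rm RW},(1)}_{\ball{0}{\varrho_N}})$ for $n=|\ball{0}{\varrho_N}|$, and for $N$ even this bottom-of-spectrum term satisfies $|1-\lambda_n|^N=(1-\lambda_1)^N$, i.e.\ it has exactly the same magnitude as the ground-state term. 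The spectral gap $\lambda_2-\lambda_1$ controls the decay of the second eigenvalue from the top, but it says nothing about the mirror eigenvalue at the bottom, so the claimed estimate ``$r_N$ small'' does not follow. Moreover, the sign of the bottom term's contribution $\langle\phi_n,1\rangle\phi_n(0)$ is not obviously favorable, so one cannot simply drop it. The paper's proof circumvents this exactly by working with the squared generator $Q_{\varrho_N}^2$ (whose eigenvalues are $(1-\lambda_k)^2$, collapsing the $\pm$ pair into a single two-dimensional eigenspace spanned by $\phi_1 1_{\rm e}$ and $\phi_1 1_{\rm o}$, Lemma~\ref{lem:parity}), and by tracking the parity of $S_N$ explicitly via $Q_{\varrho_N}^{N/2}1_{\rm e}(0)$, treating $N$ even and odd separately. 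Your argument needs an analogous device (either pass to $Q^2$, or show directly that $\langle\phi_n,1\rangle\phi_n(0)$ is controlled) before the ground-state isolation step is valid.
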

Using this lemma, we can show that the volume of $\tilde{\Ti}$ is very close to $\vol{\ball{0}{\varrho_N}}$.
\begin{lemma}
\label{lem:vol-control}
For any $b\in(0,1)$, 
\begin{equation}
\lim_{N\to\infty} \mu_N\left(\left|\vol{\tilde{\Ti}}-\vol{\ball{0}{\varrho_N}}\right|< \varrho_N^{(d-1)/2+2b}
\right)=1.
\end{equation}
\end{lemma}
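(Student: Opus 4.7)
The plan is an entropy--energy argument: for each admissible shape $T$ of $\Ti$, bound $\mu_N(\Ti=T)$ via the Faber--Krahn inequality and a Taylor expansion of the survival cost around its optimal volume; then take a union bound over shapes, using the boundary-size control from Proposition~\ref{prop:Tiboundaryw}. Fix a connected $T$ with $\ball{\mathcal{x}_N}{(1-\epsilon)\varrho_N}\subset T\subset\ball{\mathcal{x}_N}{\varrho_N+\varrho_N^{\epsilon_1}}$, which are the typical shapes under $\mu_N$ by Theorem~\ref{th:confine} and Proposition~\ref{prop:Ticoverw}. On $\{\Ti=T\}$ every site of $T$ is open by definition of ``truly''-open, so
\begin{equation*}
\P\otimes\bP(\tau_\Oi>N,\Ti=T)\leq p^{|T|}\bP(\tau_{T^c}>N)\leq Cp^{|T|}e^{-N\lambda^{{\rm RW},(1)}_T},
\end{equation*}
the last step by a routine eigenfunction expansion. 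Lemma~\ref{lem:disc-cont} combined with Faber--Krahn yields $\lambda^{{\rm RW},(1)}_T\geq \lambda_1 V^{-2/d}-c\varrho_N^{-4}$ with $V:=\vol{\tilde T}$, and using $|T|=V+O(|\partial T|)$ together with Proposition~\ref{prop:Tiboundaryw} to control the boundary correction and absorb the $N\varrho_N^{-4}$ term into an $o(\varrho_N^{d-1})$ error:
\begin{equation*}
\P\otimes\bP(\tau_\Oi>N,\Ti=T)\leq \exp\!\left(g(V)+o(\varrho_N^{d-1})\right),\qquad g(V):=V\log p-N\lambda_1 V^{-2/d}.
\end{equation*}

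The definition~\eqref{eq:Rn} of $\varrho_N$ encodes precisely the condition that $V_0:=\vol{\ball{0}{\varrho_N}}$ maximizes $g$, so a second-order Taylor expansion at $V_0$ gives $g(V)\leq g(V_0)-c(V-V_0)^2/\varrho_N^d$. Dividing by the lower bound $\P\otimes\bP(\tau_\Oi>N)\geq\exp(g(V_0)-c\varrho_N^{d-1})$ from Lemma~\ref{lem:surv-asymp} yields the per-shape estimate
\begin{equation*}
\mu_N(\Ti=T)\leq \exp\!\left(-c(V-V_0)^2/\varrho_N^d+C\varrho_N^{d-1}\right).
\end{equation*}

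A union bound over shapes completes the argument. Proposition~\ref{prop:Tiboundaryw} permits restriction to $|\partial\Ti|\leq \varrho_N^{d-1+b'}$ for arbitrarily small $b'>0$, and a Peierls/lattice-animal enumeration shows that the number of connected $T$ with that boundary size and with the prescribed inner/outer confinement is at most $\exp(O(\varrho_N^{d-1+b'}))$. Hence, writing $\Delta_0=\varrho_N^{(d-1)/2+2b}$,
\begin{equation*}
\mu_N\bigl(|\vol{\tilde{\Ti}}-V_0|>\Delta_0\bigr)\leq \exp\!\left(C\varrho_N^{d-1+b'}-c\Delta_0^2/\varrho_N^d+C\varrho_N^{d-1}\right)+o(1).
\end{equation*}
The delicate step is the balance inside this exponent. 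Since $\Delta_0^2/\varrho_N^d=\varrho_N^{-1+4b}$, the quadratic deficit dominates the combined entropy and surface error only when $4b>d$; for smaller $b$ the shape enumeration must be refined by stratifying the admissible $T$ according to the Fraenkel asymmetry of $\tilde T$, using the quantitative Faber--Krahn inequality to further penalize shapes far from a ball and parameterizing near-spherical shapes by a small number of profile coefficients (center, radius, low-order spherical-harmonic modes). Settling this balance is the main technical burden; everything else is routine.
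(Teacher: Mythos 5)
Your plan is the paper's: per-shape estimate via eigenfunction expansion and Lemma~\ref{lem:disc-cont}/Faber--Krahn, volume approximation $|T|=\vol{\tilde T}+O(|\partial T|)$, Taylor expansion of $g(V):=V\log p - N\lambda_1 V^{-2/d}$ at its maximizer $V_0:=\vol{\ball{0}{\varrho_N}}$, division by the lower bound on $\P\otimes\bP(\tau_\Oi>N)$ in Lemma~\ref{lem:surv-asymp}, and a union bound over the shapes permitted by Proposition~\ref{prop:Tiboundaryw}. Your normalization of the quadratic penalty is also the correct one: $g''(V_0)=-\frac{2(d+2)N\lambda_1}{d^2}V_0^{-(2d+2)/d}\asymp -\varrho_N^{-d}$ (use $N\asymp\varrho_N^{d+2}$ and $V_0\asymp\varrho_N^d$), so the energetic deficit is $\asymp\varrho_N^{-d}(V-V_0)^2$, and your observation that this fails to dominate an entropy of order $\varrho_N^{d-1+o(1)}$ when $|V-V_0|=\varrho_N^{(d-1)/2+2b}$ unless $4b>d$ is correct arithmetic. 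The paper's displayed Taylor step records the deficit as $-c|\vol{\tilde T}-V_0|^2$ with $c$ a constant depending only on $d,p$; comparing $g(V_0/2)-g(V_0)\asymp\varrho_N^d$ against $(V_0/2)^2\asymp\varrho_N^{2d}$ shows the $\varrho_N^{-d}$ factor cannot in fact be absorbed into such a constant, so you should not expect to reproduce the stated exponent along this route.

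The gap in your submission is that you flag the imbalance without resolving it, and the resolution you gesture at (quantitative Faber--Krahn, Fraenkel-asymmetry stratification, spherical-harmonic profiles) is neither what the paper does nor a promising direction. Fraenkel asymmetry constrains the \emph{shape}, not the \emph{volume}: a perfect ball of radius $\varrho_N-\varrho_N^{-1/2+b}$ has zero asymmetry and volume deficit $\asymp\varrho_N^{d-\frac12+b}$, so near-sphericity places no constraint on the volume fluctuation you are trying to bound. What the balance you have set up \emph{does} yield, after taking the boundary exponent $b'$ in Proposition~\ref{prop:Tiboundaryw} as small as you wish, is $\mu_N\big(|\vol{\tilde\Ti}-V_0|\ge\varrho_N^{d-\frac12+b}\big)\to 0$ for every $b>0$. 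That weaker conclusion suffices for the only place the lemma is used: in the proof of Theorem~\ref{th:Ticover}, the volume deficit produced by a non-truly-open site $x\in\ball{\mathcal{x}_N}{\varrho_N-\varrho_N^{\epsilon_2}}$ is $\gtrsim\varrho_N^{d\epsilon_2}$, and $\epsilon_2<1$ may be chosen so that $d\epsilon_2$ exceeds both $d-1+\epsilon_1$ and $d-\frac12+b$. You should therefore finish your proof with the corrected exponent $d-\frac12+b$ in place of $\frac{d-1}{2}+2b$, rather than attempting to salvage the stronger exponent by refining the shape enumeration.
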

\noindent\textbf{Proof of Lemma~\ref{lem:vol-control}.}
Referring to Theorem~\ref{th:confine} and Propositions~\ref{prop:Ticoverw} and~\ref{prop:Tiboundaryw}, we introduce the set of possible shapes of $\Ti$ for $b\in(0,1)$: 
\begin{equation}
\begin{split}
 \mathbb{T}_b:=
 \bigl\{T\subset\Z^d\colon  \ball{x}{0.9\varrho_N}\subset T\subset \ball{x}{\varrho_N+\varrho_N^{\epsilon_1}}\textrm{ for}\\ 
\textrm{some }
x\in\ball{0}{\varrho_N},\textrm{ and } |\partial T|\le \varrho_N^{d-1+b}\bigr\}
\end{split}
\end{equation}
so that $\lim_{N\to\infty}\mu_N(\Ti\in\mathbb{T}_b)=1$. The cardinality of this set is bounded by
\begin{equation}
|\mathbb{T}_b|\le \exp\{c\varrho_N^{d-1+2b}\}
\label{eq:Tentropy}
\end{equation}
simply by considering the choice of $\varrho_N^{d-1+b}$ points from $\ball{0}{3\varrho_N}$. Having controlled the entropy, we estimate next the probability $\mu_N(\Ti=T)$ for each $T\in\mathbb{T}_b$. With the help of the eigenfunction expansion, one finds the upper bound 
\begin{equation}
\begin{split}
\P\otimes\bP\left(\tau_{T^c}>N, \Ti=T\right)
 &\le \P(\Oi\cap T=\emptyset)\bP(\tau_{T^c}>N)\\
 &\le |T|^{1/2}\exp\left\{|T|\log p-N\lambda^{{\rm RW},(1)}_T\right\}
\end{split}
\label{eq:T=Tubd}
\end{equation}
for general $T\subset\Z^d$. See, for example,~\cite[(2.21)]{K16}. Observe that since $\tilde{T}\subset \bigcup_{x\in T}(x+[-2,2]^d)$, we have $|T|\ge \vol{\tilde{T}}-c|\partial T|$ and hence for $T\in\mathbb{T}_b$, 
\begin{equation}
 |T|\ge \vol{\tilde{T}}-c\varrho_N^{d-1+b}.
\label{eq:vol-approx}
\end{equation}
On the other hand, by~\eqref{eq:disc>cont} and the Faber--Krahn inequality (see, for example,~\cite[pp.87--92]{Chavel84}), for any $T\in\mathbb{T}_b$, we have
\begin{equation}
\begin{split}
\lambda^{{\rm RW},(1)}_T
&\ge \lambda_{\tilde{T}}^{(1)}-c\varrho_N^{-4}\\
&\ge \lambda_{|\tilde{T}|}
-c\varrho_N^{-4},
\end{split}
\label{eq:ev-approx}
\end{equation}
where for $r>0$, we denote by $\lambda_r$ the principal Dirichlet eigenvalue of $-\frac{1}{2d}\Delta$ in a ball with volume $r$. Substituting~\eqref{eq:vol-approx} and~\eqref{eq:ev-approx} into~\eqref{eq:T=Tubd}, we obtain 
\begin{equation}
\begin{split}
& \P\otimes\bP\left(\tau_{T^c}>N, \Ti=T\right)\\
&\quad \le |T|^{1/2}\exp\left\{\vol{\tilde{T}}\log p-N{\lambda_{|\tilde{T}|}}
 +c\varrho_N^{d-1+b}\right\}. 
\end{split}
\label{eq:T=Tubd2}
\end{equation}
Suppose that $T\in\mathbb{T}_b$ satisfies $|\vol{\tilde{T}}-\vol{\ball{0}{\varrho_N}}|\ge\varrho_N^{(d-1)/2+2b}$. Then, since 
the function
\begin{equation}
 r\mapsto r\log p-N\lambda_r
=r\log p-\frac{N\lambda_1}{r^{2/d}}
\end{equation}
is twice-differentiable and maximized at $\vol{\ball{0}{\varrho_N}}$ (cf.~\eqref{eq:Rn}), one finds by the Taylor expansion that
\begin{equation}
\begin{split}
&\vol{\tilde{T}}\log p-N{\lambda_{|\tilde{T}|}}\\
&\quad\le \vol{\ball{0}{\varrho_N}}\log p-N\lambda_{\ball{0}{\varrho_N}}^{(1)}-c\left|\vol{\tilde{T}}|-\vol{\ball{0}{\varrho_N}}\right|^2\\
&\quad\le \vol{\ball{0}{\varrho_N}}\log p-N\lambda_{\ball{0}{\varrho_N}}^{(1)}-c\varrho_N^{d-1+4b}.
\end{split}
\end{equation}
Substituting this into~\eqref{eq:T=Tubd2} and comparing with Lemma~\ref{lem:surv-asymp}, we obtain $\mu_N(\Ti=T)\le \exp\{-c\varrho_N^{d-1+4b}\}$. 
Thanks to~\eqref{eq:Tentropy}, we can use the union bound to conclude the proof of Lemma~\ref{lem:vol-control}.\qed

\medskip
\noindent{\textbf{Proof of Theorems \ref{th:Ticover}}.} 
Thanks to Lemma~\ref{lem:odensity}, we can restrict our consideration to the event
\begin{equation}
 \bigcap_{x\in \ball{0}{2\varrho_N}} \, \bigcap_{{(\log N)^3}\leq l\leq {\varrho_N}} \left\{x\in \Oi \textrm{ and }{\frac{|\Oi \cap \ball{x}{l}|}{|\ball{x}{l}|}} < \delta \right\}, 
\label{eq:lem3.1}
\end{equation}
that is, any $x\in\ball{0}{2\varrho_N}$ is either open or has $\delta$-fraction of closed sites in its $l$-neighborhood for all $l\in[(\log N)^3,\varrho_N]$. In addition, by Lemma~\ref{lem:vol-control} with $b=\epsilon_1/2$ where $\epsilon_1 $ is as in Theorem~\ref{th:confine}, we can further assume that
\begin{equation}
 \left|\vol{\tilde{\Ti}}-\vol{\ball{0}{\varrho_N}}\right|< \varrho_N^{(d-1)/2+\epsilon_1}. 
\label{eq:lem5.3}
\end{equation}

Let $\epsilon_2>(d-1+\epsilon_1)/d$ and suppose that there exists $x\in\ball{\mathcal{x}_N}{\varrho_N-\varrho_N^{\epsilon_2}}\setminus\Ti$. Then there exists at least one closed site $y\in\ball{x}{(\log N)^5}$, and by~\eqref{eq:lem3.1}, more than $\delta$ fraction of the points in the ball $\ball{y}{\varrho_N^{\epsilon_2}/2}\subset \ball{\mathcal{x}_N}{\varrho_N}$ must be closed. Recalling~\eqref{eq:vol-approx} and that $\tilde{\Ti}\subset \ball{\mathcal{x}_N}{\varrho_N+2\varrho_N^{\epsilon_1}}$ by the definition of $\Ti$, we find that 
\begin{equation}
\begin{split}
\vol{\tilde{\Ti}}
&\le |\Ti|+\varrho_N^{d-1+b}\\
&\le |\ball{\mathcal{x}_N}{\varrho_N+2\varrho_N^{\epsilon_1}}|-\delta|\ball{y}{\varrho_N^{\epsilon_2}/2}|+\varrho_N^{d-1+\epsilon_1}\\
&\le \vol{\ball{0}{\varrho_N}}+c\varrho_N^{d-1+\epsilon_1}-c'\varrho_N^{d\epsilon_2}\\
&\le \vol{\ball{0}{\varrho_N}}-c\varrho_N^{d-1+\epsilon_1},
\end{split}
\end{equation}
which contradicts~\eqref{eq:lem5.3}. \qed

\medskip  

\begin{remark}[Finer asymptotics of survival probability]
There is a conjecture on the precise second order asymptotics of the survival probability in the literature: there exists $a_1>0$ such that
\begin{equation}
\P\otimes\bP(\tau_\Oi>N) =\exp \left\{-c(d,p)N^{\frac{d}{d+2}}-a_1N^{\frac{d-1}{d+2}}+o(N^{\frac{d-1}{d+2}})\right\}. 
\end{equation}
See~\cite{L84} and the bottom of~p.~76 in~\cite{B02} for more information. Lemma~\ref{lem:surv-asymp} gives a lower bound of this form while the currently best known upper bound is
\begin{equation}
\P\otimes\bP(\tau_\Oi>N)
\le\exp\left\{-c(d,p)N^{\frac{d}{d+2}}+N^{\frac{d-\epsilon}{d+2}}\right\}
\end{equation} 
for some $\epsilon\in(0,1)$. See~\cite[(2.40)]{B02} or~\cite[Theorem~5.6 on page 208]{S98}. 

Based on what we have proved, we can get a refined upper bound on the survival probability. Theorem~\ref{th:Tiboundary} implies that $\lim_{N\to\infty}\mu_N(\Ti\in\mathbb{T}_{0+})=1$ with
\begin{equation}
\begin{split}
 \mathbb{T}_{0+}:=\Bigl\{T\subset\Z^d\colon  \ball{x}{0.9\varrho_N}\subset T\subset \ball{x}{\varrho_N+\varrho_N^{\epsilon_1}}\textrm{ for}\\ 
\textrm{some }x\in\ball{0}{\varrho_N}\textrm{ and }
|\partial T|\le \varrho_N^{d-1}(\log N)^a\Bigr\}.
\end{split}
\end{equation}
Just as in~\eqref{eq:Tentropy}, we have 
\begin{equation}
|\mathbb{T}_{0+}|\le \exp\{c\varrho_N^{d-1}(\log N)^{a+1}\}
\end{equation}
and then by Lemma~\ref{lem:SinTi} and a variant of~\eqref{eq:T=Tubd2}, we obtain
\begin{equation}
\begin{split}
&\P\otimes\bP(\tau_\Oi>N)\\
&\quad\sim \P\otimes\bP(\tau_{\Ti^c}>N, \Ti\in\mathbb{T}_{0+})\\
&\quad\le |\mathbb{T}_{0+}|\sup_{T\in\mathbb{T}_{0+}}|T|^{1/2}\exp\left\{\vol{\tilde{T}}\log p-N\lambda_{B_{\tilde{T}}}+c\varrho_N^{d-1}(\log N)^{a+1}\right\}\\
&\quad\le \exp\left\{\vol{\ball{0}{\varrho_N}}\log p-N\lambda_{\ball{0}{\varrho_N}}+c\varrho_N^{d-1}(\log N)^{a+1}\right\}\\
&\quad= \exp\left\{-c(d,\log(1/p))N^{\frac{d}{d+2}}+cN^{\frac{d-1}{d+2}}(\log N)^{a+1}\right\}.
\end{split}
\end{equation}
\end{remark}

\appendix
\section{Estimates for eigenvalues and eigenfunctions}
\label{app:eigen}
In this section, we collect some estimates on eigenvalues and eigenfunctions, including Lemma~\ref{lem:disc-cont}, and then prove~\eqref{eq:ev},~\eqref{eq:ef} (used in the proof of Lemma~\ref{lem:end-inside}) and Lemma~\ref{lem:surv-asymp}. Recall that $\lambda^{(k)}_{T}$ and $\phi^{(k)}_{T}$ are the $k$-th smallest Dirichlet eigenvalue and corresponding eigenfunction with $\|\phi^{(k)}_{T}\|_2=1$ for $-\frac{1}{2d}\Delta$ in $T\subset\R^d$ and $\lambda^{{\rm RW},(k)}_{T}$ and $\phi^{{\rm RW},(k)}_{T}$ are their discrete space counterparts. 

We begin with the following comparison lemma which includes Lemma~\ref{lem:disc-cont}.
\begin{lemma}
\label{lem:a.1}
For any $d\ge 1$ and $k\ge 1$, there exists $\gamma_k>0$ such that for all sufficiently large $R>0$, the following bounds hold:
\begin{align}
\left|\lambda^{{\rm RW},(k)}_{\ball{0}{R}}-\lambda^{(k)}_{\ball{0}{R}}\right|
&\le \gamma_k R^{-3},
\label{eq:disc-cont-ball}\\
{\min_{|y|\le 1}}\phi_{\ball{0}{R}}^{{\rm RW},(1)}(y)&\ge \gamma_1^{-1} R^{-d/2},
\label{eq:EFlbd}
\end{align}
and for any $x\in\Z^d$ and $T\supset\ball{x}{\varrho_N/2}$, 
\begin{equation}
 \lambda^{{\rm RW},(1)}_T\ge\lambda_{\tilde{T}}^{(1)}-{\gamma_1} \varrho_N^{-4}.
\end{equation}
\end{lemma}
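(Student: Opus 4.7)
The plan is to compare the discrete and continuum Dirichlet forms through test-function arguments. The essential input is that if $f:\R^d\to\R$ is smooth and supported in a ball of radius comparable to $R$, then Taylor expanding $f(x+e)-f(x)$ through third order and summing by parts yields
\begin{equation*}
 \tfrac{1}{2d}\sum_{\{x,y\}\subset\Z^d,\,x\sim y}(f(x)-f(y))^2
 = \tfrac{1}{2d}\int_{\R^d}|\nabla f|^2\,dx + \mathcal{R}(f),
\end{equation*}
together with an analogous comparison of $L^2$ norms. For continuum eigenfunctions on $\ball{0}{R}$ one has $|D^m\phi^{(k)}_{\ball{0}{R}}|\le C_k R^{-m-d/2}$ by scaling and interior Schauder estimates, so that $\mathcal{R}(f)$ admits a quantitative bound. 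Sharp discrete interior regularity estimates for the discrete eigenfunctions, in particular $|\phi^{{\rm RW},(k)}_{\ball{0}{R}}(x)-\phi^{{\rm RW},(k)}_{\ball{0}{R}}(y)|\le C_k R^{-d/2-1}|x-y|$ for $x,y$ far from the boundary, are available from standard random-walk heat-kernel estimates.

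For~\eqref{eq:disc-cont-ball} I apply the min--max principle in both directions. For the upper bound on $\lambda^{{\rm RW},(k)}_{\ball{0}{R}}$, I take $\phi^{(1)}_{\ball{0}{R}},\dotsc,\phi^{(k)}_{\ball{0}{R}}$, multiply each by a smooth cutoff supported in $\ball{0}{R-2}$ so that the restrictions to $\Z^d$ vanish on $\partial\ball{0}{R}$, and use them as trial vectors; the boundary layer of width $O(1)$, where $|\phi^{(k)}|\lesssim R^{-d/2-1}$ by the Dirichlet condition and the gradient bound, is what produces the $R^{-3}$ loss. For the lower bound, I convolve the discrete eigenfunctions with a smooth radius-$1$ bump to obtain continuum trial functions supported in $\ball{0}{R+C}$ and invoke the scaling $\lambda^{(k)}_{\ball{0}{R+C}}=\lambda^{(k)}_{\ball{0}{R}}(1+O(R^{-1}))$. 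The third assertion, for general $T\supset\ball{x}{\varrho_N/2}$, is proved by the same lower-bound argument applied to the discrete principal eigenfunction on $T$; no boundary cutoff is needed, the Taylor remainder is of pure bulk type and therefore multiplicative of order $O(\varrho_N^{-2})$ relative to the form, and this factor is absorbed by $\lambda^{{\rm RW},(1)}_T\le\lambda^{{\rm RW},(1)}_{\ball{x}{\varrho_N/2}}=O(\varrho_N^{-2})$, yielding the net additive $O(\varrho_N^{-4})$ error.

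For~\eqref{eq:EFlbd}, let $u=\phi^{{\rm RW},(1)}_{\ball{0}{R}}$. Cauchy--Schwarz against $\|u\|_2=1$ yields $\|u\|_\infty\ge c_d R^{-d/2}$. Perron--Frobenius gives $u>0$ in $\ball{0}{R}$, and~\eqref{eq:disc-cont-ball} forces $\lambda^{{\rm RW},(1)}_{\ball{0}{R}}R^2=O(1)$, so the discrete elliptic Harnack inequality applies on every ball $\ball{y_0}{R/4}\subset\ball{0}{R/2}$ with a constant depending only on $d$. To produce a point $y_0\in\ball{0}{R/2}$ where $u(y_0)\gtrsim R^{-d/2}$, I reuse the Dirichlet-form comparison to show that $u$ is close in $L^2$ to the rescaled continuum principal eigenfunction (whose mass lives in $\ball{0}{R/2}$), which gives $\sum_{|y|\le R/2}u(y)^2\ge c$; pigeonhole then produces such a $y_0$, and one application of Harnack transports the bound to all $|y|\le 1$.

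The main technical obstacle is making the Dirichlet-form comparison quantitative enough to attain the stated $R^{-3}$ and $\varrho_N^{-4}$ rates. Two ingredients need to be executed with care: (a) the cancellation of the cubic cross term $\partial_e f\,\partial_e^2 f$ under summation by parts, which reduces what would be an $R^{-3}$ interior error to an $R^{-4}$ one (so that the $R^{-3}$ loss for the ball comes from the boundary term alone); and (b) the discrete interior regularity estimates for $\phi^{{\rm RW},(k)}$ needed in the lower-bound direction, so that convolving with a radius-$1$ bump inflates neither the continuum Dirichlet form nor shrinks the $L^2$ norm by more than a factor $1+O(\varrho_N^{-2})$. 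Once these are in place, the $L^2$ closeness of discrete and continuum principal eigenfunctions invoked in the proof of~\eqref{eq:EFlbd} follows from~\eqref{eq:disc-cont-ball} together with the continuum spectral gap by a standard projection argument.
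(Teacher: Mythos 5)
Your proposal is essentially correct in outline, but it takes a genuinely different route from the paper: the paper does not reprove any of these estimates, but rather cites~\cite{Weinberger1958} (specifically (3.27), (6.11) and (6.9) there) for~\eqref{eq:disc-cont-ball} and for the third assertion, and cites~\cite[Lemma~2.1(b)]{B94} for the uniform eigenfunction comparison $\sup_{y}|\phi_{\ball{0}{R}}^{(1)}(y)-\phi_{\ball{0}{R}}^{{\rm RW},(1)}(y)|\le cR^{-d/2-1}$, which together with the elementary lower bound $\phi^{(1)}_{\ball{0}{R}}(y)\ge c^{-1}R^{-d/2}$ for $|y|\le 1$ gives~\eqref{eq:EFlbd} in one line. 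Your approach instead reproves the eigenvalue comparison from scratch via min--max with cutoff/convolution trial functions and a quantitative discrete--continuum Dirichlet form comparison; you correctly identify where the $R^{-3}$ boundary error and the multiplicative $O(R^{-2})$ bulk error come from, and the reduction of the third assertion to the multiplicative inequality $\lambda_{\tilde T}^{(1)}\le\lambda_T^{{\rm RW},(1)}(1+c\varrho_N^{-2})$ together with $\lambda_T^{{\rm RW},(1)}=O(\varrho_N^{-2})$ matches what the cited Weinberger estimate gives. For~\eqref{eq:EFlbd} your Cauchy--Schwarz $\to$ $L^2$-closeness $\to$ pigeonhole $\to$ Harnack route is a legitimate alternative to citing Bolthausen, though two details would need to be spelled out: justifying a Harnack inequality for the eigenfunction (e.g.\ via the parabolic Harnack inequality applied to $(1-\lambda^{{\rm RW},(1)}_{\ball{0}{R}})^{-t}\phi^{{\rm RW},(1)}_{\ball{0}{R}}(x)$, since the eigenfunction is not exactly harmonic), and replacing ``one application of Harnack'' by a finite Harnack chain, since the pigeonhole point $y_0$ can sit anywhere in $\ball{0}{R/2}$. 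Overall: same content, but self-contained and therefore considerably more laborious than the paper's citation-based proof; carrying out the Dirichlet-form bookkeeping to genuinely attain the stated rates is the bulk of the work you have deferred.
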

\noindent
\textbf{Proof of Lemma~\ref{lem:a.1}.}
The first assertion can be found in~\cite[(3.27) and (6.11)]{Weinberger1958}. The second assertion follows from~\cite[Lemma~2.1(b)]{B94}, which states that
\begin{equation}
{\sup_{y\in \ball{0}{R}}}\left|\phi_{\ball{0}{R}}^{(1)}(y)-\phi_{\ball{0}{R}}^{{\rm RW},(1)}(y)\right|
\le cR^{-d/2-1},
\end{equation}
and the fact that ${\min_{y\in\Z^d:|y|\le 1}}\phi_{\ball{0}{R}}^{(1)}(y)\ge c^{-1}R^{-d/2}$. 
The third assertion follows from~\cite[(6.9)]{Weinberger1958}, which states that
\begin{equation}
\lambda_{\tilde{T}}^{(1)}\le \lambda^{{\rm RW},(1)}_T(1+c\varrho_N^{-2}),
\end{equation}
and the bound $\lambda^{{\rm RW},(1)}_T\le c\varrho_N^{-2}$ that follows from the assumption $T\supset\ball{x}{\varrho_N/2}$. 
\qed 

Next we restate and prove~\eqref{eq:ev} 
and~\eqref{eq:ef}.  
\begin{lemma}
\label{lem:specest}
There exist $c_7,c_8>0$ such that if $\ball{x}{(1-\eps)\varrho_N} \subset\Bi \subset\ball{x}{\varrho_N+\varrho_N^{\epsilon_1}}$ for some $x\in\Z^d$ and $\epsilon>0$ sufficiently small depending only on the dimension $d$, then the following bounds hold: 
\begin{align}
\lambda^{{\rm RW},(2)}_{\Bi}-\lambda^{{\rm RW},(1)}_{\Bi}
&\ge c_7\varrho_N^{-2},
\tag{EV}\\
\left\|\phi^{{\rm RW},(1)}_{\Bi}\right\|_\infty
&\le c_8\varrho_N^{-d/2}.
\tag{EF}
\end{align}
\end{lemma}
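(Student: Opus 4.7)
\textbf{Proof proposal for Lemma~\ref{lem:specest}.}

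My plan is to exploit domain monotonicity of Dirichlet eigenvalues to sandwich $\lambda^{{\rm RW},(k)}_{\Bi}$ between the corresponding eigenvalues for the inner ball $\ball{x}{(1-\eps)\varrho_N}$ and the outer ball $\ball{x}{\varrho_N+\varrho_N^{\epsilon_1}}$, and then invoke Lemma~\ref{lem:a.1} to transfer to the continuum. By translation invariance we may assume $x=0$. For every $k \in \N$, domain monotonicity gives
\begin{equation*}
\lambda^{{\rm RW},(k)}_{\ball{0}{\varrho_N+\varrho_N^{\epsilon_1}}}
\;\le\;
\lambda^{{\rm RW},(k)}_{\Bi}
\;\le\;
\lambda^{{\rm RW},(k)}_{\ball{0}{(1-\eps)\varrho_N}}.
\end{equation*}
Combining this with~\eqref{eq:disc-cont-ball} and the scaling $\lambda^{(k)}_{\ball{0}{R}} = \lambda^{(k)}_{\ball{0}{1}} R^{-2}$ yields
\begin{equation*}
\lambda^{{\rm RW},(1)}_{\Bi} \le \frac{\lambda^{(1)}_{\ball{0}{1}}}{(1-\eps)^2\varrho_N^2} + O(\varrho_N^{-3}),
\qquad
\lambda^{{\rm RW},(2)}_{\Bi} \ge \frac{\lambda^{(2)}_{\ball{0}{1}}}{(\varrho_N+\varrho_N^{\epsilon_1})^2} + O(\varrho_N^{-3}).
\end{equation*}
Subtracting and using the strict continuum gap $\lambda^{(2)}_{\ball{0}{1}} - \lambda^{(1)}_{\ball{0}{1}} > 0$, one obtains $\lambda^{{\rm RW},(2)}_{\Bi} - \lambda^{{\rm RW},(1)}_{\Bi} \ge c_7\varrho_N^{-2}$ for a suitable $c_7 > 0$, provided $\eps$ is chosen small enough depending only on $d$. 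This proves~\eqref{eq:ev}.

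For the sup-norm bound~\eqref{eq:ef}, I plan to use the standard spectral/semigroup identity. Writing $\phi := \phi^{{\rm RW},(1)}_{\Bi}$ and $\lambda := \lambda^{{\rm RW},(1)}_{\Bi}$, eigenfunction invariance under the killed semigroup gives
\begin{equation*}
\phi(y) \;=\; (1-\lambda)^{-n} \sum_{z \in \Bi} p^{\Bi}_n(y,z)\,\phi(z)
\;\le\; (1-\lambda)^{-n}\, p^{\Bi}_{2n}(y,y)^{1/2}\,\|\phi\|_2
\end{equation*}
by the Cauchy--Schwarz inequality and the Chapman--Kolmogorov identity, where $\|\phi\|_2 = 1$. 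Choosing $n = \lfloor \varrho_N^2 \rfloor$, the already established bound $\lambda = O(\varrho_N^{-2})$ forces $(1-\lambda)^{-n} \le C$, while the standard on-diagonal Gaussian heat-kernel bound (see, e.g.,~\cite[Theorem~6.28]{B17}) gives $p^{\Bi}_{2n}(y,y) \le p^{\Z^d}_{2n}(y,y) \le C n^{-d/2} \le C' \varrho_N^{-d}$. Combining these yields $\phi(y) \le c_8 \varrho_N^{-d/2}$ uniformly in $y$, which is~\eqref{eq:ef}.

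The main obstacle, and the step requiring the smallness of $\eps$, is closing the spectral gap argument: the upper bound on $\lambda^{{\rm RW},(1)}_{\Bi}$ picks up a factor $(1-\eps)^{-2}$ while the lower bound on $\lambda^{{\rm RW},(2)}_{\Bi}$ only loses $(1+\varrho_N^{\epsilon_1-1})^{-2}$. The surplus continuum gap $\lambda^{(2)}_{\ball{0}{1}} - \lambda^{(1)}_{\ball{0}{1}}$ must dominate the $O(\eps)\varrho_N^{-2}$ error, which is automatic for $\eps$ small. The sup-norm argument, by contrast, is robust and uses only the shape-independent heat-kernel estimate together with the size of $\lambda$.
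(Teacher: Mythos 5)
Your proposal follows the same two-step approach as the paper's proof: for~(EV), domain monotonicity of Dirichlet eigenvalues combined with the discrete–continuum comparison~\eqref{eq:disc-cont-ball} of Lemma~\ref{lem:a.1} and the continuum scaling relation; for~(EF), the semigroup eigenfunction identity followed by Cauchy–Schwarz, Chapman–Kolmogorov, and the on-diagonal local limit theorem, closed off by the $O(\varrho_N^{-2})$ control on $\lambda^{{\rm RW},(1)}_{\Bi}$. The only difference is cosmetic — you fix the time parameter as $n=\lfloor\varrho_N^2\rfloor$ whereas the paper takes $n=\lfloor 1/\lambda^{{\rm RW},(1)}_{\Bi}\rfloor$, which are comparable since $\lambda^{{\rm RW},(1)}_{\Bi}\asymp\varrho_N^{-2}$.
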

\noindent\textbf{Proof of Lemma~\ref{lem:specest}.}
In order to show the first assertion~\eqref{eq:ev}, recall first that the continuum eigenvalue satisfies the scaling relation $\lambda^{(k)}_{\ball{0}{R}}=R^{-2}\lambda^{(k)}_{\ball{0}{1}}$. Combining this with~\eqref{eq:disc-cont-ball}, we get the following bounds:
\begin{equation}
\begin{split}
 \lambda^{{\rm RW},(1)}_{\Bi}&\le \lambda^{{\rm RW},(1)}_{\ball{0}{\varrho_N+\varrho_N^{\epsilon_1}}}\\
&\le \lambda^{(1)}_{\ball{0}{\varrho_N+\varrho_N^{\epsilon_1}}}+{\gamma_1}\varrho_N^{-3}\\
 &\le \varrho_N^{-2}\lambda^{(1)}_{\ball{0}{1}}+c\varrho_N^{-3+\epsilon_1},\\
\end{split}
\label{eq:EVubd}
\end{equation}
and
\begin{equation}
\begin{split}
 \lambda^{{\rm RW},(2)}_{\Bi}
&\ge \lambda^{{\rm RW}, (2)}_{\ball{0}{(1-\epsilon)\varrho_N}}\\
&\ge \lambda^{(2)}_{\ball{0}{(1-\epsilon)\varrho_N}}-{\gamma_2}\varrho_N^{-3}\\
 &\ge \varrho_N^{-2}\lambda^{(2)}_{\ball{0}{1}}-c\epsilon\varrho_N^{-2}.
\end{split}
\end{equation}
Since $\lambda^{(1)}_{\ball{0}{1}}<\lambda^{(2)}_{\ball{0}{1}}$, the desired bound~\eqref{eq:ev} follows for sufficiently small $\epsilon>0$. 

Next, we show the second assertion~\eqref{eq:ef}. By the eigenvalue equation for the semigroup, it follows for any $x\in\Bi$ that
\begin{equation}
\begin{split}
\phi^{{\rm RW},(1)}_{\Bi}(x)
&=\left(1-\lambda^{{\rm RW},(1)}_{\Bi}\right)^{-\lfloor1/\lambda^{{\rm RW},(1)}_{\Bi}\rfloor}\sum_{y\in\Bi}p^\Bi_{\lfloor 1/\lambda^{{\rm RW},(1)}_{\Bi}\rfloor}(x,y)\phi^{{\rm RW},(1)}_{\Bi}(y)\\
&\le \left(1-\lambda^{{\rm RW},(1)}_{\Bi}\right)^{-\lfloor 1/\lambda^{{\rm RW},(1)}_{\Bi}\rfloor}\left\|p^\Bi_{\lfloor 1/\lambda^{{\rm RW},(1)}_{\Bi}\rfloor}(x,\cdot)\right\|_2
\left\|\phi^{{\rm RW},(1)}_{\Bi}\right\|_2,
\label{eq:Nash}
\end{split}
\end{equation}
where we have used the Schwarz inequality in the second line. Then by the symmetry of the transition kernel $p^\Bi$, the Chapman--Kolmogorov identity and the normalization $\|\phi_\Bi^{(1)}\|_2=1$, we can further rewrite~\eqref{eq:Nash} as
\begin{equation}
\begin{split}
|\phi^{{\rm RW},(1)}_{\Bi}(x)|
 &\le\left(1-\lambda^{{\rm RW},(1)}_{\Bi}\right)^{-\lfloor 1/\lambda^{{\rm RW},(1)}_{\Bi}\rfloor}p^\Bi_{2\lfloor 1/\lambda^{{\rm RW},(1)}_{\Bi}\rfloor}(x,x)^{1/2}\\
 &\le c\left(\lambda^{{\rm RW},(1)}_{\Bi}\right)^{d/4},
\end{split}
\end{equation}
where we used the local limit theorem as an upper bound.
Since we have $\lambda^{{\rm RW},(1)}_{\Bi}\ge c\varrho_N^{-2}$ similarly to~\eqref{eq:EVubd}, the last line is bounded by $c\varrho_N^{-d/2}$.
\qed

In order to prove Lemma~\ref{lem:surv-asymp}, we use the eigenfunction expansion for the semigroup generated by the random walk killed upon exiting $\ball{0}{\varrho_N}$, whose generator we denote by $Q_{\varrho_N}$. Due to the periodicity of the random walk, it is convenient to consider the semigroup generated by $Q_{\varrho_N}^2$. Let $\Z^d_{\rm e}$ ($\Z^d_{\rm o}$) and $1_{\rm e}$ ($1_{\rm o}$) denote the set of even (odd) sites and its indicator function, respectively. 
\begin{lemma}
\label{lem:parity}
For any positive integer $k\le |\ball{0}{\varrho_N}|/2$, the $k$-th largest eigenvalues of $Q_{\varrho_N}^2$ is $(1-\lambda_{\ball{0}{\varrho_N}}^{{\rm RW},(k)})^2$. Moreover the eigenspace corresponding to $(1-\lambda_{\ball{0}{\varrho_N}}^{{\rm RW},(1)})^2$ is spanned by $\phi_{\ball{0}{\varrho_N}}^{{\rm RW},(1)}1_{\rm e}$ and $\phi_{\ball{0}{\varrho_N}}^{{\rm RW},(1)}1_{\rm o}$. 
\end{lemma}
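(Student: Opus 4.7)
\noindent\textbf{Proof plan for Lemma~\ref{lem:parity}.}
The key observation is that the simple random walk is bipartite: every step flips parity, so $Q_{\varrho_N}$ maps functions supported on $\Z^d_{\rm e}\cap\ball{0}{\varrho_N}$ into functions supported on $\Z^d_{\rm o}\cap\ball{0}{\varrho_N}$ and vice versa. Ordering the sites so that even ones precede odd ones, $Q_{\varrho_N}$ takes the block--antidiagonal form
\begin{equation*}
Q_{\varrho_N} = \begin{pmatrix} 0 & A \\ A^T & 0 \end{pmatrix},
\end{equation*}
where $A$ is the rectangular matrix of transition probabilities from odd to even sites (symmetry of the walk makes the lower-left block its transpose). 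Squaring produces the block-diagonal form $Q_{\varrho_N}^2 = \mathrm{diag}(AA^T, A^TA)$, so the spectrum of $Q_{\varrho_N}^2$ consists of the squared singular values of $A$, each appearing once on the even block (as an eigenvalue of $AA^T$) and once on the odd block (as an eigenvalue of $A^TA$).

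Next I would diagonalize $Q_{\varrho_N}$ via the SVD $A = U\Sigma V^T$ with $\Sigma = \mathrm{diag}(\sigma_1,\sigma_2,\ldots)$ and $\sigma_1 \ge \sigma_2 \ge \cdots \ge 0$. A direct check shows that $\frac{1}{\sqrt{2}}(u_i,\pm v_i)$ is an eigenvector of $Q_{\varrho_N}$ with eigenvalue $\pm\sigma_i$. Hence the spectrum of $Q_{\varrho_N}$ is symmetric about $0$, and its top $\lfloor|\ball{0}{\varrho_N}|/2\rfloor$ eigenvalues (counted with multiplicity) are exactly the non-negative $\sigma_i$'s. Writing $\mu_k := 1-\lambda^{{\rm RW},(k)}_{\ball{0}{\varrho_N}}$ for the $k$-th largest eigenvalue of $Q_{\varrho_N}$, this yields $\sigma_k = 1-\lambda^{{\rm RW},(k)}_{\ball{0}{\varrho_N}}$ for $k \le |\ball{0}{\varrho_N}|/2$, and therefore the $k$-th largest eigenvalue of $Q_{\varrho_N}^2$ equals $\sigma_k^2 = (1-\lambda^{{\rm RW},(k)}_{\ball{0}{\varrho_N}})^2$, as claimed.

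For the description of the top eigenspace, Perron--Frobenius applied to the irreducible non-negative matrix $A^TA$ (irreducibility coming from the fact that the $2$-step walk restricted to $\Z^d_{\rm e}\cap\ball{0}{\varrho_N}$, and separately to the odd counterpart, connects any two same-parity sites inside a Euclidean ball) shows that $\sigma_1$ is a simple singular value of $A$ and that the corresponding $u_1,v_1$ may be chosen strictly positive. Therefore $\phi_{\ball{0}{\varrho_N}}^{{\rm RW},(1)}$ agrees up to sign with $\frac{1}{\sqrt{2}}(u_1,v_1)$, so its restrictions $\phi_{\ball{0}{\varrho_N}}^{{\rm RW},(1)}1_{\rm e}$ and $\phi_{\ball{0}{\varrho_N}}^{{\rm RW},(1)}1_{\rm o}$ are proportional to $u_1$ and $v_1$ respectively. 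These are linearly independent eigenvectors of $Q_{\varrho_N}^2$ with eigenvalue $\sigma_1^2$, one in each block, and together they span the $2$-dimensional $\sigma_1^2$-eigenspace. The only non-routine step is verifying the simplicity of $\sigma_1$, which reduces to the irreducibility of the $2$-step walk on either parity class of $\ball{0}{\varrho_N}$; this is standard for a large Euclidean ball but should be recorded explicitly.
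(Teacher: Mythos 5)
Your proof is correct and reaches the same conclusion, but the mechanism you use is genuinely different from the paper's. The paper exploits the parity flip at the level of a single eigenfunction: given $Q_{\varrho_N}\phi_\zeta=\zeta\phi_\zeta$, one checks directly that $Q_{\varrho_N}(1_{\rm e}\phi_\zeta-1_{\rm o}\phi_\zeta)=-\zeta(1_{\rm e}\phi_\zeta-1_{\rm o}\phi_\zeta)$, so the spectrum of $Q_{\varrho_N}$ is symmetric about zero; the squaring and the two-dimensionality of the top eigenspace of $Q_{\varrho_N}^2$ then follow from the already-known simplicity of $\lambda^{{\rm RW},(1)}_{\ball{0}{\varrho_N}}$ (standard Perron--Frobenius for the irreducible non-negative operator $Q_{\varrho_N}$) together with the fact that $Q_{\varrho_N}^2$ preserves the even/odd decomposition. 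Your route writes $Q_{\varrho_N}$ in block-antidiagonal form, squares to get $\mathrm{diag}(AA^T,A^TA)$, and diagonalizes via the SVD of $A$; the $\pm$-symmetry of the spectrum and the description of the top eigenspace drop out of $\frac{1}{\sqrt2}(u_i,\pm v_i)$. This is a more structural/explicit way of seeing the same bipartite symmetry, at the cost of having to re-establish the simplicity of $\sigma_1$ by applying Perron--Frobenius separately to $A^TA$ (and $AA^T$), which in turn requires you to argue irreducibility of the $2$-step walk on each parity class of the ball — something the paper sidesteps entirely by invoking the simplicity of $\lambda^{{\rm RW},(1)}$ directly. You correctly flag that this irreducibility needs a word of justification; for a Euclidean ball of radius $\varrho_N$ it is routine.

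One small inconsistency worth noting (which is present in the lemma statement itself, not just in your write-up): after you correctly observe that each nonzero $\sigma_i^2$ appears with multiplicity two in $Q_{\varrho_N}^2$ (once in each parity block), the phrase ``the $k$-th largest eigenvalue of $Q_{\varrho_N}^2$ equals $\sigma_k^2$'' cannot be read literally with multiplicities — the $k$-th largest counted with multiplicity would be $\sigma_{\lceil k/2\rceil}^2$. The intended reading, which your SVD picture actually makes quite transparent and which is what the paper needs in the proof of Lemma~\ref{lem:surv-asymp}, is that the $k$-th largest \emph{distinct} eigenvalue of $Q_{\varrho_N}^2$ equals $(1-\lambda^{{\rm RW},(k)}_{\ball{0}{\varrho_N}})^2$, and the top one has a two-dimensional eigenspace spanned by $\phi^{{\rm RW},(1)}_{\ball{0}{\varrho_N}}1_{\rm e}$ and $\phi^{{\rm RW},(1)}_{\ball{0}{\varrho_N}}1_{\rm o}$.
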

\begin{proof}
For any eigenvalue $\zeta$ and corresponding eigenfunction $\phi_\zeta$ of $Q_{\varrho_N}$, we have 
\begin{equation}
Q_{\varrho_N}1_{\rm e} \phi_\zeta
=\zeta 1_{\rm o}\phi_\zeta
\end{equation}
and the same holds with $1_{\rm e}$ and $1_{\rm o}$ interchanged. It follows that 
\begin{equation}
Q_{\varrho_N}(1_{\rm e} \phi_\zeta-1_{\rm o} \phi_\zeta)
=-\zeta (1_{\rm e} \phi_\zeta-1_{\rm o} \phi_\zeta),
\end{equation}
that is, $-\zeta$ is also an eigenvalue. Since $Q_{\varrho_N}$ has $|\ball{0}{\varrho_N}|$ eigenvalues counting multiplicity, the first assertion about the eigenvalues follows. 

The second assertion about the eigenfunction is a consequence of the following two facts: $\lambda_{\ball{0}{\varrho_N}}^{{\rm RW},(1)}$ is a simple eigenvalue and $Q_{\varrho_N}^2$ leaves $\ell^2(\Z^d_{\rm e})$ and $\ell^2(\Z^d_{\rm o})$ invariant.
\end{proof}
\medskip
\noindent\textbf{Proof of Lemma~\ref{lem:surv-asymp}.}
Let us start with the case that $N$ is an even integer. In this case, $S_N\in\Z^d_{\rm e}$ and hence
\begin{equation}
\begin{split}
&\P\otimes\bP\left(\tau_\Oi>N\right)\\
&\quad\ge \P(\Oi\cap\ball{0}{\varrho_N}=\emptyset)\bP\left(\tau_{\ball{0}{\varrho_N}^c}>N, S_N\in\Z^d_{\rm e}\right)\\
&\quad =p^{|\ball{0}{\varrho_N}|}Q_{\varrho_N}^{N/2}1_{\rm e}(0).
\end{split}
\label{eq:ball-strategy2}
\end{equation}
Let us denote by $P$ the orthogonal projection onto the first eigenspace of $Q_{\varrho_N}^2$. Then 
\begin{equation}
Q_{\varrho_N}^{N/2}1_{\rm e}(0)
=\left(1-\lambda_{\ball{0}{\varrho_N}}^{{\rm RW},(1)}\right)^NP1_{\rm e}(0)
+Q_{\varrho_N}^{N/2}({\rm id}-P)1_{\rm e}(0).
\label{eq:proj}
\end{equation}
Using $1-\lambda\ge \exp\{-\lambda-\lambda^2\}$ for small $\lambda>0$, $\phi^{{\rm RW},(1)}_{\ball{0}{\varrho_N}}\ge 0$,~\eqref{eq:EFlbd} and Lemma~\ref{lem:parity}, we can bound the first term below by
\begin{equation}
\begin{split}
 &\left(1-\lambda^{{\rm RW},(1)}_{\ball{0}{\varrho_N}}\right)^{N}
 \left\langle\phi^{{\rm RW},(1)}_{\ball{0}{\varrho_N}}, 1_{\rm e}\right\rangle \phi^{{\rm RW},(1)}_{\ball{0}{\varrho_N}}(0)\\
 &\quad \ge \left(1-\lambda^{{\rm RW},(1)}_{\ball{0}{\varrho_N}}\right)^N\phi^{{\rm RW},(1)}_{\ball{0}{\varrho_N}}(0)^2\\
 &\quad \ge \exp\left\{-N\lambda^{{\rm RW},(1)}_{\ball{0}{\varrho_N}}-cN\varrho_N^{-3}\right\}.
\end{split}
\label{eq:1stlbd2}
\end{equation}
On the other hand, it also follows from Lemma~\ref{lem:parity} that the operator norm of $Q_{\varrho_N}^{N} ({\rm id}-P)$ is bounded by $(1-\lambda_{\ball{0}{\varrho_N}}^{{\rm RW},(2)})^N$. Combining this with~\eqref{eq:ev} and $1-\lambda\le \exp\{-\lambda\}$, we can bound the second term in~\eqref{eq:proj} by
\begin{equation}
\begin{split}
\left|Q_{\varrho_N}^{N}({\rm id}-P)1_{\rm e}(0)\right|
& \le \left(1-\lambda_{\ball{0}{\varrho_N}}^{{\rm RW},(2)}\right)^N\|1_{\rm e}\|_{\ell^2(\ball{0}{\varrho_N})}\\
& \le \exp\left\{-N\lambda^{{\rm RW},(1)}_{\ball{0}{\varrho_N}}-cN\varrho_N^{-2}\right\}.
\end{split}
\label{eq:2ndubd2}
\end{equation}
Since~\eqref{eq:2ndubd2} is negligible compared with~\eqref{eq:1stlbd2} for large $N$, we obtain
\begin{equation}
\P\otimes\bP\left(\tau_\Oi>N\right)
\ge \exp\left\{|\ball{0}{\varrho_N}|\log(1/p)-N\lambda^{{\rm RW},(1)}_{\ball{0}{\varrho_N}}-cN\varrho_N^{-3}\right\}.
\end{equation}
Substituting the bound $\left||\ball{0}{\varrho_N}|-\vol{\ball{0}{\varrho_N}}\right|\le c\varrho_N^{d-1}$ and~\eqref{eq:disc-cont-ball} into the above, we arrive at the desired bound. 

Finally when $N$ is an odd integer, we start with
\begin{equation}
\begin{split}
& \P(\Oi\cap\ball{0}{\varrho_N}=\emptyset)\bP\left(\tau_{\ball{0}{\varrho_N}^c}>N\right)\\
&\quad =\sum_{|y|=1}\P(\Oi\cap\ball{0}{\varrho_N}=\emptyset)\bP_y\left(\tau_{\ball{0}{\varrho_N}^c}>N-1\right).
\end{split}
\end{equation}
Then the rest of the argument is the same as before. We have the sum of $\frac1{2d}\phi^{{\rm RW},(1)}_{\ball{0}{\varrho_N}}(y)$ over $\{|y|=1\}$ instead of $\phi^{{\rm RW},(1)}_{\ball{0}{\varrho_N}}(0)$ in~\eqref{eq:1stlbd2}, but \eqref{eq:EFlbd} gives us the same lower bound. \qed

\section{On the proof of Theorem~\ref{th:confine}}
\label{app:Povel}
In this section, we briefly explain how to prove Theorem~\ref{th:confine} by adapting the argument in~\cite{P99}. Roughly speaking, it consists of the following five steps: 
\begin{enumerate}
 \item use the method of enlargement of obstacles in~\cite{S98} to define a \emph{clearing set} $\mathscr{U}_{\rm cl}$, 
 \item prove volume and eigenvalue constraints for $\mathscr{U}_{\rm cl}$, 
 \item apply a quantitative Faber--Krahn inequality to show that $\mathscr{U}_{\rm cl}$ is close to a ball with radius $\varrho_N$,
 \item  prove a sharp lower bound on the partition function in terms of a random eigenvalue, 
 \item use the fact that the region outside $\mathscr{U}_{\rm cl}$ has much larger eigenvalue to deduce that it is too costly for the random walk to get away from the ball. 
\end{enumerate}
In what follows, we explain these steps in more detail using the same parameters as in~\cite{P99} as much as possible. It turns out that it is only Step 3 that requires an extra argument in the discrete setting. 

Steps 1 and 2 can be carried out exactly as in~\cite{P99} since the method of enlargement of obstacles used in that paper has been translated to the discrete setting in~\cite{BAR00}. This method allows us to define the clearing set $\mathscr{U}_{\rm cl}$ as a union of large  boxes (lattice animal) that are almost free of obstacles (cf.~\cite[(51)--(52)]{P99}). Since $\mathscr{U}_{\rm cl}^c$ has rather high density of obstacles, we can effectively discard it when we consider the eigenvalue  (cf.~\cite[(23), (55)]{P99}): 
\begin{equation}
\left|\lambda^{{\rm RW}, (1)}_{\mathscr{U}_{\rm cl}}-\lambda^{{\rm RW}, (1)}_{\ball{0}{2N}\setminus\Oi}\right|\le \varrho_N^{-\rho}
\end{equation}
for some $\rho>0$. Combining the above two properties,  in the same way as~\cite[Proposition~1]{P99}, we can prove that for some $\alpha_1>0$, the $\mu_N$-probability of 
\begin{equation}
|\mathscr{U}_{\rm cl}|\log \tfrac{1}{p}+N\lambda^{{\rm RW},(1)}_{\mathscr{U}_{\rm cl}}\le N^{\frac{d}{d+2}}\left(c(d,p)+N^{-\frac{\alpha_1}{d+2}}\right)\label{eq:MEO_var}
\end{equation}
tends to one as $N\to \infty$.

As for Step 3, if $\mathscr{U}_{\rm cl}$ were a subset of $\R^d$, then the quantitative Faber--Krahn inequality~\cite[Theorem~A]{P99} would imply that $\mathscr{U}_{\rm cl}$ satisfying~\eqref{eq:MEO_var} must be close to a ball with radius $\varrho_N$ in the symmetric difference. But we are in the discrete setting and hence we need to find a set in $\R^d$ whose volume and (continuum) eigenvalue are almost the same as $|\mathscr{U}_{\rm cl}|$ and $\lambda^{{\rm RW},(1)}_{\mathscr{U}_{\rm cl}}$, respectively. By the results in~\cite{Weinberger1958}, the set $\mathscr{U}_{\rm cl}^+=\left\{x\in\R^d\colon {\rm dist}_{\ell^\infty}(x,\mathscr{U}_{\rm cl})\le 2\right\}$
has the eigenvalue $\lambda^{(1)}_{\mathscr{U}_{\rm cl}^+}$ almost the same as $\lambda^{{\rm RW},(1)}_{\mathscr{U}_{\rm cl}}$. Also, since $\mathscr{U}_{\rm cl}$ is a union of large boxes, the volume of $\mathscr{U}_{\rm cl}^+$ is close to $|\mathscr{U}_{\rm cl}|$. In this way, we can conclude that there exists a ball $\ball{\mathcal{x}_N}{\varrho_N}$ that almost coincides with  $\mathscr{U}_{\rm cl}$. The outside of this ball has rather high density of obstacles and hence just as in~\cite[Lemma~1]{P99}, we have
\begin{equation}
\lambda^{{\rm RW}, (1)}_{\ball{0}{2N}\setminus\ball{\mathcal{x}_N}{\varrho_N}}
\ge \varrho_N^{-2+\alpha_3}
\end{equation}
for some $\alpha_3>0$. 

Step 4 relies on a simple functional analytic argument and there is no difficulty in adapting it to the discrete setting. It corresponds to~\cite[Lemma~2]{P99} and provides the following lower bound on the partition function:
\begin{equation}
 \P\otimes\bP\left(\tau_\Oi>N\right) \ge N^{-c}\E\left[\exp\left\{-N\lambda^{{\rm RW}, (1)}_{\ball{0}{2N}\setminus\Oi}\right\}\right].
\end{equation}
See also~\cite[Lemma~2]{F08} for a slightly simplified argument. 

Finally, Step 5 roughly goes as follows. For $0\le k< l\le N$, consider the event that $S_k$ is away from the confinement ball $\ball{\mathcal{x}_N}{\varrho_N}$ and returns to it at time $l$ for the first time after $k$. In this situation, the survival probability between $k$ and $l$ decays like $\exp\{-(l-k)\lambda^{{\rm RW}, (1)}_{\ball{0}{2N}\setminus\ball{\mathcal{x}_N}{\varrho_N}}\}$ and hence using Steps~3 and~4, we obtain 
\begin{equation}
\begin{split}
&\mu_N\left(S_k\not\in\ball{\mathcal{x}_N}{\varrho_N+\varrho_N^{\epsilon_1}}, k+\tau_{\ball{\mathcal{x}_N}{\varrho_N}}\circ\theta_k=l\right)\\
&\quad \lesssim N^c \frac{\E\left[\exp\left\{-(N-l+k)\lambda^{{\rm RW}, (1)}_{\ball{0}{2N}\setminus\Oi}+(l-k)\varrho_N^{-2+\alpha_3}\right\}\right]}{\E\left[\exp\left\{-N\lambda^{{\rm RW}, (1)}_{\ball{0}{2N}\setminus\Oi}\right\}\right]}.
\end{split}
\label{eq:step5}
\end{equation}
Note that we may impose $\lambda^{{\rm RW}, (1)}_{\ball{0}{2N}\setminus\Oi}\le 2c(d,p)\varrho_N^{-2}$ both in the numerator and denominator, in view of~\eqref{eq:surv}. 
Now if $l-k\ge \varrho_N^{-2+2\alpha_3}$, then the term $(l-k)\varrho_N^{-2+\alpha_3}$ in the numerator causes a large additional cost and hence the right-hand side decays stretched exponentially in $N$. If $l-k< \varrho_N^{-2+2\alpha_3}$, then we choose $\epsilon_1>1-\alpha_3$ (so that $|S_l-S_k|\gg (l-k)^{1/2}$) and use the Gaussian heat kernel bound for the random walk, instead of the eigenvalue bound, to derive a stretched exponential decay. Summing over $k$ and $l$, we conclude that the random walk does not make a crossing from $\ball{\mathcal{x}_N}{\varrho_N+\varrho_N^{\epsilon_1}}^c$ to $\ball{\mathcal{x}_N}{\varrho_N}$. The case that the random walk does not return to $\ball{\mathcal{x}_N}{\varrho_N}$ after visiting $\ball{\mathcal{x}_N}{\varrho_N+\varrho_N^{\epsilon_1}}^c$ but this can be dealt with in a similar way, by changing $l$ to the last visit to $\ball{\mathcal{x}_N}{\varrho_N}$ before $k$. 

\section{Index of notation}
\label{app:notation}
\begin{multicols}{2}
{\renewcommand\arraystretch{1}
\begin{tabular}[t]{l@{\hspace{60pt}}l}
 $\tau_A$ &\eqref{tauO}\\
 $\mu_N$&\eqref{eq:def_mu}\\
 $c(d,p)$ &\eqref{eq:surv} \\
 $\varrho_N$ &\eqref{eq:Rn}\\
 $\mathcal{x}_N$ &Theorem~\ref{th:confine}\\
 $\ball{x}{R}$ &Theorem~\ref{th:confine}\\
 $\partial A$ &\eqref{eq:ex_boundary}\\
 $|A|, {\rm vol}(A)$& End of Section~\ref{sec:intro} \\
 $E_l^\delta$& \eqref{eq:E_l^delta} \\
 $A(x;r,R)$& \eqref{eq:6TinA} \\
 $\Ti$& Definition~\ref{D:trulyopen}
\end{tabular}
}
\columnbreak
\begin{wraptable}{l}{0pt}
{\renewcommand\arraystretch{1.2}
\begin{tabular}[t]{l@{\hspace{35pt}}l}
 $\cal{X}_l$& \eqref{X-separation} and~\eqref{X-covering}\\
 $p^D_n(u,v)$& \eqref{eq:trans_prob} \\
 $\Gamma(k)$& \eqref{eq:Gamma} \\
 $E_l^{\delta,\rho}$& \eqref{eq:Edeltarho} \\
 $\cal{X}_l^{\circ}$& \eqref{eq:calXo} \\
 $G_{\Oi}$& \eqref{eq:Green} \\
 $\Li(l$)& \eqref{eq:defL} \\
 $\lambda^{{\rm RW},(k)}_{A}$ and $\phi^{{\rm RW},(k)}_{A}$& \eqref{eq:EFexpansion} \\
 $\lambda_{A}^{(k)}$ and $\phi_{A}^{(k)}$& Below~\eqref{eq:cont-hull} 
\end{tabular}
}
\end{wraptable}

\end{multicols}
\bigskip
\noindent
{\bf Acknowledgements}
This paper is dedicated to the memory of Kazumasa Kuwada, who passed away in December 2018. He and R.~Fukushima organized RIMS camp style seminar ``New Trends in Stochastic Analysis'' in 2015, where J.~Ding and R.~Fukushima first discussed the model studied in this paper. 
J.~Ding is supported by NSF grant DMS-1757479 and an Alfred Sloan fellowship.
R.~Fukushima is supported by JSPS KAKENHI Grant Number 16K05200 and ISHIZUE 2019 of Kyoto University Research Development Program. 
R.~Sun is supported by NUS Tier 1 grant R-146-000-253-114.

\end{document}